\documentclass[
    a4paper,
    english,
    oneside]{article}

\usepackage[utf8]{inputenc}
\usepackage[T1]{fontenc}
\usepackage{expl3}
\usepackage[english]{babel}
\usepackage[a4paper,top=2.5cm,bottom=2.5cm,left=3cm,right=3cm]{geometry}

\usepackage{%
    amssymb,
    amsfonts,
    amsthm,
    mathtools,
    amsmath}

\input{./macros}

\usepackage[
    backend=biber,
    style=alphabetic,
    doi=false,
    isbn=false,
    url=true,
    date=year,
    maxbibnames=99,
    giveninits=true,
      ]{biblatex}

\AtEveryBibitem{\clearfield{series}}
\AtEveryBibitem{\clearfield{labeladdress}}

\definecolor{LinkBlue}{cmyk}{1,0.50,0,0}
\usepackage[
    hidelinks,
    colorlinks=true,
    unicode,
    linkcolor=LinkBlue,
    citecolor=LinkBlue,
    urlcolor=LinkBlue]{hyperref}

\title{From Nilspace Theory to Simplicial Homotopy Theory}
\author{Noa Bihlmaier\thanks{
    \textsc{Mathematisches Institut, Universität Bonn, Endenicher Allee 60, 53115 Bonn, Germany} \\
    \indent\textit{Email address:} \href{mailto:nobi@math.uni-bonn.de}{\texttt{nobi@math.uni-bonn.de}}}
        \and
        Nick Ruoff\thanks{
    \textsc{Mathematisches Institut, Universität Bonn, Endenicher Allee 60, 53115 Bonn, Germany} \\
    \indent\textit{Email address:} \href{mailto:ruoff@math.uni-bonn.de}{\texttt{ruoff@math.uni-bonn.de}}}}

\begin{document}
\maketitle
\thispagestyle{empty}
\begin{abstract}
We construct a faithful functor from the category of cubespaces, arising in structured ergodic theory and nilspace theory, to the category of (condensed) simplicial sets.
This functor translates the weak structure theorem of a nilspace
to a Postnikov tower of the corresponding (condensed) simplicial set.
In particular, nilspaces are mapped to Kan complexes and the structure groups of the nilspace
correspond to the simplicial homotopy groups of the Kan complex.
This allows us to translate topological and nilspace-theoretical Host-Kra theory to (simplicial) homotopy theory.

The functor is constructed via pullback along a special cosimplicial cubical set,
whose combinatorial properties may be of independent interest.
\end{abstract}

\section{Introduction}

Even though it is a well-known fact among nilspace theorists that there are obvious similarities between the theory of nilspaces and simplicial homotopy theory, no precise connection has been established yet \cite{Manners2023}.
The purpose of the present text is to establish such a connection and make it a precise mathematical statement.
Our main result is the following.
\begin{theorem*}
    There exists a limit preserving functor $K^\ast\colon\cCond\to\CondAni\otimes\sSet$ from condensed cubical sets to condensed simplicial sets which is faithful on the full subcategory of concrete cubespaces,
such that if $X$ is a concrete fibrant cubespace, the following holds.
\begin{enumerate}[(i)]
	\item The condensed simplicial set $K^\ast X$ is a Kan complex.
    \item The condensed structure groups $\pi_n(X,x_0)$ agree with the condensed homotopy groups $\pi_n(K^\ast X,x_0)$.
\end{enumerate}
\end{theorem*}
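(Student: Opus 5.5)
The plan is to define $K^\ast$ as restriction along a cosimplicial cubical set $K\colon\Delta\to\cSet$. In other words, $(K^\ast X)_n=\operatorname{Hom}(K[n],X)$, computed levelwise in the condensed setting. The first task is to choose $K[n]$ well. I would take $K[n]$ to be the cubical subset of $\square^n$ generated by the faces corresponding to the monotone paths, i.e.\ the simplices of the standard triangulation of the $n$-cube, arranged so that the cosimplicial structure maps are face and degeneracy maps of cubes.

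Limit preservation is then formal. Since $K^\ast$ is a restriction functor, it is a right adjoint, with left adjoint the left Kan extension along $K$, so it commutes with all limits; condensification does not affect this. For faithfulness on concrete cubespaces, I would use that $K[0]$ is the point and that a morphism of concrete cubespaces is determined by its underlying map of points. That map is read off in degree $0$ of $K^\ast$.

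For (i), I would show that each horn inclusion $\Lambda^n_k\subset\Delta^n$ is sent by the left adjoint $K_!$ to a map $K_!\Lambda^n_k\to K[n]$ having the left lifting property against every fibrant cubespace. The combinatorial heart, which I expect to be the main obstacle, is the following claim: \emph{$K_!\Lambda^n_k\to K[n]$ is a finite composite of pushouts of cube-corner completions $\sqcup^m\to\square^m$.} Fibrant cubespaces admit unique completions of corners only in degrees above their step and existence in general, and the claim would reduce the Kan condition to exactly this corner-completion property. To establish the claim I would filter $K[n]$ by adding the cubes corresponding to monotone paths one at a time, in a shelling order compatible with the missing face $k$. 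I would then check that at each stage the intersection with what has already been added is a corner. I would try the lexicographic order on permutations first. Condensed fibrancy should follow from the same argument applied objectwise in profinite sets, provided the lifts can be chosen continuously; this uses the compactness hypotheses built into the condensed fibrancy definition.

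For (ii), I would compare definitions directly. An element of $\pi_n(K^\ast X,x_0)$ is represented by an $n$-simplex whose boundary is degenerate at $x_0$. Under the triangulation this is a configuration of cubes collapsing to a single $n$-cube whose faces avoiding one vertex are constant $x_0$. This is exactly the data defining an element of the structure group $\pi_n(X,x_0)$, namely the fibre of $\pi_{n-1}$-factors over $x_0$ realised via rooted cubes. Homotopy of simplices should correspond to the equivalence relation defining the structure group; to verify this I would again use corner completion and uniqueness in degree $n+1$. The group operations should match because simplicial composition by horn-filling uses the same corner completions as the concatenation defining the structure group law. Functoriality in condensed sets would then give the condensed isomorphism.
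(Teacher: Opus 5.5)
There is a genuine gap, and it is in your choice of the kernel $K$. The formal parts are fine and match the paper: right adjointness of restriction along $K$, and faithfulness via $K[0]=\ast$.

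All cubesets involved are concrete. So for concrete $X$, a map $K[n]\to X$ is just a map of points $K[n](0)\to X(0)$ sending cubes to cubes. Hence an $n$-simplex of $K^\ast X$ with boundary constant at $x_0$ is determined by its values at the points of $K[n]$ lying in the image of \emph{no} coface map. If there are no such points, $\pi_n(K^\ast X,x_0)$ is trivial.

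With your $K[n]\subseteq\square^n$, whose points are cube vertices serving as vertices of simplices of the triangulation, this already happens for $n=1$. Here $K[1]=\square^1$ has only the two endpoints, which are the images of $d^0$ and $d^1$. So the only loop at $x_0$ is the constant one, and $\pi_1(K^\ast X,x_0)=0$ for every concrete $X$. But an abelian group $A$ with its $1$-step (affine) cube structure has first structure group $A$.

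Your step (ii) identifies a based sphere with ``a single $n$-cube whose faces avoiding one vertex are constant $x_0$''. This is inconsistent with your own model: the free vertex $(1,\ldots,1)$ is a vertex of every simplex of the triangulation and lies on its boundary. This is exactly the obstruction the paper emphasizes. For concrete cubesets the corner has to play the role of both horn and boundary, so the boundary of each $K_n$ must miss a point.

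The paper's remedy is to take $K_n$ concrete with underlying set $\su[n]$, the nonempty subsets of $\{0,\ldots,n\}$. The coface maps $\delta_{i,\ast}$ hit exactly the subsets missing $i$, so the full label $[n]$ is the unique interior point. The cubes of $K_n$ are generated by the degenerable $n$-cubes, so that degeneracies exist. By Proposition~\ref{prop:concrete-simplices}, an $n$-simplex is then a map $f\colon\su[n]\to X(0)$ such that all $f\circ i_\ast C_m$ and $f\circ i_\ast D_m$ are cubes of $X$.

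In this setup $C_n$ avoids the full label and encodes the boundary. $D_n$ contains the full label exactly once, at $(1,\ldots,1)$. Consequently a sphere at $x_0$ is literally an $n$-cube with corner $x_0$, and the conditions coming from $C_{n+1}$ and $D_{n+1}$ reproduce the structure-group relation and product.

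The Kan property is not obtained from a shelling or cell decomposition of $K_!\Lambda^n_i\to K_n$. It comes from a direct filling argument using the gluing property, in these steps:
\begin{enumerate}
\item Glue $C^i_{n-1}$ from the other $C^j_{n-1}$.
\item Complete a provisional $D^i_{n-1}$.
\item Obtain a corner of $C_n$ via Theorem~\ref{thm:faces_of_Cn_generated} and complete it; this fixes the value at $\delta_i$.
\item Recover the final $D^i_{n-1}$ from a face of $C_n$ together with backwards gluing.
\item Complete $D_n$ via Corollary~\ref{rem:D-faces}.
\end{enumerate}
The condensed version then follows because $K^\ast$ is computed pointwise on extremally disconnected sets. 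No continuous choice of lifts is needed.
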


\subsection{A short outline of the construction}
Using the usual $(\infty)$-categorical machinery applied to condensed sets,
the condensed/topological statement reduces easily to the following discrete version, see Section \ref{sec:enrich}.

\begin{repeatresult}{theorem}{thm:main}
There exists a limit preserving functor $K^\ast\colon\cSet\to\sSet$ which is faithful on the full subcategory of concrete cubesets, such that if $X$ is a concrete fibrant cubeset, the following holds.
\begin{enumerate}[(i)]
	\item The simplicial set $K^\ast X$ is a Kan complex.
    \item The structure groups $\pi_n(X,x_0)$ agree with the homotopy groups $\pi_n(K^\ast X,x_0)$.
\end{enumerate}
\end{repeatresult}

The functor $K^*$ is constructed via pullback with an explicit cosimplicial cubical set $K$.
To define $K$, we need to translate the combinatorial properties of the cube category $\bbox$ and of the simplex category $\bbDelta$.
For this, we introduce the concept of \emph{formal cubes} which make it possible to abstractly manipulate the cubes representing $n$-simplices.

\begin{repeatresult}{definition}{def:formal-cubes}
A \emph{formal $n$-dimensional cube} with labels in $[m]$ is a function $X\colon\{0,1\}^n\to\su[m]$.
\end{repeatresult}

We will usually describe a formal cube in terms of its \emph{label functions}
$f_i\colon \{0,1\}^n\to\{0,1\}$ for $0\le i\le m$ which are given by
\[
    i\in X(x)\quad \iff \quad f_i(x)=1.
\]
Among those formal cubes the most important class is the one we call \emph{degenerable}, allowing us to translate the degeneracies of the simplicial set to the degeneracies of the cubical set.

\begin{repeatresult}{definition}{def:formal-degenerable}
A formal $n$-cube $X$ with labels $[n]$ is called \emph{degenerable}
if for every $0\le i<n$, identifying labels $i$ and $i+1$
yields a cube which is degenerate along the $i$-th coordinate direction.
\end{repeatresult}

In terms of label functions this means that for every $0\le i<n$, the pointwise supremum
$f_i\vee f_{i+1}$ does not depend on $x_i$.
This allows us to fully classify degenerable cubes, see Section \ref{ssec:classification}.
With these definitions we have everything at hand to define the kernel $K$.

\begin{repeatresult}{definition}{def:kernel}
Define the functor $K\colon{\bbox}^\op\times\bbDelta\to\set$ 
as the subfunctor of the functor
\begin{center}
\begin{tikzcd}
	{{\bbox}^\op\times\bbDelta} & {\set^\op\times\set} & \set
	\arrow["{?\times\su}", from=1-1, to=1-2]
	\arrow["\hom", from=1-2, to=1-3]
\end{tikzcd}
\end{center}
generated by the assignment
\[
(\langle m\rangle,[n])\mapsto
\begin{cases}
    \{A\colon\{0,1\}^n\to\su[n]\mid A\;\text{degenerable}\},\quad &\text{if } m = n, \\
    \emptyset,\quad &\text{if } m\neq n.
\end{cases}
\]
\end{repeatresult}

Pullback along $K$ yields the functor from cubesets to simplicial sets.

\begin{repeatresult}{definition}{def:functor}
Define the functor $K^\ast\colon\cSet\to\sSet$ by
\[
    \cSet\ni X\mapsto K^\ast X \coloneqq \hom_{\cSet}(K_{(-)}, X)\in\sSet.
\]
\end{repeatresult}

In order to prove the main theorem, one step is to explicitly describe the value of the functor on $X$.
To do so, the necessary combinatorics can be understood via
our main technique of gluing formal cubes along common faces
to obtain a new formal cube that has the other two faces as its faces.

Combinatorially, we need to translate the simplicial horn filling property to the cubical corner completion,
and at the same time translate the equivalence relation on spheres of the simplicial homotopy groups to the cubical equivalence relation for the structure groups, using corner completions again.
The double usage of the cubical corner as horn and as boundary, see Section \ref{ssec:differences},
forces us to have two prominent $n$-cubes in $K$: One, $D_n$, for the \enquote{spatial direction/horns}, and another, $C_n$, for the \enquote{homotopical direction/boundary}.
However, since in the cubical theory all information is equally stored on the level of points, these cubes have to interfere appropriately.

The correct choice turns out to be the following, having the neat universal property that $C_n$ is (up to flips) the unique cube such that every $n$-tuple
appears exactly once and the full label doesn't appear, see Proposition \ref{prop:unique-Cn}.
\begin{repeatresult}{example}{ex:Cn-Dn}
Define formal cubes $C_n$ by the label functions
\[
    f_i\colon\{0,1\}^n\to\{0,1\},\quad x\mapsto x_i\vee\neg x_{i-1}
\]
with the convention that $x_{-1}=1$ and $x_n=0$.

Define formal cubes $D_n$ with label functions given by the ones of $C_n$ for $i<n$ and
$f_n\equiv 1$.
\end{repeatresult}

Now, these cubes $C_n$ and $D_n$ behave precisely as one wants them to.

\begin{repeatresult}{theorem}{thm:C-D-glue-all-really}
Every degenerable $n$-cube is generated, under gluing and cube morphisms,
from the cubes $i_\ast C_k$ and $i_\ast D_k$ where $i\colon[k]\to[n]$ runs over all injections in $\bbDelta$.
\end{repeatresult}
This allows us to describe the value of $K^{*}X$ purely in terms of certain $C_n$ and $D_n$ configurations, thereby reducing to the study of $C_n$ and $D_n$.
\begin{repeatresult}{proposition}{prop:concrete-simplices}
Let $X$ be a concrete cubeset with gluing property.
Then $K^\ast X_n$ consists of all maps $f\colon\su[n]\to X(0)$ such that for every injection
$i\colon[m]\to [n]$ in $\bbDelta$ the $m$-cubes
\begin{align*}
    f\circ i_\ast\circ C_m&\colon\{0,1\}^m\to\su[m]\to\su[n]\to X(0) \\
    f\circ i_\ast\circ D_m&\colon\{0,1\}^m\to\su[m]\to\su[n]\to X(0)
\end{align*}
are in $X(m)$.
\end{repeatresult}
Having the description of the value of the functor in terms of the $C_n$ and $D_n$ cubes,
we can proceed to show that they translate the desired structures.
For this, it is crucial to check that for fibrant $X$, every ($n+1$)-horn of $K^{*}X$ admits a filler.
To do so, we show that given the corresponding $n$-simplices appearing in the horn,
they can be assembled to the corner of the cube $C_{n+1}$.
Corner completion in $X$ to $C_{n+1}$ tells us that the simplicial boundary exists in $K^{*}X$.
Using the simplicial boundary to construct the corner of the cube $D_{n+1}$ we now obtain the actual filler of the simplicial horn.
The following theorem is the corresponding central assertion on the side of formal cubes, which we prove in Section \ref{ssec:generating_faces}.

\begin{repeatresult}{theorem}{thm:faces_of_Cn_generated}
The faces of $C_{n+1}$ are generated under gluing by $C_n^i$ and $D_n^i$ for $i\le n+1$.
\end{repeatresult}

\subsubsection*{The simplicial set associated to a concrete abelian cubegroup}

As an explicit example, we compute the simplicial set $K^*\HK(A_\bullet)$ associated to a concrete abelian cubegroup.

\begin{proposition*}
Consider the Host-Kra cubegroup $\HK(A_\bullet)$ associated to a filtered abelian group $A_\bullet$.
Then the simplicial abelian group $K^*\HK(A_\bullet)$ is isomorphic,
under Dold-Kan correspondence, to the connective chain complex
\begin{center}
\begin{tikzcd}
	\cdots & {A_3^2} && {A_2^2} && {A_1^2} && {A_0}
	\arrow[from=1-1, to=1-2]
	\arrow["{(a,b)\mapsto(b,0)}", from=1-2, to=1-4]
	\arrow["{(a,b)\mapsto (b,0)}", from=1-4, to=1-6]
	\arrow["{(a,b)\mapsto b}", from=1-6, to=1-8]
\end{tikzcd}.
\end{center}
\end{proposition*}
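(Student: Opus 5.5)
We have to compute $K^\ast\HK(A_\bullet)$ explicitly and then identify its normalized chain complex (Moore complex) under Dold--Kan.

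\textbf{Step 1: $n$-simplices as maps on subsets.} Since $\HK(A_\bullet)$ is a concrete cubeset with the gluing property, Proposition \ref{prop:concrete-simplices} applies. It says $K^\ast\HK(A_\bullet)_n$ is the set of maps $f\colon\su[n]\to A$ such that for every injection $i\colon[m]\to[n]$ both $f\circ i_\ast\circ C_m$ and $f\circ i_\ast\circ D_m$ lie in $\HK^m(A_\bullet)$. These conditions are additive, so this set is an abelian group under pointwise addition. The face and degeneracy maps are induced by pushforward and pullback of subsets, so they are group homomorphisms, and we get a simplicial abelian group. Dold--Kan then applies, and it suffices to compute the normalized complex $N_n=\bigcap_{j\ge1}\ker d_j$ with differential $d_0$.

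\textbf{Step 2: the Host--Kra conditions as Möbius coefficients.} Recall that a map $g\colon\{0,1\}^m\to A$ lies in $\HK^m(A_\bullet)$ iff it can be written as $\sum_F a_F 1_F$ over upper faces $F$ of codimension $k$ with $a_F\in A_k$. Equivalently, each Möbius (discrete-derivative) coefficient of $g$ at a vertex of weight $k$ lies in $A_k$.

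I would expand $f$ in the Möbius basis on $\su[n]$, namely $f(S)=\sum_{T\subseteq S}\hat f(T)$, and compute the composites with $C_m$ and $D_m$ using the label functions $x_i\vee\neg x_{i-1}$. The expected outcome is the following.
\begin{itemize}
\item $\hat f(T)$ is constrained by membership in $A_{|T|}$ or $A_{|T|-1}$, depending on whether $T$ contains the top element of $[n]$.
\item For the relevant consecutive configurations, the $D$-cubes force $\hat f(T)$ to vanish except for a small number of $T$.
\end{itemize}

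\textbf{Step 3: the normalized complex.} Working inside $N_n$, the conditions $d_jf=0$ for $j\ge1$ kill all coefficients $\hat f(T)$ with $T$ not containing $\{1,\dots,n\}$. This leaves two coordinates:
\[
a=\hat f(\{1,\dots,n\}),\qquad b=\hat f([n]).
\]
The Host--Kra conditions from Step 2 should put $a\in A_n$ and $b\in A_n$, up to the index bookkeeping, which must match $A_n^2$ in degree $n$ and $A_0$ in degree $0$.

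Then compute $d_0$, which pulls back along $\{1,\dots,n\}\to[n]$ relabelled. It sends $(a,b)$ to a normalized $(n-1)$-simplex whose new "$a$"-coordinate is $b$ and whose "$b$"-coordinate is $0$. This gives the differential $(a,b)\mapsto(b,0)$ in degrees $n\ge2$, and $(a,b)\mapsto b$ in degree $1$.

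As a consistency check, the homology of the resulting complex is $A_n/A_{n+1}$ in degree $n$. This agrees with Theorem \ref{thm:main}(ii), since the structure groups of $\HK(A_\bullet)$ are the successive quotients of the filtration.

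\textbf{Main obstacle.} The main obstacle is Step 2: computing exactly which Möbius-coefficient conditions the $C_m$ and $D_m$ cubes impose, without off-by-one errors in the filtration indices. I would handle this by first working out the cases $n=1,2$ by hand, then doing an induction using Theorem \ref{thm:faces_of_Cn_generated} to reduce the face conditions to lower-dimensional ones.
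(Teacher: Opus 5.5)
Your overall strategy is the paper's: use Proposition~\ref{prop:concrete-simplices} to describe simplices as maps $f$ on nonempty subsets, then compute a normalized Moore complex. Using $\bigcap_{j\ge1}\ker d_j$ with $d_0$ instead of the paper's $\bigcap_{j\le n-1}\ker d_j$ with $d_n$ is harmless. There are, however, two genuine problems.

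\textbf{The chain groups are never actually identified.} Step~3 rests on Step~2, and the structure you predict there is false: the Host--Kra conditions do not decouple in the M\"obius basis. Already for $n=1$, the $C_1$ condition reads $\hat f(\{0\})-\hat f(\{1\})\in A_1$ and the $D_1$ condition reads $\hat f(\{0\})+\hat f(\{0,1\})\in A_1$. The degenerate simplex $f\equiv g$ with $g\in A_0\setminus A_1$ has $\hat f(\{0,1\})=-g\notin A_1$, contradicting your first bullet.

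Neither the M\"obius detour nor an induction via Theorem~\ref{thm:faces_of_Cn_generated} is needed. Impose $d_jf=0$ first, so that $f$ is supported on $\{1,\dots,n\}$ and $[n]$, and read the conditions directly off the values.
\begin{itemize}
\item $C_n$ contains every $n$-subset exactly once and never the full label. So $f\circ C_n$ is $f(\{1,\dots,n\})$ at the vertex $0$ and $0$ elsewhere, and it is an $n$-cube of $\HK(A_\bullet)$ iff $f(\{1,\dots,n\})\in A_n$.
\item In $D_n$ the full label occurs only at $(1,\dots,1)$. The label $\{1,\dots,n\}$ occurs at the vertices $0^k1^{n-k}$, and once $f(\{1,\dots,n\})\in A_n$ those vertices contribute a Host--Kra cube. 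This leaves exactly $f([n])\in A_n$.
\item The only lower-dimensional configuration that sees the support is $\delta_{0,\ast}D_{n-1}$. It gives the weaker condition $f(\{1,\dots,n\})\in A_{n-1}$.
\end{itemize}
The paper's normalization is even cleaner. There $f$ is supported on $\{0,\dots,n-1\}$ and $[n]$, and since every label of $D_n$contains $n$, both $f\circ C_n$ and $f\circ D_n$ are single-vertex cubes.

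\textbf{Your differential is wrong in your own coordinates.} Here $a=\hat f(\{1,\dots,n\})$ and $b=\hat f([n])$. The face $d_0f=f\circ\delta_{0,\ast}$ is supported only on the full subset of $[n-1]$, with value $f(\{1,\dots,n\})=a$. Its new $a$-coordinate is therefore $0$ and its new $b$-coordinate is $a$. So $d_0(a,b)=(0,a)$ for $n\ge2$ and $d_0(a,b)=a$ for $n=1$, not $(b,0)$ and $b$ as you assert. The resulting complex is isomorphic to the stated one after swapping the two coordinates in every degree. Hence the conclusion, and your homology check $A_n/A_{n+1}$, survive, but the formula you wrote down does not hold.
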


\begin{proof}
We have to show that the described complex is the reduced Moore complex of $K^*\HK(A_\bullet)$.
But $\bigcap_{i\le n-1}\ker d_i$ consists of precisely those $x\colon \su[n] \to A_0$ in $K^*\HK(A_\bullet)$ whose values are $0$ everywhere except possibly on
$[n]$ and $[n-1]$.
For such elements, the condition on the $C_n$ cube being in $\HK(A_\bullet)$ reduces to the $[n-1]$-coordinate being in $A_n$,
and the $D_n$ cube precisely yields that the $[n]$-coordinate is in $A_n$.
Thus the chain groups are $A_n^2$.
The differential $d_n$ precisely singles out the $[n-1]$-coordinate, resulting in the desired Moore complex.
\end{proof}

\begin{corollary*}
The simplicial abelian group $K^\ast\cD_n(A)$ is isomorphic to
\[
    \bigoplus_{k=0}^{n-1}E(A,k)\oplus K(A,n),
\]
    where $K(A,n)$ is the $n$-th Eilenberg-Mac Lane space of $A$
and $E(A,k)$ is contractible.
\end{corollary*}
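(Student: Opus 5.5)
We derive the corollary from the preceding proposition by choosing a suitable filtration and then splitting the resulting chain complex by Dold--Kan. The nilspace $\cD_n(A)$ is the Host--Kra cubegroup $\HK(A_\bullet)$ of the filtration with $A_0=\dots=A_n=A$ and $A_k=0$ for $k>n$. We identify the cubes directly: a map $\{0,1\}^m\to A$ is a cube of $\cD_n(A)$ iff it is annihilated by all $(n+1)$-dimensional alternating face sums, which is exactly the Host--Kra condition for this filtration. So $K^\ast\cD_n(A)\cong K^\ast\HK(A_\bullet)$ as simplicial abelian groups. The preceding proposition then says that, under Dold--Kan, it corresponds to the complex
\[
0\to A^2 \to A^2\to\cdots\to A^2\to A,
\]
with $A^2$ in degrees $1,\dots,n$, zero above degree $n$, and $A$ in degree $0$. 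The differentials are $(a,b)\mapsto(b,0)$ in degrees $\ge 2$ and $(a,b)\mapsto b$ in degree $1$.

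Next we decompose this chain complex as a direct sum of elementary complexes. Write degree $k$ as $A e_k\oplus A e'_k$, where $d(e_k)=0$ and $d(e'_k)=e_{k-1}$ for $2\le k\le n$; in degree $1$, $d(e'_1)$ is the generator of degree $0$, which we call $e_0$. The pairs $\{e'_{k+1},e_k\}$ for $k=0,\dots,n-1$ each span a complex $A\xrightarrow{\mathrm{id}}A$ concentrated in degrees $k+1,k$. The remaining summand $A e_n$ in degree $n$ is a single copy of $A$ in degree $n$. Its incoming differential is zero because the complex vanishes in degree $n+1$. Its outgoing differential is zero because $d(e_n)=0$; this holds for $n\ge 1$, and for $n=0$ there is nothing to check.

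We then check that these summands exhaust the complex with no overlap. In degree $k$ with $1\le k\le n-1$, the summands are $e_k$ (from the pair $\{e'_{k+1},e_k\}$) and $e'_k$ (from the pair $\{e'_k,e_{k-1}\}$). In degree $n$ they are $e'_n$ (from the pair $\{e'_n,e_{n-1}\}$) and the lone $e_n$. In degree $0$ it is $e_0$. So the complex is exactly the direct sum of these $n+1$ subcomplexes.

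Dold--Kan is an equivalence of additive categories, so it preserves finite direct sums. The complex with $A$ in degree $n$ corresponds to $K(A,n)$ by definition. Denote by $E(A,k)$ the simplicial abelian group corresponding to $A\xrightarrow{\mathrm{id}}A$ in degrees $k+1,k$. This complex is acyclic, so all homotopy groups of $E(A,k)$ vanish. A simplicial group is a Kan complex, so by Whitehead $E(A,k)$ is contractible.

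The step most likely to need care is the identification $\cD_n(A)=\HK(A_\bullet)$ with the correct filtration and degree convention. The top degree must be $n$ (so $A_n=A$ and $A_{n+1}=0$), since an off-by-one error would produce $K(A,n\pm1)$ instead of $K(A,n)$.
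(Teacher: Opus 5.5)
Your proposal is correct and takes the same route the paper intends. The paper gives no separate proof and treats the corollary as an immediate consequence of the preceding proposition, applied to the filtration $A_0=\dots=A_n=A$, $A_{n+1}=0$. The resulting Moore complex splits, exactly as you describe, into $n$ acyclic two-term pieces $A\xrightarrow{\mathrm{id}}A$ in degrees $k+1,k$, which give the contractible $E(A,k)$, and a copy of $A$ in degree $n$, which gives $K(A,n)$.
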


We refer to the upcoming work \cite{Bihlmaier2026b} for a detailed treatment, especially in the non-commutative case,
exhibiting deep connections to (non-abelian) Dold-Kan correspondence.

\subsection{Nilspace Theory vs. Simplicial Homotopy Theory}\label{ssec:differences}

Let us briefly explain some similarities between the theory of nilspaces and (simplicial) homotopy theory which have motivated our results.
Nilspace theory is a structure theory of cubical sets, being central to ergodic structure theory and higher order Fourier analysis, and we refer to \cite{Bihlmaier2026} as well as \cite{Candela2017,Candela2017a,Gutman2020} for detailed introductions.
Simplicial sets and their homotopy theory in turn are central in large parts of modern algebra, as, e.g. in algebraic topology, higher category theory, and homological algebra.
We refer to  \cite{Weibel1994,Goerss1999,Lurie2009,Kerodon2026}.

The similarity of nilspace theory with (a cubical variant of) simplicial homotopy theory starts with the very definition of cubesets:
Cubesets are built out of various cubes of different dimensions,
whilst simplicial sets are glued from multiple simplices.
This datum is encoded in a presheaf, telling us how many $n$-dimensional shapes are attached in which way to other $m$-dimensional shapes.
\begin{definition}\leavevmode
    \begin{enumerate}
        \item \begin{enumerate}[(i)]
                  \item
The \emph{category of cubes $\bbox$} is the category with objects ${\bbox}^n$ for $n\in\N_0$
and morphisms $f\colon{\bbox}^n\to{\bbox}^m$ given by affine linear maps $\Z^n\to\Z^m$ that map $\{0,1\}^n$ into $\{0,1\}^m$.

                  \item
The category $\cSet$ of cubesets is the presheaf category $\PSh(\bbox)$.

\end{enumerate}
        \item \begin{enumerate}[(i)]
                  \item
The \emph{simplex category $\bbDelta$} is the category of non-empty finite ordinals and non-decreasing maps.
                  \item
The category $\sSet$ of simplicial sets
is the presheaf category $\PSh(\bbDelta)$.
\end{enumerate}
    \end{enumerate}
\end{definition}

The low-dimensional shapes can be pictured as follows.

\pgfdeclarelayer{combinedsymfacefill}
\pgfdeclarelayer{combinedsymfaceoutline}
\pgfsetlayers{combinedsymfacefill,combinedsymfaceoutline,main}

\newcommand{\CombinedSymFace}[1]{%
  \begin{pgfonlayer}{combinedsymfacefill}
    \path[fill=gray, fill opacity=0.10] #1 -- cycle;
  \end{pgfonlayer}
  \begin{pgfonlayer}{combinedsymfaceoutline}
    \path[draw=gray!60,line width=0.45pt] #1 -- cycle;
  \end{pgfonlayer}
}

\newcommand{\CombinedSymDiagFace}[1]{%
  \begin{pgfonlayer}{combinedsymfacefill}
    \path[fill=gray, fill opacity=0.16] #1 -- cycle;
  \end{pgfonlayer}
  \begin{pgfonlayer}{combinedsymfaceoutline}
    \path[draw=gray!70,line width=0.5pt] #1 -- cycle;
  \end{pgfonlayer}
}

\begin{center}
\begin{tikzpicture}[
    x=1cm,y=1cm,
    font=\small,
    line cap=round,
    line join=round,
    vertex/.style={circle,fill,inner sep=1.4pt},
    mainedge/.style={draw=black, line width=0.8pt},
    affdiag/.style={draw=black, line width=0.65pt},
    simpedge/.style={
        draw=black,
        line width=0.75pt,
        -{Stealth[length=1.6mm,width=1.1mm]},
        shorten <= 8.0pt,
        shorten >= 8.0pt
    },
    collab/.style={font=\normalsize},
    scale=0.47
]

\node[collab] at ( 1.10, 4.0) {$n=1$};
\node[collab] at ( 5.65, 4.0) {$n=2$};
\node[collab] at (10.95, 4.0) {$n=3$};

\begin{scope}[shift={(0,1.22)}]
    \node at (0,0) {$0$};
    \node at (2.2,0) {$1$};
    \draw[simpedge] (0,0) -- (2.2,0);
\end{scope}

\begin{scope}[shift={(4.4,0.35)}]
    \coordinate (sA) at (0,0);
    \coordinate (sB) at (2.5,0);
    \coordinate (sC) at (1.25,1.75);

    \draw[simpedge] (sA)--(sB);
    \draw[simpedge] (sA)--(sC);
    \draw[simpedge] (sB)--(sC);

    \node at (sA) {$0$};
    \node at (sB) {$1$};
    \node at (sC) {$2$};
\end{scope}

\begin{scope}[shift={(9.6,0)}]
    \coordinate (tA) at (0,0);
    \coordinate (tB) at (2.7,0);
    \coordinate (tC) at (1.55,0.7);
    \coordinate (tD) at (1.35,2.45);

    \draw[simpedge] (tA)--(tB);
    \draw[simpedge] (tA)--(tC);
    \draw[simpedge] (tB)--(tC);
    \draw[simpedge] (tA)--(tD);
    \draw[simpedge] (tB)--(tD);
    \draw[simpedge] (tC)--(tD);

    \node at (tA) {$0$};
    \node at (tB) {$1$};
    \node at (tC) {$2$};
    \node at (tD) {$3$};
\end{scope}

\draw[gray!55,line width=0.5pt] (14.0,-0.35) -- (14.0,4.25);

\node[collab] at (16.60,4.0) {$n=1$};
\node[collab] at (21.75,4.0) {$n=2$};
\node[collab] at (27.00,4.0) {$n=3$};

\begin{scope}[shift={(15.5,1.25)}]
    \coordinate (cAone) at (0,0);
    \coordinate (cBone) at (2.2,0);

    \draw[mainedge] (cAone)--(cBone);
    \node[vertex] at (cAone) {};
    \node[vertex] at (cBone) {};
\end{scope}

\begin{scope}[shift={(20.5,0)}]
    \coordinate (cAtwo) at (0,0);
    \coordinate (cBtwo) at (2.5,0);
    \coordinate (cCtwo) at (0,2.5);
    \coordinate (cDtwo) at (2.5,2.5);

    \CombinedSymFace{(cAtwo)--(cBtwo)--(cDtwo)--(cCtwo)}

    \draw[mainedge] (cAtwo)--(cBtwo);
    \draw[mainedge] (cBtwo)--(cDtwo);
    \draw[mainedge] (cDtwo)--(cCtwo);
    \draw[mainedge] (cCtwo)--(cAtwo);

    \draw[affdiag] (cAtwo)--(cDtwo);
    \draw[affdiag] (cBtwo)--(cCtwo);

    \foreach \P in {cAtwo,cBtwo,cCtwo,cDtwo} {
        \node[vertex] at (\P) {};
    }
\end{scope}

\begin{scope}[shift={(25.25,-0.35)}]
    \coordinate (cA) at (0,0);
    \coordinate (cB) at (2.4,0);
    \coordinate (cC) at (0,2.4);
    \coordinate (cD) at (2.4,2.4);

    \coordinate (cE) at (1.1,0.8);
    \coordinate (cF) at (3.5,0.8);
    \coordinate (cG) at (1.1,3.2);
    \coordinate (cH) at (3.5,3.2);

    \CombinedSymFace{(cA)--(cB)--(cD)--(cC)}
    \CombinedSymFace{(cE)--(cF)--(cH)--(cG)}
    \CombinedSymFace{(cA)--(cB)--(cF)--(cE)}
    \CombinedSymFace{(cC)--(cD)--(cH)--(cG)}
    \CombinedSymFace{(cA)--(cC)--(cG)--(cE)}
    \CombinedSymFace{(cB)--(cD)--(cH)--(cF)}

    \CombinedSymDiagFace{(cA)--(cB)--(cH)--(cG)}
    \CombinedSymDiagFace{(cA)--(cC)--(cH)--(cF)}
    \CombinedSymDiagFace{(cA)--(cE)--(cH)--(cD)}

    \draw[mainedge] (cA)--(cB);
    \draw[mainedge] (cB)--(cD);
    \draw[mainedge] (cD)--(cC);
    \draw[mainedge] (cC)--(cA);

    \draw[mainedge] (cE)--(cF);
    \draw[mainedge] (cF)--(cH);
    \draw[mainedge] (cH)--(cG);
    \draw[mainedge] (cG)--(cE);

    \draw[mainedge] (cA)--(cE);
    \draw[mainedge] (cB)--(cF);
    \draw[mainedge] (cC)--(cG);
    \draw[mainedge] (cD)--(cH);

    \draw[affdiag] (cA)--(cD);
    \draw[affdiag] (cB)--(cC);
    \draw[affdiag] (cE)--(cH);
    \draw[affdiag] (cF)--(cG);
    \draw[affdiag] (cA)--(cF);
    \draw[affdiag] (cB)--(cE);
    \draw[affdiag] (cC)--(cH);
    \draw[affdiag] (cD)--(cG);
    \draw[affdiag] (cA)--(cG);
    \draw[affdiag] (cC)--(cE);
    \draw[affdiag] (cB)--(cH);
    \draw[affdiag] (cD)--(cF);

    \draw[affdiag] (cA)--(cH);
    \draw[affdiag] (cB)--(cG);
    \draw[affdiag] (cC)--(cF);
    \draw[affdiag] (cD)--(cE);

    \foreach \P in {cA,cB,cC,cD,cE,cF,cG,cH} {
        \node[vertex] at (\P) {};
    }
\end{scope}

\end{tikzpicture}
\end{center}

Now, in every dimension $n>0$, there is a \emph{horn} resp. \emph{corner},
which is supposed to be an $n$-dimensional shape which roughly has one \enquote{piece} of information missing.
In the simplicial theory, the information is stored as labels on the faces resulting in a \emph{horn},
while in the cubical theory, one stores information usually only on \emph{point-level},%
\footnote{These cubesets are called \emph{concrete}, i.e. $X(n)\hookrightarrow X(0)^{2^n}$, see \cite[Definition 2.3.1]{Bihlmaier2026}.
Usually, one directly restricts everything to the full subcategory of concrete cubesets.}
resulting in a \emph{corner} (the cube without one vertex).
This allows for the definition of fibrant objects and fibrations.

\begin{definition}\leavevmode
\begin{enumerate}
        \item
\begin{enumerate}[(i)]

    \item
For $n\in\N$, the \emph{$n$-corner ${\bbcor}^n$} is the union of all linear faces of ${\bbox}^n$
of dimension less than $n$ that don't contain the point $(1,\ldots,1)$.

    \item
A cubeset $X$ is called \emph{fibrant} if it has \emph{corner completion},
i.e., for every corner ${\bbcor}^n\to X$ in $X$ there exists a completion to a cube ${\bbox}^n\to X$
along the inclusion ${\bbcor}^n\to{\bbox}^n$.%
\footnote{Classically, concrete cubespaces with corner completion are also referred to as \emph{nilspaces}.}

    \item
A map $f\colon X\to Y$ between cubesets is called a \emph{fibration}
if for every commutative diagram
\begin{center}
\begin{tikzcd}[ampersand replacement=\&]
	{{\bbcor}^n} \& X \\
	{{\bbox}^n} \& Y
	\arrow[from=1-1, to=1-2]
	\arrow[hook, from=1-1, to=2-1]
	\arrow[from=1-2, to=2-2]
	\arrow[dashed, from=2-1, to=1-2]
	\arrow[from=2-1, to=2-2]
\end{tikzcd}
\end{center}
there exists a diagonal lift ${\bbox}^n\to X$.
\end{enumerate}
        \item
\begin{enumerate}[(i)]
    \item For $n\in\N$ and $i\in[n]$, the \emph{$i$-th $n$-horn $\Lambda^n_i$} is the union of all faces of $\bbDelta^n$
of dimension less than $n$ that contain $i$.
    So $\Lambda^n_i$ is obtained from $\bbDelta^n$ by deleting the ($n-1$)-dimensional face opposite to $i$.

\item A simplicial set $X$ is called a \emph{Kan complex} if it has \emph{horn filling}:
every $n$-horn $\Lambda^n_i$ in $X$ can be extended to an $n$-simplex in $X$.%
\footnote{These are the fibrant objects in the Kan-Quillen model structure on simplicial sets.}
    \item A morphism $f\colon X\to Y$ between simplicial sets is called a (Kan-)\emph{fibration}
if for every commutative diagram
\begin{center}
\begin{tikzcd}[ampersand replacement=\&]
	{{\Lambda}_i^n} \& X \\
	{{\bbDelta}^n} \& Y
	\arrow[from=1-1, to=1-2]
	\arrow[hook, from=1-1, to=2-1]
	\arrow[from=1-2, to=2-2]
	\arrow[dashed, from=2-1, to=1-2]
	\arrow[from=2-1, to=2-2]
\end{tikzcd}
\end{center}
there exists a lift ${\bbDelta}^n\to X$.
\end{enumerate}
\end{enumerate}
\end{definition}

The first $n$-horns and $n$-corners may be visualized as follows.
\begin{center}
\begin{tikzpicture}[
    x=1cm,y=1cm,
    font=\small,
    line cap=round,
    line join=round,
    vertex/.style={
        circle,
        fill,
        inner sep=1.4pt
    },
    missingvertex/.style={
        circle,
        draw=gray!60,
        fill=white,
        line width=0.55pt,
        inner sep=1.25pt
    },
    mainedge/.style={
        draw=black,
        line width=0.8pt
    },
    ghostedge/.style={
        draw=gray!48,
        line width=0.55pt
    },
    simpedge/.style={
        draw=black,
        line width=0.75pt,
        -{Stealth[length=1.6mm,width=1.1mm]},
        shorten <= 8.0pt,
        shorten >= 8.0pt
    },
    ghostsimpedge/.style={
        draw=gray!45,
        line width=0.55pt,
        -{Stealth[length=1.5mm,width=1.0mm]},
        shorten <= 8.0pt,
        shorten >= 8.0pt
    },
    hornface/.style={
        fill=gray,
        fill opacity=0.12,
        draw=none
    },
    hornedgebg/.style={
        draw=gray,
        opacity=0.22,
        line width=3.0pt
    },
    cornerface/.style={
        fill=gray,
        fill opacity=0.11,
        draw=none
    },
    collab/.style={
        font=\normalsize
    },
    scale=0.47
]

\node[collab] at ( 1.10, 4.0) {$n=1$};
\node[collab] at ( 5.65, 4.0) {$n=2$};
\node[collab] at (10.95, 4.0) {$n=3$};

\begin{scope}[shift={(0,1.22)}]
    \coordinate (hAone) at (0,0);
    \coordinate (hBone) at (2.2,0);

    \draw[ghostsimpedge] (hAone)--(hBone);

    \node at (hAone) {$0$};
    \node[text=gray!55] at (hBone) {$1$};
\end{scope}

\begin{scope}[shift={(4.4,0.35)}]
    \coordinate (hAtwo) at (0,0);
    \coordinate (hBtwo) at (2.5,0);
    \coordinate (hCtwo) at (1.25,1.75);

    \draw[simpedge] (hAtwo)--(hBtwo);
    \draw[simpedge] (hAtwo)--(hCtwo);

    \draw[ghostsimpedge] (hBtwo)--(hCtwo);

    \node at (hAtwo) {$0$};
    \node at (hBtwo) {$1$};
    \node at (hCtwo) {$2$};
\end{scope}

\begin{scope}[shift={(9.6,0)}]
    \coordinate (hA) at (0,0);
    \coordinate (hB) at (2.7,0);
    \coordinate (hC) at (1.55,0.7);
    \coordinate (hD) at (1.35,2.45);

    \path[hornface] (hA)--(hB)--(hC)--cycle;
    \path[hornface] (hA)--(hB)--(hD)--cycle;
    \path[hornface] (hA)--(hC)--(hD)--cycle;

    \draw[simpedge] (hA)--(hB);
    \draw[simpedge] (hA)--(hC);
    \draw[simpedge] (hB)--(hC);
    \draw[simpedge] (hA)--(hD);
    \draw[simpedge] (hB)--(hD);
    \draw[simpedge] (hC)--(hD);

    \node at (hA) {$0$};
    \node at (hB) {$1$};
    \node at (hC) {$2$};
    \node at (hD) {$3$};
\end{scope}

\draw[gray!55,line width=0.5pt]
    (14.0,-0.35) -- (14.0,4.25);

\node[collab] at (16.60,4.0) {$n=1$};
\node[collab] at (21.75,4.0) {$n=2$};
\node[collab] at (27.00,4.0) {$n=3$};

\begin{scope}[shift={(15.5,1.25)}]
    \coordinate (cAone) at (0,0);
    \coordinate (cBone) at (2.2,0);

    \draw[ghostedge] (cAone)--(cBone);

    \node[vertex]        at (cAone) {};
    \node[missingvertex] at (cBone) {};
\end{scope}

\begin{scope}[shift={(20.5,0)}]
    \coordinate (cAtwo) at (0,0);
    \coordinate (cBtwo) at (2.5,0);
    \coordinate (cCtwo) at (0,2.5);
    \coordinate (cDtwo) at (2.5,2.5);

    \draw[mainedge] (cAtwo)--(cBtwo);
    \draw[mainedge] (cAtwo)--(cCtwo);

    \draw[ghostedge] (cBtwo)--(cDtwo);
    \draw[ghostedge] (cCtwo)--(cDtwo);

    \node[vertex]        at (cAtwo) {};
    \node[vertex]        at (cBtwo) {};
    \node[vertex]        at (cCtwo) {};
    \node[missingvertex] at (cDtwo) {};
\end{scope}

\begin{scope}[shift={(25.25,-0.35)}]
    \coordinate (cA) at (0,0);
    \coordinate (cB) at (2.4,0);
    \coordinate (cC) at (0,2.4);
    \coordinate (cD) at (2.4,2.4);

    \coordinate (cE) at (1.1,0.8);
    \coordinate (cF) at (3.5,0.8);
    \coordinate (cG) at (1.1,3.2);
    \coordinate (cH) at (3.5,3.2);

    \path[cornerface] (cA)--(cB)--(cD)--(cC)--cycle;
    \path[cornerface] (cA)--(cB)--(cF)--(cE)--cycle;
    \path[cornerface] (cA)--(cC)--(cG)--(cE)--cycle;

    \draw[mainedge] (cA)--(cB);
    \draw[mainedge] (cB)--(cD);
    \draw[mainedge] (cD)--(cC);
    \draw[mainedge] (cC)--(cA);

    \draw[mainedge] (cA)--(cE);
    \draw[mainedge] (cB)--(cF);
    \draw[mainedge] (cE)--(cF);

    \draw[mainedge] (cC)--(cG);
    \draw[mainedge] (cE)--(cG);

    \draw[ghostedge] (cD)--(cH);
    \draw[ghostedge] (cF)--(cH);
    \draw[ghostedge] (cG)--(cH);

    \foreach \P in {cA,cB,cC,cD,cE,cF,cG} {
        \node[vertex] at (\P) {};
    }
    \node[missingvertex] at (cH) {};
\end{scope}

\end{tikzpicture}
\end{center}
The fibrant objects form a subcategory of well-behaved objects in the sense that, e.g., strict limits of fibrant diagrams behave as one homotopically would like them to do.
Also, the most important examples that come up (such as group objects) are fibrant, and one has a fibrant replacement available.

If this lifting is unique from some dimension $k$ on, this allows us to reduce to this dimension,
resulting in a (strict) notion of a $k$-truncated object.

\begin{definition}\leavevmode
    \begin{enumerate}
        \item
\begin{enumerate}[(i)]
    \item A cubeset $X$ is called \emph{$k$-step} if for all $n\geq k+1$ the map
\[
    \hom({\bbox}^n, X) \to \hom({\bbcor}^n, X)
\]
is bijective, i.e., each $n$-corner in $X$ has a unique completion to an $n$-cube of $X$.
        \item The left adjoint to the inclusion of $k$-step cubesets is called the $k$-\emph{truncation} $\tau_k$.
    \end{enumerate}
        \item
        \begin{enumerate}[(i)]
            \item
A Kan complex $X$ is called a \emph{$k$-groupoid} if for all $n\geq k+1$ the map
\[
    \hom(\bbDelta^n,X)\to\hom(\Lambda^n,X)
\]
is bijective, i.e., every $n$-horn in $X$ has a unique completion to an $n$-simplex of $X$.

        \item The left adjoint to the inclusion of $k$-groupoids into Kan complexes is called the {\emph{fundamental $k$-groupoid}} $\pi_{\le k}$.
        \end{enumerate}
        \end{enumerate}
    \end{definition}

For good fibrant objects, there is a simple equivalence relation to compute the truncations by some equivalence relation (see e.g. \cite[Construction 054D]{Kerodon2026}).
In general, however, the truncation involves some iterated identification (and gluing) steps, since identifying two simplices whose horns agree identifies lower dimensional data, and similarly with the cubical situation, where even points get identified.

One important feature of the truncations is that they allow us to inductively reconstruct the space from its lower dimensional information,
in each step with some extension procedure by an (abelian) group, the \emph{structure}, respectively \emph{homotopy group}.
These homotopy groups somewhat are an algebraic structure on the fibres of the truncation:
The fiber of $\tau_n X\to \tau_{n-1} X$ over some point $x_0$ has to be thought of as the set of all $n$-dimensional spheres, equipped with the following multiplication.

\begin{center}
\begin{tikzpicture}[
    x=1cm,y=1cm,
    font=\small,
    line cap=round,
    line join=round,
    vertex/.style={
        circle,
        fill,
        inner sep=1.4pt
    },
    simpedge/.style={
        draw=black,
        line width=0.75pt,
        -{Stealth[length=1.6mm,width=1.1mm]},
        shorten <= 8.0pt,
        shorten >= 8.0pt
    },
    cubedge/.style={
        draw=black,
        line width=0.8pt
    },
    dashededge/.style={
        draw=gray!60,
        line width=0.65pt,
        dashed
    },
    scale=0.72
]

\begin{scope}[shift={(0,0)}]
    \coordinate (A) at (0,0);
    \coordinate (B) at (3.0,0);
    \coordinate (C) at (1.5,2.25);

    \draw[simpedge]
        (A)--node[midway,left=2pt] {$s$}(C);

    \draw[simpedge]
        (C)--node[midway,right=2pt] {$t$}(B);

    \draw[simpedge,draw=gray!55]
        (A)--node[midway,below=2pt,text=black] {$s*t$}(B);

    \node[
        vertex,
        label={[label distance=3pt]225:$x_0$}
    ] at (A) {};

    \node[
        vertex,
        label={[label distance=3pt]315:$x_0$}
    ] at (B) {};

    \node[
        vertex,
        label={[label distance=3pt]90:$x_0$}
    ] at (C) {};
\end{scope}

\draw[gray!55,line width=0.5pt]
    (4.45,-0.45) -- (4.45,2.80);

\begin{scope}[shift={(6.0,0.05)}]
    \coordinate (X)  at (0,0);
    \coordinate (A2) at (2.4,0);
    \coordinate (B2) at (0,2.0);
    \coordinate (AB) at (2.4,2.0);

    \draw[cubedge] (X)--(A2);
    \draw[cubedge] (X)--(B2);

    \draw[dashededge] (B2)--(AB);
    \draw[dashededge] (A2)--(AB);
    \draw[dashededge] (X)--(AB);

    \node[vertex] at (X)  {};
    \node[vertex] at (A2) {};
    \node[vertex] at (B2) {};
    \node[vertex] at (AB) {};

    \node[below left=3pt]  at (X)  {$x_0$};
    \node[below right=3pt] at (A2) {$a$};
    \node[above left=3pt]  at (B2) {$b$};
    \node[above right=3pt] at (AB) {$a\ast b$};
\end{scope}

\end{tikzpicture}
\end{center}
Note that for the simplicial picture, for composition to make sense we have to insist that the codomain of $s$ is the same as the domain of $t$, leading to the necessity to restrict to spheres instead of simply requiring them to agree on the $n$-horn, in contrast to the cubical world.
This leads to all sorts of different towers and truncations as e.g. the (weak) coskeleton tower.

In general, this doesn't give a well-defined composition since there are possibly many different horn fillings in $X$.
Making this operation well-defined precisely amounts to passing to the $n$-truncation.

\begin{definition}\leavevmode
    \begin{enumerate}
        \item  Let $X$ be a concrete fibrant cubeset and $x_0$ a point in $X$.
        Define the \emph{$k$-th structure group} $\pi_k(X)=\pi_k(X,x_0)$ to be the set of $k$-cubes in $\tau_k X$ whose restriction to the $k$-corner $\bbcor^k$ is constant $x_0$.
        Equip $\pi_k(X)$ with the binary operation induced by corner completion as depicted above.
This makes $\pi_k(X)$ an abelian group for $k\ge 1$.

        \item  Let $X$ be a Kan complex and $x_0$ a point in $X$.
        Define the \emph{$k$-th homotopy group} $\pi_k(X)=\pi_k(X,x_0)$ to be the set of $k$-simplices in $\pi_{\le k} X$ whose restriction to the $k$-\emph{boundary} $\partial \bbDelta^k$ is constant $x_0$.
        Equip $\pi_k(X)$ with a binary operation induced by horn filling as depicted above.
This makes $\pi_k(X)$ a group for $k\ge 1$, which is abelian for $k\ge 2$.
   \end{enumerate}
\end{definition}

Luckily, for fibrant objects, the computation of the truncation localizes onto spheres and
reduces to a simple identification step there, essentially making the composition with the identity well-defined, which suffices for constructing a group.
\begin{proposition}\leavevmode
    \begin{enumerate}
        \item     Let $X$ be a concrete fibrant cubeset and $x_0$ a point in $X$.
        Then $\pi_n(X)$ is equivalently the set of $n$-cubes of $X$ with corner $x_0$ modulo the equivalence relation given by identifying two such $n$-cubes $s,t$
        if there exists an $n+1$-cube in $X$
        whose restriction to the last lower face is $s$, a diagonal is $t$ and the rest of the points are $x_0$.

        \item
    Let $X$ be a Kan complex and $x_0$ a point in $X$.
        Then $\pi_n(X)$ can be computed as the set of $n$-spheres of $X$ based at $x_0$ modulo the following equivalence relation:
Two $n$-spheres $s,t$ in $X$ based at $x_0$ are equivalent if there exists an $n+1$-simplex with last face $s$, second-to-last face $t$, and the rest of the faces all being constant $x_0$.
\end{enumerate}
    \end{proposition}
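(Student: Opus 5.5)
Both parts follow one pattern. We show that the naive quotient (cubes, resp. spheres, with the stated relation) is well defined, and then that it agrees with the truncation of the definition. The truncation is a left adjoint, so we do not compute it directly. Instead we construct a candidate quotient and check the universal property, localized at the fibre over $x_0$.

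\textbf{Step 1: the relation is an equivalence relation compatible with the operation.} For (1), let $s,t$ be $n$-cubes with corner $x_0$. Write $s\sim t$ if there is an $(n+1)$-cube with last lower face $s$, a diagonal $t$, and all other points $x_0$. Reflexivity comes from the cube-morphism $\bbox^{n+1}\to\bbox^n$ that forgets the last coordinate, composed with $s$. For symmetry and transitivity, glue two such $(n+1)$-cubes along a common face to get an $(n+2)$-configuration, then use corner completion in $X$ to produce the witnessing cube. This is the same gluing-plus-completion trick used throughout the paper. We then check that $\ast$, defined by completing the corner $(x_0,a,b)$ in two of the directions, respects $\sim$: different completions of the same corner differ exactly by a witness of $\sim$, obtained by completing an $(n+1)$-corner built from the two competing cubes. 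For (2) we argue in the same way with horn filling in $\Lambda^{n+2}_i$, following the standard argument of Kerodon \cite[Construction 054D]{Kerodon2026}. That argument shows the relation ``there is an $(n+1)$-simplex with faces $(x_0,\dots,x_0,t,s)$'' is an equivalence relation, and that it coincides with simplicial homotopy rel boundary.

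\textbf{Step 2: identifying with the truncation.} Define $Y$ to agree with $X$ in dimensions $<n$, with $n$-cubes (resp. $n$-simplices) taken modulo $\sim$. Higher cells are the unique completions, which exist by Step 1. We verify that $Y$ is $n$-step (resp. an $n$-groupoid): uniqueness of $(n+1)$-fillers modulo $\sim$ is exactly the defining property of $\sim$, and higher uniqueness follows inductively. We then show that every map $X\to Z$ with $Z$ $n$-step factors through $Y$. In $Z$, two $n$-cubes related by a witnessing $(n+1)$-cube have the same $(n+1)$-corner, hence coincide, so the map factors. This shows $Y=\tau_nX$ on the relevant fibre and identifies $\pi_n$ with the quotient.

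\textbf{Main obstacle.} The hard point is the caveat mentioned before the proposition: in general, truncation also identifies lower-dimensional data. We must show that for fibrant $X$ no identification happens below dimension $n$ and no further identifications in dimension $n$ beyond $\sim$. My plan is to prove that the quotient $Y$ is already fibrant and $n$-step, so that no iteration is needed. Corner completion in $Y$ lifts from $X$, and uniqueness is Step 1. The main effort will be checking that the $(n+1)$-dimensional uniqueness is compatible with all faces, not only the last one, since the relation is defined using one specific face and diagonal. For that I would use the symmetries of $\bbox$ (reflections and permutations of coordinates) together with corner completion to move the witness to any face.
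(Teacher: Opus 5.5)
The paper does not prove this proposition itself. It is recalled from the literature, and in part (iii) of the proof of the main theorem the two descriptions are quoted from \cite[Lemma 3.3.9]{Bihlmaier2026} and \cite[Lemma 7.4]{Goerss1999}. Your plan for (2) is the standard Goerss--Jardine/Kerodon argument and works, because for a Kan complex $\pi_{\le n}X$ really does have the same $m$-simplices as $X$ for $m<n$.

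For (1), however, Step 2 fails. The claim in your ``main obstacle'' paragraph---that for fibrant $X$ the truncation makes no identifications below dimension $n$---is false for cubesets. This is exactly the asymmetry between the two theories that the paper emphasizes.

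\begin{itemize}
\item The corner ${\bbcor}^{n+1}$ omits the vertex $(1,\dots,1)$. Uniqueness of $(n+1)$-corner completions in an $n$-step cubeset therefore forces the missing \emph{vertex} to be unique, so $\tau_n$ must identify points.
\item A counterexample: $X=\mathcal{D}_2(\mathbb{Z})$ is fibrant, and every map $\{0,1\}^2\to\mathbb{Z}$ is a $2$-cube. So the $2$-cubes $(0,0,0,a)$ and $(0,0,0,b)$ share their corner, and $a$ and $b$ become equal in every $1$-step target. Hence $\tau_1X$ is a point, while your $Y$ keeps $Y(0)=\mathbb{Z}$ and is not $1$-step.
\item A cube of a concrete cubeset is determined by its vertices, so you cannot take $n$-cubes modulo $\sim$ while keeping the points. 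Identifying based cubes $s,t$ \emph{means} identifying the points $s(1,\dots,1)$ and $t(1,\dots,1)$.
\item Your relation is only defined on cubes based at $x_0$. It therefore does not produce a global object to which a universal property could apply.
\end{itemize}

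The missing idea is to quotient points rather than $n$-cubes. Set $x\sim_n y$ if there are two $(n+1)$-cubes of $X$ that agree on ${\bbcor}^{n+1}$ and take the values $x$ and $y$ at $(1,\dots,1)$. Using fibrancy, one shows three things:
\begin{itemize}
\item $\sim_n$ is an equivalence relation;
\item $X(0)/{\sim_n}$ with the image cubes is $n$-step and is $\tau_nX$ (your observation that a map to an $n$-step $Z$ collapses data sharing an $(n+1)$-corner is the key to the factorization);
\item $X\to\tau_nX$ is a fibration.
\end{itemize}
Then every based $n$-cube of $\tau_nX$ lifts to an $n$-cube of $X$ based at $x_0$, by lifting the constant corner $x_0$ along this fibration. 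Two based cubes $s,t$ have the same image if and only if $s(1,\dots,1)\sim_n t(1,\dots,1)$. Corner completion converts this into the particular witnessing $(n+1)$-cube of the statement.

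A minor point: gluing two $(n+1)$-cubes in the paper's sense yields an $(n+1)$-cube, not an $(n+2)$-configuration. For symmetry and transitivity you presumably mean assembling an $(n+2)$-corner.
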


We now see that this group structure is compatible with the truncations.
In fact, one can reverse-engineer the notions of truncation etc., starting with the desire to make this composition well-defined.
More precisely, in both situations, iteratively applying the truncations yields a tower of factors
\begin{center}
\begin{tikzcd}
	X & \cdots & {\tau_{2}X} & {\tau_1 X} & {\tau_0 X} \\
	X & \cdots & {\pi_{\le 2 } X} & {\pi_{\le 1} X} & {\pi_{\le 0} X}
	\arrow[from=1-1, to=1-2]
	\arrow[from=1-2, to=1-3]
	\arrow[from=1-3, to=1-4]
	\arrow[from=1-4, to=1-5]
	\arrow[from=2-1, to=2-2]
	\arrow[from=2-2, to=2-3]
	\arrow[from=2-3, to=2-4]
	\arrow[from=2-4, to=2-5]
\end{tikzcd}
\end{center}
Note that in both cases the transition maps are fibrations, and the maps $X\to \tau_n X$ resp. $X\to \pi_{\le n}X$ induce isomorphisms on all $\pi_i$ for $i\le n$ and fulfill $\pi_i(\tau_n X)=0$ resp. $\pi_i(\pi_{\le n}X)=0$ for $i>n$.
Towers with the latter property are classically called \emph{Postnikov towers}.
This particular tower is called the \emph{canonical Postnikov tower} in the case of simplicial sets,
and the \emph{Host-Kra structure tower} in the case of cubesets.

These statements, usually with a concrete description of the fibers as Eilenberg-Mac Lane spaces and a bundle formulation,
are called the \emph{weak structure theorem} in the case of cubical sets.

There are many more similarities between both theories, e.g., connectedness of simplicial sets corresponds to the notion of ergodicity in the cubical terminology.
Let us remark on some further similarities, on which we plan to elaborate in a future work in more detail \cite{Bihlmaier2026b}.

\begin{remark}
    \begin{itemize}
        \item
There are natural Dold-Kan style statements in both settings.
        In the cubical setting, one has an equivalence between filtered groups and concrete cubegroups.
        In the simplicial setting, it is an equivalence between hypercrossed complexes of groups and simplicial groups,
        specializing in the abelian case to an equivalence between connective chain complexes of abelian groups and simplicial abelian groups.
         \item
    Using Dold-Kan, one arrives at canonical definitions of Eilenberg-Mac Lane spaces in both settings.
    This in turn opens the question to ask for the fiber of the transition map in the Postnikov tower to be isomorphic to $K(A,n)$ rather than weakly equivalent.
    In the cubical setting this isomorphism is strict on the nose.
  In the simplicial setting, this is achieved by passing to a twisted Postnikov tower
(with the trade-off of losing strictness in $X=\varprojlim \tau_{n}X$), see~\cite{May1967}.
        This allows us to strengthen the Postnikov statements to $\tau_k X\to\tau_{k-1} X$ being a $D_k(A)$-fiber bundle,
        resp. $\pi_{\le k}X\to \pi_{\le k-1}X$ being a $K(\pi_k(X),k)$-bundle.

    \item  In both theories, there exist definitions of cocycles, coboundaries, cohomology groups and extensions,
        and the usual relationships hold.
    \end{itemize}
\end{remark}

To conclude our discussion of similarities between the theory of nilspaces and simplicial homotopy theory let us emphasize one major difference.
For cubesets, the $n$-corner served both as the horn \emph{and} the boundary,
whereas for simplicial sets there is a distinction.
For cubesets, one mainly considers concrete presheaves, so every construction has an impact on the underlying set.
Thus, one cannot create non-trivial boundaries if the boundary does not \enquote{miss a point}.
This is in contrast to simplicial sets where the homotopical information is usually not stored on the underlying points but rather written on the \emph{labels} of the higher dimensional faces themselves.
This is precisely what makes the translation from cubesets to simplicial sets difficult and is solved combinatorially by introducing the $C_n$ and $D_n$ cubes.

\subsubsection*{Acknowledgments}

We thank Asgar Jamneshan for many valuable discussions related to this paper and for comments on an earlier draft.

\subsubsection*{Disclosure of AI Usage}

We have made use of ChatGPT to discuss some of the ideas of the paper which have been helpful to gain insight and intuition.
It has provided Python code that computes configurations of degenerable cubes
and the gluing closures of the cubes $C_n$ and $D_n$,
and an HTML script that allows us to interactively display the generated cubes
and navigate through the data set.
Moreover, it suggested to use label functions in Definition~\ref{def:formal-cubes}
and observed uniqueness of the cube $C_n$ for which it provided the first proof of Proposition~\ref{prop:unique-Cn}, which has then been improved upon by the authors.
Lastly, many of the included TikZ-figures are written by ChatGPT.
We also used it to correct spelling mistakes and check proofs.

Other than in the above disclosed parts, there has been no involvement of artificial intelligence in this paper.

\section{Combinatorial Considerations}

In this chapter we develop the combinatorics that go into the definition of the transition functor in the coming sections.
The central notion revolves around \emph{degenerability}, a property that a \emph{formal cube} can have and which is essential to obtaining a simplicial set out of a cubeset.
At first, there are many degenerable formal cubes, but we will see that in order to understand them, it suffices to understand two kinds of cubes we call $C_n$ and $D_n$.

\subsection{Formal Cubes}

Let us describe the formal objects we are interested in for constructing the kernel $K$ of the functor.

Recall that we denote the objects of $\bbox$ by $\{0,1\}^n$ and the objects of $\bbDelta$ by $[n]$ for $n\in\N_0$.

\begin{definition}\label{def:formal-cubes}
A formal $n$-dimensional cube with labels in $[m]$ is a function
\[
    X\colon\{0,1\}^n\to\su[m].
\]
\end{definition}

Note that there is a natural identification of ordered sets
\[
    \su[m] \cong \mathcal{P}(\{0,\dots,m\})\setminus\emptyset
\]
since each subobject of $[m]$ in $\bbDelta$ is uniquely determined by the corresponding subset of $\{0,\ldots,m\}$.

\begin{remark}\label{rem:formal-cubes-label}
To any formal cube $X$ we associate a sequence $f_0,\ldots,f_m$ of functions $f_i\colon\{0,1\}^n\to\{0,1\}$,
the \emph{label functions of $X$}, telling us whether, at a given point $x\in\{0,1\}^n$, the value $X(x)$ contains $i$, i.e.,
\[
    i\in X(x)\quad \iff \quad f_i(x)=1.
\]
The cube $X$ can be uniquely reconstructed from its label functions: simply set
\[
    X(x)=\{i\in\{0,\ldots,m\} \mid f_i(x)=1\}.
\]
So we can think of $X$ in terms of its label functions $f_i$.
We write $x = (x_0,\ldots,x_{n-1}) = (x_j)_{j=0}^{n-1}$ for the points $x\in\{0,1\}^n$, starting with $x_0$,
and call $x_j$ the $j$-th coordinate direction (or variable).
We also write $x_j$ for the projection $\{0,1\}^n\to\{0,1\}$ onto the $j$-th coordinate.
We identify $\{0,1\}$ with the Boolean algebra with two elements and use Boolean operations to denote maps $\{0,1\}^n\to\{0,1\}$.
Also, we use the convention $x_{-1}=1$ and $x_n=0$.
\end{remark}

\begin{example}\label{ex:Cn-Dn}
The cubes $C_n\colon\{0,1\}^n\to\su[n]$ for $n\in\N$ are given by
\[
    f_i = x_i\vee\neg x_{i-1}.
\]
The cubes $D_n\colon\{0,1\}^n\to\su[n]$ for $n\in\N$ are given by
\[
    f_i = x_i\vee\neg x_{i-1}
\]
for $i\leq n-1$ and $f_n = 1$.
\end{example}

There are two natural operations on formal cubes: pullback in direction of $\bbox$ and pushforward in direction of $\bbDelta$.

\begin{definition}\label{def:formal-pull-push}
Given a map $\phi\colon{\bbox}^k\to{\bbox}^n$ in $\bbox$ and a formal cube $X\colon\{0,1\}^n\to\su[m]$,
the \emph{pullback of $X$ along $\phi$} is the formal cube $X\circ\phi\colon\{0,1\}^k\to\su[m]$.

Given a morphism $\psi\colon[m]\to[l]$ in $\bbDelta$ and a formal cube $X\colon\{0,1\}^n\to\su[m]$,
the \emph{pushforward of $X$ along $\psi$} is the formal cube $\psi_\ast\circ X\colon\{0,1\}^n\to\su[l]$,
where $\psi_\ast\colon\su[m]\to\su[l]$ is the induced pushforward of subobjects, the direct image function.
\end{definition}

\begin{example}\label{ex:formal-pull}
In terms of label functions, pullback along a map in $\bbox$ simply is given by precomposing every label function $\{0,1\}^n\to\{0,1\}$ with the corresponding map,
yielding label functions $\{0,1\}^k\to\{0,1\}$.

A particularly important example is given by the outer (lower and upper) faces of codimension 1.
Recall that the subset $[x_j = \epsilon]$ for $\epsilon\in\{0,1\}$ determines a (canonical) subobject of $\{0,1\}^n$ in $\bbox$, previously denoted by $\epsilon_j$ or $\overline{\epsilon}_j$,
depending on $\epsilon$, see \cite{Bihlmaier2026},
which is the $j$-th outer (lower or upper) face of codimension 1.
Then we denote by $f_i(x_j=\eps)$ the pullback of the $f_i$ with the cube face map $\{0,1\}^{n-1}\to\{0,1\}^n$.
Concretely, this means that we plug in $\eps$ for the coordinate $x_j$ and thus get a function that only depends on the other ($n-1$)-many coordinates.

Another example is given by flips: if $\sigma_j\colon\{0,1\}^n\to\{0,1\}^n$ flips the $j$-th coordinate of the cube,
then on label functions this corresponds to inverting every appearance of $x_j$.
\end{example}

\begin{example}\label{ex:formal-push}
We can also describe what label pushforward along a map $\psi\colon[m]\to[l]$ in $\bbDelta$ means for a formal cube $X = (f_i)$.
The label functions $g_i$ of $\psi_\ast\circ X$ are then given by
\[
    g_i = \bigvee_{j\in\psi^{-1}(i)} f_j
\]
where the supremum is taken pointwise and with the convention that the supremum over the empty set is $0$.

For instance, if $\psi = \sigma_i\colon[m]\to[m-1]$ is the $i$-th degeneracy identifying $i$ with $i+1$,
then $\sigma_{i,\ast} X$ has label functions $f_j$ for $0\leq j\leq i-1$, $f_i\vee f_{i+1}$ for $j = i$ and $f_{j+1}$ for $i+1\leq j\leq m-1$.

And if $\psi = \delta_i\colon[m]\to[m+1]$ is the $i$-th face map which is injective and misses the value $i$,
then $\delta_{i,\ast} X$ has label functions $f_j$ for $0\leq j\leq i-1$, $0$ for $j = i$ and $f_{j-1}$ for $i+1\leq j\leq m$.
\end{example}

If two formal $n$-cubes $A$ and $B$ with labels in $[m]$ agree on a face $[x_j = \epsilon]$, we can build a cube $AB$ which shall be thought of as glued along the common face,
compare with \cite[Section 3.1.1]{Bihlmaier2026}.

\begin{definition}\label{def:formal-gluing}
Let $0\le j_0<n$ and $\eps\in\{0,1\}$ such that
\[
    A\rvert_{x_{j_0}=\eps}=B\rvert_{x_{j_0}=\eps}.
\]
Then the gluing $AB$ of $A$ and $B$ is the formal cube
\[
    AB(x) = (x_{j_0}\wedge A\rvert_{x_{j_0}=\neg \eps})\vee(\neg x_{j_0}\wedge B\rvert_{x_{j_0}=\neg \eps})
    = \begin{cases}
        A\rvert_{x_{j_0}=\neg \eps},\quad &\text{if}\; x_{j_0}=1, \\
        B\rvert_{x_{j_0}=\neg \eps},\quad &\text{if}\; x_{j_0}=0.
      \end{cases}
\]
\end{definition}

\begin{remark}\label{rem:formal-gluing-label}
In terms of label functions this definition takes the following form.
Two cubes $A=(f_i)_i$ and $B=(g_i)_i$ can be glued along the face $[x_{j_0}=\eps]$ for $\eps\in\{0,1\}$ if for all $i$,
\[
    f_i(x_{j_0}=\eps)=g_i(x_{j_0}=\eps).
\]
The gluing $AB$ then is given by the label functions
\[
    h_i = (x_{j_0}\wedge f_i(x_{j_0}=\neg\eps))\vee((\neg x_{j_0})\wedge g_i(x_{j_0}=\neg\eps)).
\]
Note that if some coordinate function of $A$ and $B$ agrees, i.e., $f_{i_0} = g_{i_0}$, then the gluing $AB$ has the same label function with $x_{j_0}=\neg\eps$ plugged in.
In particular, if the label function $f_{i_0} = g_{i_0}$ does not depend on $x_{j_0}$, then the label function of the glued cube is the same as the original one.
Thus, when gluing two cubes along a direction $x_{j_0}$, it only matters to look at those label functions that are different or depend on $x_{j_0}$.
In many applications, we will only glue cubes where the label functions are the same at almost every index and only a few depend on $x_{j_0}$, making gluing local.
\end{remark}

\begin{remark}\label{rem:formal-gluing-push}
Gluing formal cubes is stable under pushforward of labels:
if $A$ and $B$ are formal $n$-cubes with labels in $[m]$ that can be glued along the face $[x_{j_0}=\eps]$ and $AB$ denotes their gluing
as in Definition~\ref{def:formal-gluing}, then for any map $\psi\colon[m]\to[k]$ in $\bbDelta$,
the cubes $\psi_\ast A$ and $\psi_\ast B$ can also be glued along the face $[x_{j_0}=\eps]$ and $\psi_\ast(AB) = (\psi_\ast A)(\psi_\ast B)$.
\end{remark}

\begin{remark}
Gluing is in general not associative nor commutative
because we can glue along different faces after one another.
If no brackets are placed, we read composition from right to left.
We don't usually indicate along which faces we have glued
since in practice there is no ambiguity in what compositions mean and along which faces one glues.
\end{remark}

\begin{definition}\label{def:formal-degenerate}
A formal $n$-cube $X$ on label set $[m]$ is \emph{degenerate (along the $j$-th coordinate direction)}
if there exists a formal ($n-1$)-cube $Y$ on label set $[m]$ such that $X = p_j^\ast Y = Y\circ p_j$
where $p_j\colon\{0,1\}^n\to\{0,1\}^{n-1}$ is the $j$-th coordinate projection.
\end{definition}

\begin{remark}\label{rem:formal-degenerate-label}
Equivalently, a formal cube is degenerate if it doesn't depend on the $j$-th coordinate.
For its label functions this means that every label function doesn't depend on the $j$-th coordinate.
\end{remark}

\subsection{Classification of Degenerable Cubes}\label{ssec:classification}

The central class of $n$-dimensional cubes on the label set $[n]$ is the following.
This definition is made so that in the resulting simplicial set of a cubeset there exist degeneracies,
and not only face maps.

\begin{definition}\label{def:formal-degenerable}
A formal $n$-cube on label set $[n]$ is \emph{degenerable}
if for every $0\le i<n$ pushforward along $\sigma_i$ yields a cube degenerate along the $i$-th coordinate direction.
\end{definition}

\begin{remark}\label{rem:formal-degenerable-label}
In formulas, a cube is degenerable if and only if for every $0\le i<n$, the function
\[
    f_i\vee f_{i+1}\colon\{0,1\}^n\to\{0,1\}
\]
does not depend on $x_i$.
\end{remark}

\begin{remark}
Of the automorphism group of the cubes, only flips preserve degenerability:
if $X$ is a degenerable cube and $\sigma\colon\{0,1\}^n\to\{0,1\}^n$ a (composition of) flip,
then $X\circ\sigma$ is degenerable as well.
But coordinate permutations don't preserve this property,
so one could be tempted to define a notion being closed under automorphisms.
However, we will avoid these anyway and believe that the theory is cleaner in this way.

Also, the label pushforward of a degenerable cube $X$ mostly results in a non-degenerable cube,
which often is degenerate (take, e.g., $\sigma_{i,\ast} X$ for a degenerable cube $X$).
\end{remark}

\begin{remark}
Gluing something degenerate to something non-degenerate can yield a degenerate formal cube,
and gluing two degenerate formal cubes can yield a non-degenerate formal cube.
\end{remark}

Now we classify how the label functions of a degenerable cube can look like.
Recall the convention that $x_{-1}=1$ and $x_{n}=0$.
The key insight is the following observation.

\begin{lemma}
The $i$-th label function $f_i$ of a degenerable formal $n$-cube does only depend on the coordinates $x_i$ and $x_{i-1}$.
\end{lemma}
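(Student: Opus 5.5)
The plan is to unfold Definition~\ref{def:formal-degenerable} completely in terms of label functions, using Example~\ref{ex:formal-push} and Remark~\ref{rem:formal-degenerate-label}, and then read off which coordinates each single $f_j$ is allowed to depend on. One point needs care. Remark~\ref{rem:formal-degenerable-label} only records the condition on the merged label $f_i\vee f_{i+1}$. The actual definition requires that the whole pushforward $\sigma_{i,\ast}X$ be degenerate along the $i$-th direction, and this constrains every other label as well. I would make that explicit first.

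So fix $0\le i<n$. By Example~\ref{ex:formal-push}, the label functions of $\sigma_{i,\ast}X$ are:
\begin{itemize}
\item $f_j$ for $0\le j\le i-1$;
\item $f_i\vee f_{i+1}$ in slot $i$;
\item $f_{j+1}$ in slot $j$ for $i+1\le j\le n-1$.
\end{itemize}
By Remark~\ref{rem:formal-degenerate-label}, $\sigma_{i,\ast}X$ is degenerate along the $i$-th direction if and only if none of these functions depends on $x_i$. Hence degenerability of $X$ gives, for each $0\le i<n$: every $f_j$ with $j\notin\{i,i+1\}$ is independent of $x_i$, and in addition $f_i\vee f_{i+1}$ is independent of $x_i$.

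Now fix a label $j\in\{0,\ldots,n\}$ and a coordinate direction $x_k$ with $0\le k\le n-1$ and $k\notin\{j,j-1\}$. Then $j\notin\{k,k+1\}$. Applying the previous step with $i=k$ shows that $f_j$ does not depend on $x_k$. Since this holds for all such $k$, the function $f_j$ depends at most on the coordinates $x_j$ and $x_{j-1}$.

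At the boundary labels, $x_{-1}$ or $x_n$ is not a genuine coordinate, so the statement reads with the convention $x_{-1}=1$, $x_n=0$ of Remark~\ref{rem:formal-cubes-label}. Thus $f_0$ depends only on $x_0$ and $f_n$ only on $x_{n-1}$, which is consistent with the argument above.

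I do not expect a real obstacle. The only subtle step is recognising that degeneracy of $\sigma_{i,\ast}X$ constrains all labels and not just the merged one; the condition on $f_i\vee f_{i+1}$ is not needed for this lemma and is kept for the subsequent classification.
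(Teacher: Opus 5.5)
Your proof is correct and is the same argument as the paper's: for a coordinate $x_k$ with $k\notin\{j-1,j\}$, collapsing labels $k$ and $k+1$ leaves $f_j$ untouched as a label of $\sigma_{k,\ast}X$, so degeneracy of $\sigma_{k,\ast}X$ along $x_k$ forces $f_j$ to be independent of $x_k$. Your observation that this needs the full definition (every label of $\sigma_{k,\ast}X$ must be independent of $x_k$), and not only the condition on $f_k\vee f_{k+1}$ recorded in Remark~\ref{rem:formal-degenerable-label}, is accurate; the paper's proof relies on the same point implicitly.
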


\begin{proof}
For all $j\notin \{i-1,i\}$, collapsing the labels $j$ and $j+1$ does not affect the value of $f_i$.
Since it must result in a cube which is degenerate in the $j$-th coordinate direction,
$f_i$ needs to be independent of $x_j$.
\end{proof}

This means that the label function factors over the map of cubes $\{0,1\}^n\to\{0,1\}^2$ and thus we also write $f_i=f_i(x_{i-1},x_i)$, discarding all other coordinate directions.

\begin{corollary}
The label functions $f_i$ of a degenerable formal cube only have 16 possibilities, corresponding to all maps $\{0,1\}^2\to\{0,1\}$.
They can be expressed by the following Boolean operations in two variables:
\begin{align*}
    \{&0,1,x_i,\neg x_i, x_{i-1}, \neg x_{i-1}, x_i \wedge x_{i-1}, \neg x_i \wedge x_{i-1}, x_i\wedge \neg x_{i-1}, \neg x_i \wedge \neg x_{i-1}, x_i\vee x_{i-1},	\\
    & \neg x_i \vee x_{i-1}, x_{i}\vee \neg x_{i-1}, \neg x_i\vee \neg x_{i-1}, (x_i\wedge x_{i-1})\vee (\neg x_i\wedge \neg x_{i-1}), (x_i\wedge \neg x_{i-1})\vee (\neg x_i\wedge x_{i-1})\}
\end{align*}
\end{corollary}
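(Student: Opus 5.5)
This corollary follows directly from the preceding lemma. By that lemma, the label function $f_i$ of a degenerable formal $n$-cube depends only on the two coordinates $x_{i-1}$ and $x_i$. So $f_i$ factors as $f_i = g\circ \pi$, where $\pi\colon\{0,1\}^n\to\{0,1\}^2$ is the projection $x\mapsto (x_{i-1},x_i)$ and $g\colon\{0,1\}^2\to\{0,1\}$ is some function. The projection $\pi$ is affine linear, so it is a cube map in $\bbox$.

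At the boundary indices $i=0$ and $i=n$, the conventions $x_{-1}=1$ and $x_n=0$ make one of the two arguments constant. In these cases we still regard $f_i$ as a function $g(x_{i-1},x_i)$ of two Boolean variables, one of which happens to be fixed. This gives a subset of the same list, so no extra case is needed beyond this remark.

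It remains to count the maps $\{0,1\}^2\to\{0,1\}$ and name each one. There are $2^{2^2}=16$ such maps. The listed expressions realize all of them:
\begin{itemize}
\item the two constants $0$ and $1$;
\item the four literals $x_i,\neg x_i,x_{i-1},\neg x_{i-1}$;
\item the four conjunctions of literals, which are the functions equal to $1$ at exactly one point;
\item the four disjunctions of literals, which are the functions equal to $1$ at exactly three points;
\item the two remaining functions with exactly two $1$'s that are not literals, namely equivalence and exclusive-or.
\end{itemize}
This sorts the functions by their number of $1$'s: $1+4+6+4+1=16$. The six functions with exactly two $1$'s are the four literals together with equivalence and exclusive-or. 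The truth tables in this list are pairwise distinct, so the list is exactly the set of all 16 maps.

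There is no real obstacle here. The only point needing care is the boundary indices $i=0,n$, where the conventions on $x_{-1}$ and $x_n$ must be read correctly. The 16 possibilities are an upper bound: degenerability also imposes the joint conditions that $f_i\vee f_{i+1}$ is independent of $x_i$, which cut the set of degenerable cubes down further.
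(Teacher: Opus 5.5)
Your proposal is correct and follows the paper's argument: the preceding lemma gives the factorization $f_i=f_i(x_{i-1},x_i)$ through $\{0,1\}^2$, and the displayed list is just an enumeration of all $16$ Boolean functions of two variables. Your count by the number of $1$'s ($1+4+6+4+1$) checks the completeness of that list, which the paper leaves implicit, and your remarks on the boundary indices and on $16$ being only an upper bound are consistent with the remark and lemma that follow in the paper.
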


We also denote $(x_i\wedge x_{i-1})\vee (\neg x_i\wedge \neg x_{i-1})$ by $x_i\iff x_{i-1}$
and its negation $(x_i\wedge \neg x_{i-1})\vee (\neg x_i\wedge x_{i-1})$ by $\neg(x_i\iff x_{i-1})$.
	
\begin{remark}
The starting coordinate convention $x_{-1}=1$ actually narrows $f_0$ to be of the form
\[
    f_0\in \{0,1,x_{0},\neg x_0\}
\]
and the endpoint convention $x_n=0$ forces
\[
    f_{n}\in \{0,1,x_{n-1},\neg x_{n-1}\}.
\]
\end{remark}

\begin{corollary}
A formal cube is degenerable if and only if for all $0\leq i<n$ its label functions $f_i$ only depend on the coordinates $x_{i-1},x_i$ and
\[
    f_i\vee f_{i+1}\colon \{0,1\}^3\to \{0,1\}
\]
is independent of $x_i$.
\end{corollary}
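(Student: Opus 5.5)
The statement is an "if and only if" that repackages Remark~\ref{rem:formal-degenerable-label} together with the preceding Lemma, so I would prove the two directions separately, leaning on those two results.

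\emph{Forward direction.} Suppose the cube is degenerable. The preceding Lemma gives at once that each $f_i$ depends only on $x_{i-1}$ and $x_i$. In particular $f_i\vee f_{i+1}$ depends only on $x_{i-1},x_i,x_{i+1}$, so it factors over the projection $\{0,1\}^n\to\{0,1\}^3$ onto those coordinates. Remark~\ref{rem:formal-degenerable-label} says this supremum does not depend on $x_i$ as a function on $\{0,1\}^n$. Since the projection is surjective, the factored function on $\{0,1\}^3$ is also independent of $x_i$. For the boundary cases $i=0$ and $i=n-1$, the conventions $x_{-1}=1$ and $x_n=0$ mean that one of the three variables is a constant. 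I would read the statement there with that constant plugged in, and the argument goes through verbatim.

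\emph{Backward direction.} Assume each $f_i$ depends only on $x_{i-1},x_i$, and that $f_i\vee f_{i+1}$, viewed as a function on $\{0,1\}^3$, is independent of $x_i$. Precomposing with the surjective projection $\{0,1\}^n\to\{0,1\}^3$ onto coordinates $(x_{i-1},x_i,x_{i+1})$ recovers $f_i\vee f_{i+1}$ on $\{0,1\}^n$. A function that is independent of $x_i$ on $\{0,1\}^3$ stays independent of $x_i$ after this precomposition. Remark~\ref{rem:formal-degenerable-label} then says the cube is degenerable. Strictly, the hypothesis that $f_i$ depends only on $x_{i-1},x_i$ is not needed for this implication. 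It is needed only so that writing $f_i\vee f_{i+1}$ as a function on $\{0,1\}^3$ makes sense.

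There is no real obstacle here. The only point needing care is being precise about "independent of $x_i$" under pullback along a coordinate projection, and about the degenerate endpoint conventions, where the domain effectively shrinks to $\{0,1\}^2$.
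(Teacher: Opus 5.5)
\textbf{The forward direction is fine, but the backward direction has a genuine gap.} You use Remark~\ref{rem:formal-degenerable-label} as a sufficient criterion for degenerability, and you state explicitly that the locality hypothesis is not needed for that implication. That claim is false.

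By Definition~\ref{def:formal-degenerable} and Remark~\ref{rem:formal-degenerate-label}, degenerability requires \emph{every} label function of $\sigma_{i,\ast}X$ to be independent of $x_i$. By Example~\ref{ex:formal-push}, these label functions are $f_0,\dots,f_{i-1}$, $f_i\vee f_{i+1}$, $f_{i+2},\dots,f_n$. Read literally, the Remark only records the condition on the merged function $f_i\vee f_{i+1}$. The untouched $f_j$ must also be independent of $x_i$, and this is exactly what the proof of the preceding Lemma exploits.

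A concrete counterexample: take $n=2$ and $(f_0,f_1,f_2)=(x_1,1,0)$.
\begin{itemize}
\item Both $f_0\vee f_1$ and $f_1\vee f_2$ are constant $1$, so the supremum conditions hold and trivially factor through the relevant coordinates.
\item However, $\sigma_{1,\ast}X$ has label functions $(x_1,1)$, so it is not degenerate along $x_1$.
\item Hence $X$ is not degenerable.
\end{itemize}
The locality hypothesis is precisely what the corollary adds to the Remark to make it a true equivalence.

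\textbf{The repair is a short check.} Suppose every $f_j$ depends only on $x_{j-1},x_j$. For $j\notin\{i,i+1\}$ we have $i\notin\{j-1,j\}$, so $f_j$ is independent of $x_i$. Together with the hypothesis on $f_i\vee f_{i+1}$, every label function of $\sigma_{i,\ast}X$ is independent of $x_i$ for each $0\le i<n$. This is degenerability by definition.

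This argument needs locality for all $0\le j\le n$, including $f_n$. Indeed, $f_n$ must be independent of $x_i$ for $i\le n-2$, and the supremum condition does not force this when, e.g., $f_{n-1}=1$. So your reading ``each $f_i$'' is the right one, rather than only $i<n$ as the statement's quantifier suggests.

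The forward direction stands as you wrote it. The Lemma is derived from the full definition, and the ``only if'' half of the Remark is valid.
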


\begin{remark}
This reduction makes gluing along coordinate $x_i$ obviously purely local in labels $f_i$ and $f_{i+1}$,
see Remark~\ref{rem:formal-gluing-label}.
Thus, gluing is often very explicit and easy in those particular cases.
\end{remark}

The above constraint reduces the allowed combinations and makes it possible to classify all combinations.

\begin{lemma}
A formal cube $(f_i)$ is degenerable if and only if for every $0\le i<n$ the relation of $f_i$ and $f_{i+1}$ is one of the following:
\begin{enumerate}[(i)]
    \item If $f_i=1$, then $f_{i+1}$ is an arbitrary function in the variables $x_{i+1}$ and $x_i$,
	\item If $f_i\in \{0,x_{i-1},\neg x_{i-1}\}$, then $f_{i+1}\in \{0,1,x_{i+1},\neg x_{i+1}\}$,
    \item If $f_i\in \{x_i, x_i\vee x_{i-1}, x_i \vee \neg x_{i-1}\}$, then $f_{i+1}\in \{1, \neg x_i, \neg x_i \vee x_{i+1}, \neg x_i \vee \neg x_{i+1}\}$,
	\item If $f_i \in \{\neg x_i, \neg x_i\vee x_{i-1}, \neg x_{i}\vee \neg x_{i-1} \}$, then $f_{i+1}\in \{1,x_i, x_i\vee x_{i+1}, x_i \vee \neg x_{i+1}\}$,
    \item If $f_i\in \{x_i\wedge x_{i-1},x_i\wedge\neg x_{i-1}, \neg x_i\wedge x_{i-1}, \neg x_i\wedge \neg x_{i-1}, x_i\iff x_{i-1}, \neg(x_i\iff x_{i-1})\}$, then $f_{i+1}=1$.
\end{enumerate}
\end{lemma}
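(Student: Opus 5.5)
By the preceding corollary, a formal cube is degenerable if and only if each $f_i$ depends only on $(x_{i-1},x_i)$ and each $f_i\vee f_{i+1}$ is independent of $x_i$. Since these conditions are local in consecutive pairs, it suffices to fix $0\le i<n$ and decide, for each of the 16 possibilities for $f_i(x_{i-1},x_i)$, which functions $g=f_{i+1}(x_i,x_{i+1})$ make $f_i\vee g$ independent of $x_i$. The plan is a direct case analysis on $f_i$, sorted by the set of values of $x_i$ (for fixed $x_{i-1}$) at which $f_i$ vanishes. The boundary conventions $x_{-1}=1$ and $x_n=0$ only shrink the lists in the edge cases, so the same classification applies there.

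The key reformulation is the following. Write $g(x_i,x_{i+1})$ and fix $x_{i-1}=a$ and $x_{i+1}=b$. The condition says $f_i(a,0)\vee g(0,b)=f_i(a,1)\vee g(1,b)$ for all $a,b$. For fixed $a$ there are three kinds of behaviour.
\begin{itemize}
\item If $f_i(a,\cdot)\equiv 1$, there is no constraint.
\item If $f_i(a,\cdot)\equiv 0$, we need $g(0,b)=g(1,b)$.
\item If $f_i(a,\cdot)$ is $1$ at exactly one value $x_i=c$, we need $g(1-c,b)=1$ for all $b$, i.e.\ $g\ge [x_i=1-c]$.
\end{itemize}
Taking the conjunction over $a\in\{0,1\}$ gives the full constraint on $g$.

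Next we sort the 16 functions by their pair of behaviours for $a=0,1$ (each behaviour being ``$1$'', ``$0$'', ``$x_i$'' or ``$\neg x_i$'').
\begin{itemize}
\item $f_i=1$: no constraint, which is case (i).
\item $f_i\in\{0,x_{i-1},\neg x_{i-1}\}$: the behaviours are constants, at least one equal to $0$. So $g$ must be independent of $x_i$, giving $g\in\{0,1,x_{i+1},\neg x_{i+1}\}$, which is case (ii).
\item $f_i\in\{x_i,\,x_i\vee x_{i-1},\,x_i\vee\neg x_{i-1}\}$: one behaviour is $x_i$ and the other is $x_i$ or $1$. The constraint is $g\ge\neg x_i$, i.e.\ $g=\neg x_i\vee h(x_{i+1})$ for an arbitrary $h$. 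This yields exactly the four functions in (iii).
\item Case (iv) is symmetric to (iii), with $g\ge x_i$.
\item The remaining six functions (the four conjunctions and the two (non-)equivalences) are exactly those where the behaviours are $x_i$ and $\neg x_i$ (the equivalences), or one of $x_i,\neg x_i$ together with $0$ (the conjunctions). In the first situation $g\ge x_i$ and $g\ge\neg x_i$, forcing $g=1$. In the second, $g\ge[x_i=1-c]$ combined with $g$ independent of $x_i$ again forces $g=1$. This is case (v).
\end{itemize}
One checks that these groups partition all 16 functions, and that in each case the listed $g$ are both necessary and sufficient.

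The main obstacle is bookkeeping: making sure the five groups exhaust the 16 functions, and that the converse direction (each listed $g$ actually works) holds. Both follow from the same pointwise reformulation, so the proof is a finite verification organised by the behaviour table above.
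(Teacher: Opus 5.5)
Your proof is correct and takes essentially the same route as the paper: a case analysis over the sixteen possible $f_i(x_{i-1},x_i)$, grouped into the same five classes, determining which $f_{i+1}$ make $f_i\vee f_{i+1}$ independent of $x_i$. Your per-$x_{i-1}$ behaviour table is a tidier organisation, since it gives necessity and sufficiency at once. It also treats the four conjunctions in case (v) more cleanly than the paper's contradiction argument, whose opening claim (that both values are attained for each value of $x_i$) is literally true only for the two equivalences.
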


\begin{proof}
It is straightforward to check that each of these combinations of $f_i$ and $f_{i+1}$ yields a degenerable cube,
i.e., $f_i\vee f_{i+1}$ is independent of $x_i$.
So it remains to show that these are precisely the cases where $f_i\vee f_{i+1}$ is independent of $x_i$.
\begin{enumerate}[(i)]
	\item The first case is clear.
	\item This means that $f_i$ is independent of $x_i$ and not constant $1$, so let us assume without loss of generality that $f_i(0,x_i)=0$.
		Now the $x_i$ independence of $(f_i\vee f_{i+1})(0,x_i,x_{i+1})=f_{i+1}(x_{i},x_{i+1})$ directly implies $x_i$-independence of $f_{i+1}$.
	\item In this case for every choice of $x_{i-1}$ there exists an $x_i$ such that $f_{i}(x_{i-1},x_i)=1$.
        This implies that the same is true for $f_{i}\vee f_{i+1}$. But by independence of $x_i$ this implies that $f_i \vee f_{i+1}=1$.
		Moreover, in each of the formulas for $f_i$, there is $\eps\in\{0,1\}$ such that $f_{i}(\eps,0)=0$.
		Hence $f_{i+1}$ has to be $1$ whenever $x_i=0$, resulting in this family of formulas.
    \item This follows from the third case by switching $x_i$ and $\neg x_i$.
	\item The last case achieves for both values of $x_{i}$ both values $1$ and $0$, depending on $x_{i-1}$.
	Now assume $f_{i+1}$ were not the constant $1$-function.
	Without loss of generality (flip $x_i$ or $x_{i+1}$ to $\neg x_i$ resp. $\neg x_{i+1}$ if necessary),
	let $f_{i+1}(0,0)=0$.
	Now choose $x_{i-1}$ in such a way that $f_i(x_{i-1},0)=0$ and accordingly $f_i(x_{i-1},1)=1$ (again without loss of generality this happens for $x_{i-1}=0$).
 	Then
	\[(f_i\vee f_{i+1})(0,0,0)=0 \quad\text{and}\quad (f_i\vee f_{i+1})(0,1,0)=1\]
	contradicting the independence of $x_i$. Thus the last case is clear. \qedhere
\end{enumerate}
\end{proof}

Therefore, given a degenerable cube, we see that one can always assume (after inductively applying flips to the cube, which act as negation on the corresponding $x_j$) that in $f_i$,
the $i$-th coordinate appears as $x_i$ and the ($i-1$)-st coordinate appears as $\neg x_{i-1}$,
apart from the case where $f_i = (x_i\iff x_{i-1})$ or $f_i = \neg (x_i\iff x_{i-1})$.
But here we just make the choice of $x_i\iff x_{i-1}$ being the canonical one.
This choice implies that we have a very strong reduced normal form of degenerable cubes because we may eliminate half of the expressions.
Thus we have proven the following proposition.

\begin{proposition}\label{prop:class-formal-degenerable}
Up to flips, a degenerable formal cube consists of a unique%
\footnote{up to the endpoints $f_0$ and $f_n$, where the convention $x_{-1} = 1$ and $x_n = 0$ identifies some label functions}
sequence $(f_i)_{i=0}^n$ of label functions
\[
    f_i\in\{0,1,x_i, \neg x_{i-1}, x_i \wedge \neg x_{i-1}, x_i\vee \neg x_{i-1}, x_i\iff x_{i-1}\}
\]
that fulfill the following recursion.
\begin{enumerate}[(i)]
    \item If $f_i=1$, then $f_{i+1}$ can be chosen arbitrarily,
	\item If $f_i\in \{0,\neg x_{i-1}\}$, then $f_{i+1}\in \{0,1,x_{i+1}\}$,
    \item If $f_i\in \{x_i, x_i \vee \neg x_{i-1}\}$, then $f_{i+1}\in \{1, \neg x_i, \neg x_i \vee x_{i+1}\}$,
	\item If $f_i\in \{x_i\wedge \neg x_{i-1}, x_i\iff x_{i-1}\}$, then $f_{i+1}=1$.
\end{enumerate}
\end{proposition}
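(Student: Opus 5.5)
The plan is to derive the proposition from the preceding lemma, which lists the admissible pairs $(f_i,f_{i+1})$, by normalizing a degenerable cube one coordinate at a time with flips. Two facts drive everything.

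\textbf{Effect of a flip.} A flip $\sigma_j$ replaces $x_j$ by $\neg x_j$. By the lemma, $f_k$ depends only on $x_{k-1},x_k$. So the only label functions affected by $\sigma_j$ are $f_j$ and $f_{j+1}$. Flips commute and preserve degenerability, so we may choose, independently for each coordinate $j=0,\dots,n-1$, whether to flip it.

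\textbf{The rule.} The goal is that $x_j$ appears positively in $f_j$ and negatively in $f_{j+1}$.
\begin{itemize}
\item If $f_j$ depends on $x_j$ and is not of $\iff$-type, flip $x_j$ exactly when it appears negated in $f_j$.
\item Otherwise, if $f_{j+1}$ depends on $x_j$ and is not of $\iff$-type, flip $x_j$ exactly when it appears positively in $f_{j+1}$.
\item If the only dependence on $x_j$ is through an $\iff$/xor label, flip so that this label becomes $x_{j+1}\iff x_j$ resp.\ $x_j\iff x_{j-1}$. Flipping one variable of an xor-type expression exchanges $\iff$ and $\neg(\iff)$, and we use this to reach $\iff$.
\item If neither $f_j$ nor $f_{j+1}$ depends on $x_j$, do nothing; the flip acts trivially there.
\end{itemize}

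\textbf{Consistency.} The main point is that the two requirements on $x_j$ never conflict. I would check this with the case list of the lemma.
\begin{itemize}
\item In case (iii), $f_j$ contains $x_j$ positively, and the lemma forces $f_{j+1}\in\{1,\neg x_j,\neg x_j\vee x_{j+1},\neg x_j\vee\neg x_{j+1}\}$. So $x_j$ automatically appears negated in $f_{j+1}$.
\item Case (iv) is case (iii) after flipping $x_j$.
\item In case (ii), $f_j$ does not depend on $x_j$ and $f_{j+1}\in\{0,1,x_{j+1},\neg x_{j+1}\}$ does not depend on $x_j$ either. Hence the label $\neg x_{j}$ occurring in $f_{j+1}$ as ``$\neg x_{i-1}$'' can only arise when $f_j=1$ (case (i)).
\item In case (i), $x_j$ is free from the point of view of $f_j$ and is fixed solely by $f_{j+1}$.
\end{itemize}
The key observation is that whenever $f_{i}$ depends on both $x_{i-1}$ and $x_i$ in a conjunctive or $\iff$ way (case (v)), necessarily $f_{i-1}=1$ and $f_{i+1}=1$. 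Therefore both coordinates involved are governed by $f_i$ alone, and we may normalize to $x_i\wedge\neg x_{i-1}$ or $x_i\iff x_{i-1}$ without conflict. The same applies to $x_i\vee\neg x_{i-1}$, whose left neighbour must lie in case (i) or (iii).

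\textbf{Reading off the statement.} After normalization, every $f_i$ lies in the seven-element list. Translating the lemma's cases (i)--(v) under the normalization gives exactly the recursion (i)--(iv) of the proposition: case (iv) of the lemma merges into (iii), and the $\iff$/conjunction cases merge into (iv).

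\textbf{Uniqueness.} The normalized form is unique up to flips of coordinates on which nothing depends. Any flip of a coordinate on which some $f_j$ depends changes the sign pattern and leaves the normal form. The only remaining ambiguity is at $f_0$ and $f_n$, where $x_{-1}=1$ and $x_n=0$ identify, e.g., $\neg x_{-1}$ with $0$; this accounts for the footnote.

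I expect the main obstacle to be this consistency bookkeeping: showing that the local rules for each coordinate never demand opposite flips. The $\iff$ case needs special care, since there a flip changes the form of the expression rather than just its sign.
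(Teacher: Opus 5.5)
Your proposal is correct and follows the paper's own argument: starting from the preceding lemma's list of admissible pairs $(f_i,f_{i+1})$, you apply flips coordinate by coordinate so that $x_i$ appears positively in $f_i$ and $x_{i-1}$ negatively, and you take $x_i\iff x_{i-1}$ as the canonical form in the $\iff$ case; your consistency check simply spells out what the paper leaves implicit. One refinement: in the $\iff$ case, flip only one of the two variables, because flipping both leaves an xor label unchanged. Flipping both also preserves the normal form, so the label sequence is unique even though the flip that produces it is not.
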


\begin{center}
\begin{tikzpicture}[
    font=\small,
    line cap=round,
    line join=round,
    state/.style={
        draw=gray!55,
        rounded corners=2pt,
        fill=white,
        inner xsep=4pt,
        inner ysep=3pt
    },
    one/.style={
        state,
        circle,
        minimum size=9mm,
        inner sep=1pt
    },
    transition/.style={
        ->,
        >=Stealth,
        line width=0.6pt
    },
    bidirectional/.style={
        <->,
        >=Stealth,
        line width=0.6pt
    },
    universal/.style={
        bidirectional,
        draw=gray!60,
        line width=0.55pt
    },
scale=0.8
]

\node[state] (iff)  at (-3.2, 3.0)
    {$x_i\iff x_{i-1}$};
\node[state] (and)  at ( 3.2, 3.0)
    {$x_i\wedge\neg x_{i-1}$};
\node[one]   (one)  at ( 0.0, 1.7) {$1$};

\node[state] (zero) at (-3.5, 0.0) {$0$};
\node[state] (neg)  at ( 3.5, 0.0)
    {$\neg x_{i-1}$};
\node[state] (x)    at (-1.8,-2.0) {$x_i$};
\node[state] (vee)  at ( 1.8,-2.0)
    {$x_i\vee\neg x_{i-1}$};

\draw[universal] (one.150) -- (iff.south east);
\draw[universal] (one.30)  -- (and.south west);
\draw[universal] (one.205) -- (zero.north east);
\draw[universal] (one.335) -- (neg.north west);
\draw[universal] (one.235) -- (x.north east);
\draw[universal] (one.305) -- (vee.north west);

\path[transition] (one)
    edge[loop above,looseness=5] (one);
\path[transition] (zero)
    edge[loop left,looseness=5] (zero);
\path[transition] (vee)
    edge[loop right,looseness=5] (vee);

\draw[transition]
    (zero.south east) -- (x.north west);
\draw[transition]
    (x.east) -- (vee.west);
\draw[transition]
    (vee.north east) -- (neg.south west);
\draw[bidirectional]
    (x.north east) -- (neg.south west);

\draw[transition]
    (neg.south east)
    .. controls (4.3,-3.25) and (-4.3,-3.25) ..
    (zero.south west);
\end{tikzpicture}
\end{center}

Now let us remark that many of the above combinations directly imply degeneracy of the cube, since then some $x_j$ does not appear.
For example, if $f_0 = 0$, then by the recursion condition, $f_1$ doesn't depend on $x_0$ as well,
and so the cube is already degenerate, since no other $f_i$ for $i\geq 2$ can depend on $x_0$.
So the non-degenerate degenerable formal cubes have an even simpler description.

\begin{proposition}\label{prop:class-formal-degenerable-non-degenerate}
The non-degenerate degenerable formal cubes are, up to flips, precisely the ones where the recursion is sharpened to
\begin{enumerate}[(i)]
    \item We have $f_0\in\{1,x_0\vee\neg x_{-1}\}$,
    \item If $f_i=1$, then $f_{i+1}\in \{x_{i+1} \wedge \neg x_{i}, x_{i+1}\vee \neg x_{i}, x_{i+1}\iff x_{i}\}$,
	\item If $f_i\in \{x_i \vee \neg x_{i-1}\}$, then $f_{i+1}\in \{1, x_{i+1}\vee\neg x_i\}$,
	\item If $f_i\in \{x_i\wedge \neg x_{i-1}, x_i\iff x_{i-1}\}$, then $f_{i+1}=1$,
	\item We have $f_n\in\{1,x_n\vee\neg x_{n-1}\}$.
\end{enumerate}
In particular, the cases $f_i\in\{0,x_i,\neg x_{i-1}\}$ do not occur
(except on the endpoints where $x_0 = x_0\vee\neg x_{-1}$ and $\neg x_{n-1} = x_n\vee\neg x_{n-1}$).
\end{proposition}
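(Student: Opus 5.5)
The plan is to start from the normal form of Proposition~\ref{prop:class-formal-degenerable}. A cube $X$ is degenerate along direction $j$ if and only if no label function depends on $x_j$. By the lemma that $f_i$ depends only on $x_{i-1},x_i$, the only labels that can involve $x_j$ are $f_j$ and $f_{j+1}$. So non-degeneracy is equivalent to the following condition: for every $0\le j<n$, at least one of $f_j$, $f_{j+1}$ genuinely depends on $x_j$. With this reformulation, I would walk through the recursion of Proposition~\ref{prop:class-formal-degenerable} and discard every transition after which some $x_j$ can never appear.

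First I would rule out the states $0$, $x_i$ and $\neg x_{i-1}$ in the interior. If $f_i\in\{0,\neg x_{i-1}\}$, then $f_i$ is free of $x_i$, and the recursion forces $f_{i+1}\in\{0,1,x_{i+1}\}$, which is also free of $x_i$. Hence $x_i$ does not appear and the cube is degenerate along direction $i$ (for $i\le n-1$). Symmetrically, if $f_i=x_i$ with $i\ge1$, then $f_i$ is free of $x_{i-1}$. The state of $f_{i-1}$ must allow $x_i$ as successor, so by the recursion $f_{i-1}\in\{0,\neg x_{i-2}\}$ (state (ii)). This state is itself free of $x_{i-1}$, so direction $i-1$ is missing. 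Together with the previous case this excludes $x_i$ unless $i=0$, where $x_0=x_0\vee\neg x_{-1}$.

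At the endpoints I would use the conventions $x_{-1}=1$ and $x_n=0$. For $f_0$ this leaves only $1$, $x_0$ and $0$ (with $x_0\wedge\neg x_{-1}=0$, $x_0\iff x_{-1}=x_0$, $\neg x_{-1}=0$), and $0$ is excluded by the argument above. Similarly for $f_n$ we have $\neg x_{n-1}=x_n\vee\neg x_{n-1}$, while $x_n=0$ and $x_n\wedge\neg x_{n-1}=0$. This gives clauses (i) and (v).

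Next I would sharpen the transitions. From $f_i=1$, the successor $f_{i+1}$ must depend on $x_i$, because $f_i$ does not. The successor is not $0$, $x_{i+1}$ or $\neg x_i$, since these are excluded in the interior (and at the right endpoint $\neg x_{n-1}$ is the allowed form). It is also not $1$, since it must depend on $x_i$. This leaves the three listed options. From $f_i=x_i\vee\neg x_{i-1}$ the recursion gives successors $\{1,\neg x_i,\neg x_i\vee x_{i+1}\}$ in unflipped form. After the normalising flip these become $1$, $x_{i+1}\vee\neg x_i$ and $\neg x_i$, and the last is excluded. Case (iv) is unchanged. For the converse I would check that any sequence obeying the sharpened recursion is non-degenerate. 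Each transition $f_i\to f_{i+1}$ listed has either $f_i$ or $f_{i+1}$ containing $x_i$: if $f_i=1$, the successor contains $x_i$, and otherwise $f_i$ itself contains $x_i$. Degenerability is inherited from Proposition~\ref{prop:class-formal-degenerable}.

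The main obstacle is bookkeeping of the flip normalisation and of the endpoint identifications. The normal form chooses signs inductively, so I must make sure that excluding $\neg x_i$ as a successor after a flip is consistent with the normalised names. I would handle this by arguing on dependence, which is flip-invariant, rather than on literal formulas.
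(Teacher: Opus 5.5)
Your proposal is correct and takes essentially the same route as the paper. The paper only sketches this result, by observing that the excluded transitions leave some coordinate $x_j$ absent from every label function; your reformulation of non-degeneracy as ``$x_j$ occurs in $f_j$ or in $f_{j+1}$'' is that argument made systematic.

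There is one small slip. When you exclude $f_i=x_i$, the predecessor can also be $f_{i-1}=1$, which allows any successor, not only $f_{i-1}\in\{0,\neg x_{i-2}\}$. Since $1$ is also free of $x_{i-1}$, your conclusion still holds, and the same argument with $x_n=0$ is what rules out $f_n=0$, a case your ``$i\le n-1$'' argument does not cover.
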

\begin{center}
\begin{tikzpicture}[
    font=\small,
    line cap=round,
    line join=round,
    state/.style={
        draw=gray!55,
        rounded corners=2pt,
        fill=white,
        inner xsep=4pt,
        inner ysep=3pt
    },
    endpoint/.style={
        state,
        fill=gray!12
    },
    one/.style={
        endpoint,
        circle,
        minimum size=9mm,
        inner sep=1pt
    },
    transition/.style={
        <->,
        >=Stealth,
        line width=0.6pt
    },
    loop/.style={
        ->,
        >=Stealth,
        line width=0.6pt
    },
scale=0.8
]

\node[state] (iff) at (-3,1.8)
    {$x_i\iff x_{i-1}$};
\node[state] (and) at (3,1.8)
    {$x_i\wedge\neg x_{i-1}$};
\node[one] (one) at (0,0.8) {$1$};
\node[endpoint] (vee) at (0,-1.5)
    {$x_i\vee\neg x_{i-1}$};

\draw[transition]
    (one.155) -- (iff.south east);
\draw[transition]
    (one.25) -- (and.south west);
\draw[transition]
    (one.south) -- (vee.north);

\path[loop]
    (vee) edge[loop right,looseness=5] (vee);
\end{tikzpicture}
\end{center}

\begin{example}
The cubes $C_n$ and $D_n$ from Example~\ref{ex:Cn-Dn} are non-degenerate degenerable formal cubes
fulfilling the normal form.
\end{example}

The normal form also implies the following description of non-degenerate degenerable formal cubes.

\begin{corollary}\label{cor:class-formal-degenerable-non-degenerate-interval}
A non-degenerate degenerable formal $n$-cube consists of a unique partition of $\{0,\dots, n\}$ into nonempty intervals $I_1,\ldots, I_k$
such that, up to flips, for each interval $I=\{a,\dots, b\}$ we have that
\begin{itemize}
\item if $a = 0$, then either
\begin{itemize}
    \item $f_i = x_i\vee\neg x_{i-1}$ for all $0\leq i\leq b$
    \item or $f_0 = 1$ and we have that
    \begin{itemize}
    	\item either $b=1$ and $f_1\in\{x_1\wedge\neg x_0, x_0\iff x_1\}$
        \item or $b\geq 1$ and $f_i=x_i\vee \neg x_{i-1}$ for all $0<i\le b$.
    \end{itemize}
\end{itemize}
\item and if $a > 0$, then we have that $f_a = 1$ and
\begin{itemize}
	\item either $b=a+1$ and $f_b\in\{x_b\wedge\neg x_{a}, x_a\iff x_b\}$
    \item or $b\geq a+1$ and $f_i=x_i\vee \neg x_{i-1}$ for all $a<i\le b$
    \item or $a = b = n$ and $f_n = 1$.
\end{itemize}
\end{itemize}
\end{corollary}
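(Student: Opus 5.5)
The plan is to read Corollary~\ref{cor:class-formal-degenerable-non-degenerate-interval} off the sharpened recursion of Proposition~\ref{prop:class-formal-degenerable-non-degenerate}. Concretely, we regard the normal-form sequence $(f_0,\dots,f_n)$ as a walk in the small transition graph drawn after that proposition, and cut the walk at every occurrence of the label function $1$.

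First we define the partition. Fix a degenerable non-degenerate cube in normal form, which we may assume after applying flips. Let $a_1<\dots<a_k$ be the indices $a>0$ with $f_a=1$, and put $a_0=0$. The intervals are
\[
I_j=\{a_{j-1},\dots,a_j-1\},
\]
with the final interval running up to $n$. Since the normal form is unique up to the endpoint ambiguity noted in the footnote to Proposition~\ref{prop:class-formal-degenerable}, the set of positions of $1$'s is determined by the cube. Hence the partition is unique.

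Next we describe the pattern inside each interval $I=\{a,\dots,b\}$, using the recursion of Proposition~\ref{prop:class-formal-degenerable-non-degenerate}. If $a>0$, then $f_a=1$ by construction. We then split according to $f_{a+1}$:
\begin{itemize}
\item If $f_{a+1}\in\{x_{a+1}\wedge\neg x_a,\ x_{a+1}\iff x_a\}$, rule (iv) forces $f_{a+2}=1$, so $b=a+1$.
\item If $f_{a+1}=x_{a+1}\vee\neg x_a$, rule (iii) lets the walk either continue with the same type $x_i\vee\neg x_{i-1}$ or return to $1$. The first return to $1$ marks $b+1$, so $f_i=x_i\vee\neg x_{i-1}$ for all $a<i\le b$.
\item If $a=n$, there is no $f_{a+1}$. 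This gives the degenerate interval $\{n\}$ with $f_n=1$.
\end{itemize}
For $a=0$, rule (i) gives $f_0\in\{1,\ x_0\vee\neg x_{-1}\}$. If $f_0=1$, the same case analysis applies verbatim. If $f_0=x_0\vee\neg x_{-1}$, rule (iii) continues this type until the first $1$, giving $f_i=x_i\vee\neg x_{i-1}$ for $0\le i\le b$. The endpoint rule (v) is compatible with both endings, because $x_n\vee\neg x_{n-1}$ and $1$ are exactly the allowed values of $f_n$.

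The main obstacle is bookkeeping at the boundaries rather than any real mathematics. One point is that with $x_{-1}=1$ we have $x_0\vee\neg x_{-1}=x_0$, and with $x_n=0$ we have $x_n\vee\neg x_{n-1}=\neg x_{n-1}$; these must not be mistaken for excluded cases. The other is that the uniqueness claim has to be stated modulo this endpoint identification. I would handle both by quoting the footnote to Proposition~\ref{prop:class-formal-degenerable} directly. For the converse, each interval pattern, concatenated with $1$'s at the cut points, satisfies the recursion of Proposition~\ref{prop:class-formal-degenerable-non-degenerate} by inspection, so every such partition does arise from a non-degenerate degenerable cube.
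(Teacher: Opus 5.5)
Your proposal is correct and takes essentially the paper's route: the paper gives no separate proof and reads the corollary directly off the sharpened recursion (equivalently, the transition graph) of Proposition~\ref{prop:class-formal-degenerable-non-degenerate}, which is exactly what you do by cutting the label sequence at the constant-$1$ label functions. For uniqueness, the more precise reason is that every admissible interval pattern has the label function $1$ nowhere except possibly at its left endpoint, and always there when $a>0$; hence any partition as in the statement must cut exactly at the flip-invariant indices $a>0$ with $f_a=1$.
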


\begin{corollary}
If a degenerable $n$-cube has only up to $n-2$-tuples as labels, then it is degenerate.
\end{corollary}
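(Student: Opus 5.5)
We prove the contrapositive: a \emph{non-degenerate} degenerable $n$-cube $X=(f_i)_{i=0}^n$ has a point $x\in\{0,1\}^n$ at which at least $n$ of the $n+1$ label functions equal $1$. Such a value $X(x)$ has at least $n$ elements, i.e. it is at least an $(n-1)$-tuple, which is more than the hypothesis allows. Flips only permute the points of the cube, so they do not change the multiset of values $X(x)$. We may therefore pass to the normal form of Corollary~\ref{cor:class-formal-degenerable-non-degenerate-interval}: a partition of $\{0,\dots,n\}$ into intervals $I_1,\dots,I_k$, each of one of the listed shapes.

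\textbf{Step 1 (the intervals decouple).} For an interval $I=\{a,\dots,b\}$ the functions $f_a,\dots,f_b$ depend only on the variables $x_{a-1},\dots,x_b$. If $a>0$, then $f_a=1$ by the classification, so these functions depend only on $x_a,\dots,x_b$. Hence different intervals involve pairwise disjoint sets of coordinates, with the conventions $x_{-1}=1$ and $x_n=0$ being fixed constants. So $x$ can be chosen independently on each interval, and it suffices to maximise the number of labels equal to $1$ interval by interval.

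\textbf{Step 2 (case check per interval).} We go through the shapes of the corollary.
\begin{itemize}
\item $f_a=1$ and $f_{a+1}=x_{a+1}\wedge\neg x_a$: take $x_a=0$, $x_{a+1}=1$.
\item $f_a=1$ and $f_{a+1}=(x_a\iff x_{a+1})$: take $x_a=x_{a+1}=0$.
\item $f_a=1$ followed by a chain $f_i=x_i\vee\neg x_{i-1}$ for $a<i\le b$: take $x_a=\dots=x_b=0$. Then $\neg x_{i-1}=1$ makes every $f_i$ equal to $1$.
\item The singleton $\{n\}$ with $f_n=1$: there is nothing to choose.
\end{itemize}
In all of these cases every label in the interval is $1$. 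The only remaining case is the initial chain $f_i=x_i\vee\neg x_{i-1}$ for $0\le i\le b$. Here we take $x_0=\dots=x_b=1$ (for $b=n$, this means $x_0,\dots,x_{n-1}$). Then $f_i=1$ for every $i<n$, since $x_i=1$ there. Only $f_n=x_n\vee\neg x_{n-1}=\neg x_{n-1}$ can fail, and only when $b=n$. So at most one label is missed.

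\textbf{Step 3 (conclusion).} Combining the choices from Step~2 gives a point $x$ whose value contains all labels except at most one, i.e. $|X(x)|\ge n$. This contradicts the hypothesis that all labels are at most $(n-2)$-tuples, so the cube must be degenerate.

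The main obstacle is Step~1. One has to verify that, after normalising by flips, the intervals really interact through no shared variable: the coupling variable $x_{a-1}$ at each interval boundary drops out because $f_a=1$. Once that is established, Step~2 is a finite case check over the normal forms.
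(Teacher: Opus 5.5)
Your proof is correct and follows the paper's route: you argue by contraposition via the interval normal form of Corollary~\ref{cor:class-formal-degenerable-non-degenerate-interval} and choose a point of $\{0,1\}^n$ at which almost all label functions equal $1$; your decoupling of the intervals makes explicit what the paper calls constructing the point ``inductively''. Your count (at most one label missed, so $\lvert X(x)\rvert\ge n$) is sharper than the paper's bound of $n-1$, and either bound suffices to exceed the $(n-2)$-tuples allowed by the hypothesis.
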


\begin{proof}
This follows by contraposition from Corollary~\ref{cor:class-formal-degenerable-non-degenerate-interval}:
if $(f_i)$ constitute a non-degenerate degenerable formal cube,
one can inductively construct an $n$-tuple $x\in\{0,1\}^n$ such that $f_i(x) = 1$ for at least ($n-1$)-many $i$.
\end{proof}

\subsection{Generation of Degenerable Cubes}

The previous description allows us to see the following uniqueness result.

\begin{proposition}\label{prop:unique-Cn}
For every $n\in\N$ there exists, up to flips, exactly one degenerable formal $n$-cube without a full label that has every $n$-tuple precisely once as a label.
\end{proposition}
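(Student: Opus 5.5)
The plan is to prove existence by checking that $C_n$ from Example~\ref{ex:Cn-Dn} has the required property. Uniqueness will follow from the normal form of Corollary~\ref{cor:class-formal-degenerable-non-degenerate-interval}: every other non-degenerate degenerable cube in normal form has a point carrying the full label.

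\emph{Existence.} For $C_n$ we have $f_i=x_i\vee\neg x_{i-1}$, so the label $i$ is missing at $x$ if and only if $x_{i-1}=1$ and $x_i=0$. Consider the extended sequence $1=x_{-1},x_0,\dots,x_{n-1},x_n=0$. It starts at $1$ and ends at $0$, so it has at least one descent $x_{i-1}=1$, $x_i=0$. Hence no point carries the full label $[n]$. A label is an $n$-tuple exactly when the extended sequence has exactly one descent. This happens precisely for the $n+1$ points $x=1^k0^{n-k}$, $0\le k\le n$, and the point $1^k0^{n-k}$ misses exactly the index $k$. So every $n$-tuple occurs exactly once. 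This property is invariant under flips, since flips permute the points of $\{0,1\}^n$.

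\emph{Reduction to the normal form.} Let $X$ be degenerable, with every $n$-tuple occurring exactly once and no full label. First I would show $X$ is non-degenerate. If $X$ were degenerate along $x_j$, then every label would be attained at both $x$ and $\sigma_j x$, so each $n$-tuple would occur an even number of times, a contradiction. Since the conditions are flip-invariant, Corollary~\ref{cor:class-formal-degenerable-non-degenerate-interval} lets me assume $X$ is in normal form, given by a partition of $\{0,\dots,n\}$ into intervals $I_1,\dots,I_k$.

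\emph{Uniqueness: forcing a full label.} The key observation concerns which coordinates each interval $I=\{a,\dots,b\}$ uses. If $a>0$, or if $a=0$ and $f_0=1$, then $f_a=1$, and the remaining $f_i$ with $a<i\le b$ involve only $x_a,\dots,x_b$ (with $x_n=0$ fixed). If $a=0$ and the interval is of chain type starting with $f_0=x_0\vee\neg x_{-1}$, it uses $x_0,\dots,x_b$. The coordinate sets belonging to distinct intervals are therefore disjoint, so I can choose coordinates interval by interval to make every $f_i$ equal to $1$:
\begin{itemize}
\item For a chain interval with $b<n$, set $x_a=\dots=x_b=1$; then each $f_i=x_i\vee\neg x_{i-1}$ is $1$, and $f_a$ is $1$ either by assumption or because $x_0=1$.
\item For a chain interval with $b=n$ and $f_a=1$, set $x_a=\dots=x_{n-1}=0$; then for each $a<i\le n$ we get $\neg x_{i-1}=1$.
\item For a short interval $\{a,a+1\}$, use $x_a=0,x_{a+1}=1$ for $x_{a+1}\wedge\neg x_a$, and $x_a=x_{a+1}$ for $x_{a+1}\iff x_a$.
\item For the singleton $\{n\}$, $f_n=1$ holds automatically.
\end{itemize}
The only case not covered is a single interval $\{0,\dots,n\}$ of chain type with $f_0=x_0\vee\neg x_{-1}$, because there the coordinate $x_n=0$ is fixed while $x_{-1}=1$ is forced. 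That cube is exactly $C_n$. So every other $X$ has a full label, which proves uniqueness up to flips.

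The main obstacle I expect is bookkeeping the endpoint conventions $x_{-1}=1$ and $x_n=0$ in the disjointness argument. In particular I need to check that intervals touching $0$ or $n$ do not share coordinates with their neighbours, and that the boundary identifications in the footnote to Proposition~\ref{prop:class-formal-degenerable} introduce no extra cases.
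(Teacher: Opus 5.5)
Your proposal is correct and follows the paper's proof: existence by checking $C_n$ directly, and uniqueness by first excluding degeneracy (labels would repeat), then passing to the interval normal form of Corollary~\ref{cor:class-formal-degenerable-non-degenerate-interval}, and finally exhibiting a full label in every case except the single chain $\{0,\dots,n\}$ with $f_0=x_0$. Your descent argument and interval-by-interval bookkeeping just spell out what the paper leaves as ``going through all cases''. The one endpoint case you worried about cannot occur: a short interval $\{n-1,n\}$ with $f_n=x_n\wedge\neg x_{n-1}\equiv 0$ would make the cube degenerate in $x_{n-1}$, and the label $n$ would then never appear.
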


\begin{proof}
Existence: take the cube $C_n$ from Example~\ref{ex:Cn-Dn}.
For each $A\subset\{0,\ldots,n\}$ with $\lvert A\rvert = n$ we have that the equations $f_i(x) = 1$ if and only if $i\in A$
have exactly one solution: start from $f_0 = x_0$ and inductively construct the other $x_j$.
Thus, each $n$-tuple appears exactly once in $C_n$ as a label.
But the full label doesn't appear in $C_n$ since $f_i(x) = 1$ for all $0\leq i\leq n$ doesn't have a solution:
necessarily, $x_0 = x_1 = \ldots = x_{n-1} = 1$, but then $f_n(x) = \neg x_{n-1} = 0$.

Uniqueness: assume we have constructed such a cube $(f_i)$.
In a degenerate cube, every label appears at least twice, thus $(f_i)$ isn't degenerate.
Therefore, we can apply Corollary~\ref{cor:class-formal-degenerable-non-degenerate-interval} and assume that it is of the form given there.
If the cube would contain a label function that is constant $1$, it necessarily contains the full label:
then it is possible to find a solution to the equations $f_i(x) = 1$ for $i\in I$ in every interval $I$ of the partition of $\{0,\ldots,n\}$
(going through all cases in Corollary~\ref{cor:class-formal-degenerable-non-degenerate-interval}).
Therefore, $I = \{0,\ldots,n\}$ and $f_i = x_i\vee\neg x_{i-1}$.
\end{proof}

Moreover, in the previous proposition, we can also swap the condition on the $n$-tuples for non-degeneracy of the cubes and obtain essentially the same result.

\begin{proposition}
If a degenerable formal cube is nondegenerate and has no full label then it is a flip of $C_n$.
\end{proposition}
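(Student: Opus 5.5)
The plan is to reuse the uniqueness half of Proposition~\ref{prop:unique-Cn}, which only needed non-degeneracy and the absence of the full label. Since the cube is non-degenerate and degenerable, Corollary~\ref{cor:class-formal-degenerable-non-degenerate-interval} applies. After flips, the cube is described by a partition of $\{0,\dots,n\}$ into intervals $I_1,\dots,I_k$, and each interval carries one of the listed label patterns. The goal is to show that the partition has a single block $I=\{0,\dots,n\}$ with $f_i=x_i\vee\neg x_{i-1}$ for all $i$, which is exactly $C_n$.

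The key step is to show that any constant label $f_a=1$ produces the full label. Label functions attached to different intervals involve disjoint sets of coordinates. Indeed, the interval $\{a,\dots,b\}$ only uses the variables $x_{a-1},\dots,x_b$. Since $f_a=1$ at the start of each interval with $a>0$, it does not actually depend on $x_{a-1}$. So the effective variables of distinct intervals are disjoint, and the variables can be chosen interval by interval. I then check, case by case, that every interval starting with $f_a=1$ admits a point where all its labels equal $1$.
\begin{itemize}
\item If $b=a+1$ and $f_b=x_b\wedge\neg x_a$, take $x_a=0$ and $x_b=1$.
\item If $b=a+1$ and $f_b=(x_a\iff x_b)$, take $x_a=x_b$.
\item If $f_i=x_i\vee\neg x_{i-1}$ for $a<i\le b$, set all of $x_a,\dots,x_b$ equal to $1$.
\item If $a=b=n$, the label is $f_n=1$ and there is nothing to choose.
\end{itemize}
One subtlety appears in the third case when $b=n$, because then $f_n=\neg x_{n-1}$ with the convention $x_n=0$. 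Here I instead take all of $x_a,\dots,x_{n-1}$ equal to $0$. Then $f_i=\neg x_{i-1}=1$ for $i>a$, and $f_a=1$ already holds. The first interval with $f_0=1$ is handled the same way.

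The remaining case is an interval $\{0,\dots,b\}$ with $f_i=x_i\vee\neg x_{i-1}$ for all $0\le i\le b$, so $f_0=x_0$. If $b<n$, the next interval starts with $f_{b+1}=1$. Setting $x_0=\dots=x_b=1$ makes every label in this first interval equal to $1$. Combined with the later intervals this yields the full label, which is a contradiction. Therefore either $k=1$ with this pattern, or some interval starts with a constant $1$ and again the full label appears.

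The only surviving configuration is $k=1$, $I=\{0,\dots,n\}$ and $f_i=x_i\vee\neg x_{i-1}$, which is $C_n$ up to the flips used to reach the normal form. The main obstacle is the bookkeeping at the endpoints $x_{-1}=1$ and $x_n=0$: one has to make sure that the last interval can still be satisfied even though $x_n$ cannot be chosen. The choice of all zeros described above handles this.
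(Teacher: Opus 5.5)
Your proof is correct and follows the same route as the paper, which applies Corollary~\ref{cor:class-formal-degenerable-non-degenerate-interval} and repeats the uniqueness argument of Proposition~\ref{prop:unique-Cn}: a constant label $f_a=1$ lets one solve $f_i(x)=1$ interval by interval, which produces the full label, so only the single interval with $f_i=x_i\vee\neg x_{i-1}$ survives, and that is a flip of $C_n$. Your explicit case check fills in the ``going through all cases'' that the paper leaves to the reader, including the endpoint convention $x_n=0$; note that the pattern $x_b\wedge\neg x_a$ with $b=n$ cannot occur, since it would force $f_n=0$.
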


\begin{proof}
One applies Corollary~\ref{cor:class-formal-degenerable-non-degenerate-interval} and argues as in the proof of Proposition~\ref{prop:unique-Cn}
to see that $f_i = x_i\vee\neg x_{i-1}$.
\end{proof}

\begin{definition}
Define $C_n$ for $n\in\N$ to be the unique formal $n$-cube from Proposition~\ref{prop:unique-Cn},
oriented such that the value at $0$ is $\hat{0}=\{1,\ldots,n\}$, see also Example~\ref{ex:Cn-Dn}.
Define $D_n$ to be the cube $C_n$ with the label $n$ attached at every point,
i.e., $f_i^D=f_i^C$ for $i<n$ and $f_n^D=1$, see Example~\ref{ex:Cn-Dn}.
We define $D_0$ by $f_0=1$.
\end{definition}

\begin{example}
The low-dimensional $C_n$ look as follows.
\begin{center}
\hspace{-2.1cm}
\begin{tikzpicture}[
    scale=0.92,
    transform shape,
    edge/.style={draw=black,line width=0.55pt},
    hiddenedge/.style={draw=gray!55,line width=0.45pt},
    connector/.style={draw=gray!65,line width=0.40pt},
    vlabel/.style={font=\scriptsize,fill=white,inner sep=1pt},
    head/.style={font=\small\bfseries}
]

\useasboundingbox (-0.4,-0.75) rectangle (14.6,4.25);

\begin{scope}[shift={(0.35,2.10)}]
  \coordinate (A) at (0,0);
  \coordinate (B) at (1.8,0);
  \draw[edge] (A)--(B);
  \node[vlabel] at (A) {$0$};
  \node[vlabel] at (B) {$1$};
\end{scope}

\begin{scope}[shift={(3.10,1.35)}]
  \coordinate (A) at (0,0);
  \coordinate (B) at (2,0);
  \coordinate (C) at (0,2);
  \coordinate (D) at (2,2);

  \draw[edge] (A)--(B)--(D)--(C)--cycle;

  \node[vlabel] at (A) {$01$};
  \node[vlabel] at (B) {$02$};
  \node[vlabel] at (C) {$1$};
  \node[vlabel] at (D) {$12$};
\end{scope}

\begin{scope}[shift={(6.35,1.10)}]
  \coordinate (A) at (0,0);
  \coordinate (B) at (2.1,0);
  \coordinate (C) at (0,2.1);
  \coordinate (D) at (2.1,2.1);
  \coordinate (E) at (0.75,0.75);
  \coordinate (F) at (2.85,0.75);
  \coordinate (G) at (0.75,2.85);
  \coordinate (H) at (2.85,2.85);

  \draw[edge] (A)--(B)--(D)--(C)--cycle;
  \draw[edge] (E)--(F)--(H)--(G)--cycle;
  \draw[hiddenedge] (A)--(E);
  \draw[edge] (B)--(F);
  \draw[edge] (C)--(G);
  \draw[edge] (D)--(H);

  \node[vlabel] at (C) {$13$};
  \node[vlabel] at (D) {$123$};
  \node[vlabel] at (G) {$013$};
  \node[vlabel] at (H) {$023$};

  \node[vlabel] at (A) {$12$};
  \node[vlabel] at (B) {$12$};
  \node[vlabel] at (E) {$012$};
  \node[vlabel] at (F) {$02$};
\end{scope}

\begin{scope}[
    shift={(10,0.60)},
    scale=0.66,
    transform shape,
    vlabel/.style={font=\footnotesize,fill=white,inner sep=1pt}
]
  \coordinate (A) at (0,0);
  \coordinate (B) at (4.5,0);
  \coordinate (C) at (0,3.8);
  \coordinate (D) at (4.5,3.8);
  \coordinate (E) at (1.25,1.25);
  \coordinate (F) at (5.75,1.25);
  \coordinate (G) at (1.25,5.05);
  \coordinate (H) at (5.75,5.05);

  \coordinate (a) at (1.95,1.45);
  \coordinate (b) at (3.85,1.45);
  \coordinate (c) at (1.95,3.05);
  \coordinate (d) at (3.85,3.05);
  \coordinate (e) at (2.40,1.90);
  \coordinate (f) at (4.30,1.90);
  \coordinate (g) at (2.40,3.50);
  \coordinate (h) at (4.30,3.50);

  \draw[edge]       (A)--(B)--(D)--(C)--cycle;
  \draw[hiddenedge] (A)--(E);
  \draw[hiddenedge] (E)--(F);
  \draw[edge]       (F)--(H)--(G);
  \draw[hiddenedge] (G)--(E);
  \draw[edge]       (B)--(F);
  \draw[edge]       (C)--(G);
  \draw[edge]       (D)--(H);

  \draw[edge] (a)--(b)--(d)--(c)--cycle;
  \draw[edge] (e)--(f)--(h)--(g)--cycle;
  \draw[edge] (a)--(e);
  \draw[edge] (b)--(f);
  \draw[edge] (c)--(g);
  \draw[edge] (d)--(h);

  \draw[connector] (A)--(a);
  \draw[connector] (B)--(b);
  \draw[connector] (C)--(c);
  \draw[connector] (D)--(d);
  \draw[connector] (E)--(e);
  \draw[connector] (F)--(f);
  \draw[connector] (G)--(g);
  \draw[connector] (H)--(h);

  \node[vlabel] at (A) {$1234$};
  \node[vlabel] at (B) {$0234$};
  \node[vlabel] at (C) {$134$};
  \node[vlabel] at (D) {$0134$};

  \node[vlabel] at (E) {$124$};
  \node[vlabel] at (F) {$024$};
  \node[vlabel] at (G) {$124$};
  \node[vlabel] at (H) {$0124$};

  \node[vlabel] at (a) {$123$};
  \node[vlabel] at (b) {$023$};
  \node[vlabel] at (c) {$13$};
  \node[vlabel] at (d) {$013$};

  \node[vlabel] at (e) {$123$};
  \node[vlabel] at (f) {$023$};
  \node[vlabel] at (g) {$123$};
  \node[vlabel] at (h) {$0123$};
\end{scope}

\node[head] at (7.2,0) {The cubes \(C_n\)};

\end{tikzpicture}
\end{center}
And the first $D_n$ are as follows (note that $D_0$ exists, in contrast to $C_0$).

\begin{center}
\begin{tikzpicture}[
    scale=0.92,
    transform shape,
    edge/.style={draw=black,line width=0.55pt},
    hiddenedge/.style={draw=gray!55,line width=0.45pt},
    connector/.style={draw=gray!65,line width=0.40pt},
    vlabel/.style={font=\scriptsize,fill=white,inner sep=1pt},
    head/.style={font=\small\bfseries}
]

\useasboundingbox (-0.4,-0.75) rectangle (14.6,4.25);
\begin{scope}[shift={(-1.2,2.1)}]
	\coordinate (A) at (0,0);
	\node[vlabel] at (A) {$0$};
\end{scope}

\begin{scope}[shift={(0.35,2.10)}]
  \coordinate (A) at (0,0);
  \coordinate (B) at (1.8,0);
  \draw[edge] (A)--(B);
  \node[vlabel] at (A) {$01$};
  \node[vlabel] at (B) {$1$};
\end{scope}

\begin{scope}[shift={(3.10,1.35)}]
  \coordinate (A) at (0,0);
  \coordinate (B) at (2,0);
  \coordinate (C) at (0,2);
  \coordinate (D) at (2,2);

  \draw[edge] (A)--(B)--(D)--(C)--cycle;

  \node[vlabel] at (A) {$012$};
  \node[vlabel] at (B) {$02$};
  \node[vlabel] at (C) {$12$};
  \node[vlabel] at (D) {$12$};
\end{scope}

\begin{scope}[shift={(6.35,1.10)}]
  \coordinate (A) at (0,0);
  \coordinate (B) at (2.1,0);
  \coordinate (C) at (0,2.1);
  \coordinate (D) at (2.1,2.1);
  \coordinate (E) at (0.75,0.75);
  \coordinate (F) at (2.85,0.75);
  \coordinate (G) at (0.75,2.85);
  \coordinate (H) at (2.85,2.85);

  \draw[edge] (A)--(B)--(D)--(C)--cycle;
  \draw[edge] (E)--(F)--(H)--(G)--cycle;
  \draw[hiddenedge] (A)--(E);
  \draw[edge] (B)--(F);
  \draw[edge] (C)--(G);
  \draw[edge] (D)--(H);

  \node[vlabel] at (C) {$13$};
  \node[vlabel] at (D) {$123$};
  \node[vlabel] at (G) {$013$};
  \node[vlabel] at (H) {$023$};

  \node[vlabel] at (A) {$123$};
  \node[vlabel] at (B) {$123$};
  \node[vlabel] at (E) {$0123$};
  \node[vlabel] at (F) {$023$};
\end{scope}

\begin{scope}[
    shift={(10,0.60)},
    scale=0.66,
    transform shape,
    vlabel/.style={font=\footnotesize,fill=white,inner sep=1pt}
]
  \coordinate (A) at (0,0);
  \coordinate (B) at (4.5,0);
  \coordinate (C) at (0,3.8);
  \coordinate (D) at (4.5,3.8);
  \coordinate (E) at (1.25,1.25);
  \coordinate (F) at (5.75,1.25);
  \coordinate (G) at (1.25,5.05);
  \coordinate (H) at (5.75,5.05);

  \coordinate (a) at (1.95,1.45);
  \coordinate (b) at (3.85,1.45);
  \coordinate (c) at (1.95,3.05);
  \coordinate (d) at (3.85,3.05);
  \coordinate (e) at (2.40,1.90);
  \coordinate (f) at (4.30,1.90);
  \coordinate (g) at (2.40,3.50);
  \coordinate (h) at (4.30,3.50);

  \draw[edge]       (A)--(B)--(D)--(C)--cycle;
  \draw[hiddenedge] (A)--(E);
  \draw[hiddenedge] (E)--(F);
  \draw[edge]       (F)--(H)--(G);
  \draw[hiddenedge] (G)--(E);
  \draw[edge]       (B)--(F);
  \draw[edge]       (C)--(G);
  \draw[edge]       (D)--(H);

  \draw[edge] (a)--(b)--(d)--(c)--cycle;
  \draw[edge] (e)--(f)--(h)--(g)--cycle;
  \draw[edge] (a)--(e);
  \draw[edge] (b)--(f);
  \draw[edge] (c)--(g);
  \draw[edge] (d)--(h);

  \draw[connector] (A)--(a);
  \draw[connector] (B)--(b);
  \draw[connector] (C)--(c);
  \draw[connector] (D)--(d);
  \draw[connector] (E)--(e);
  \draw[connector] (F)--(f);
  \draw[connector] (G)--(g);
  \draw[connector] (H)--(h);

  \node[vlabel] at (A) {$1234$};
  \node[vlabel] at (B) {$0234$};
  \node[vlabel] at (C) {$134$};
  \node[vlabel] at (D) {$0134$};

  \node[vlabel] at (E) {$124$};
  \node[vlabel] at (F) {$024$};
  \node[vlabel] at (G) {$124$};
  \node[vlabel] at (H) {$0124$};

  \node[vlabel] at (a) {$1234$};
  \node[vlabel] at (b) {$0234$};
  \node[vlabel] at (c) {$134$};
  \node[vlabel] at (d) {$0134$};

  \node[vlabel] at (e) {$1234$};
  \node[vlabel] at (f) {$0234$};
  \node[vlabel] at (g) {$1234$};
  \node[vlabel] at (h) {$01234$};
\end{scope}

\node[head] at (7.2,-0) {The cubes \(D_n\)};

\end{tikzpicture}
\end{center}
\end{example}

\begin{remark}
Note that one has the following recursive construction of the $C_n$ and $D_n$.
\begin{enumerate}[(i)]
\item 
The $x_n=0$ and $x_n=1$ faces of $C_{n+1}$ are $C_n$ with the label $n+1$ appended everywhere and $D_n$ as opposite face.
\item 
The $x_n=0$ and $x_n=1$ faces of $D_{n+1}$ are $C_n$ with the label $n+1$ appended everywhere and $D_n$ with the label $n+1$ appended everywhere as opposite face.

\end{enumerate}
Similar recursive decompositions are possible for all degenerable cubes.
\end{remark}

The cubes $C_n$ and $D_n$ are central for the non-degenerate degenerable cubes due to the following reason.

\begin{theorem}\label{prop:C-D-glue-all}
All non-degenerate degenerable formal $n$-cubes are generated, under flips and gluings, by $C_n$ and $D_n$.
\end{theorem}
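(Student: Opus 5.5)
The plan is to work entirely with the normal form from Corollary~\ref{cor:class-formal-degenerable-non-degenerate-interval}. Up to flips, a non-degenerate degenerable $n$-cube is encoded by a partition of $\{0,\dots,n\}$ into intervals. Each interval is one of the following types: a ``$\vee$-run'' $f_i=x_i\vee\neg x_{i-1}$; a run started by a constant $f_a=1$; a short interval $\{a,a+1\}$ with $f_{a+1}\in\{x_{a+1}\wedge\neg x_a,\ x_a\iff x_{a+1}\}$; or the singleton $\{n\}$ with $f_n=1$. In this language $C_n$ is the single $\vee$-run on $\{0,\dots,n\}$, and $D_n$ is the $\vee$-run on $\{0,\dots,n-1\}$ followed by the singleton $\{n\}$. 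I would prove the theorem by induction on a complexity measure of this interval data. The measure is the number of ``breaks'' (indices $a$ with $f_a=1$) together with the number of short intervals of $\wedge$/$\iff$ type. The base cases are $C_n$ and $D_n$ themselves.

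The basic tool is the locality of gluing (Remark~\ref{rem:formal-gluing-label} and the remark following the classification). Gluing along $x_j$ changes only $f_j$ and $f_{j+1}$, and only the two half-faces matter. I would first record a small table of local gluing moves on a window $(f_j,f_{j+1})$, with all other label functions equal in the two cubes. A sample computation: take $A$ with $(f_j,f_{j+1})=(x_j\vee\neg x_{j-1},\,x_{j+1}\vee\neg x_j)$ and $B$ with $(1,\,x_{j+1}\vee\neg x_j)$. They agree on $[x_j=1]$, and gluing yields $(\neg x_j\vee\neg x_{j-1},\,1)$. Up to a flip in $x_j$, this is a $\vee$-run ending at $j$ followed by a break at $j+1$. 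So a break can be created or shifted by one position, provided both input cubes are already available.

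Analogous computations would give the remaining moves. Gluing a break-at-$a$ cube to a neighbouring cube along $x_a$ should produce the windows $(1,\,x_{a+1}\wedge\neg x_a)$ and $(1,\,x_a\iff x_{a+1})$, i.e. the short intervals. Other moves convert a leading $f_0=x_0$ into $f_0=1$, and produce the terminal singleton $f_n=1$ (which is already present in $D_n$). In each case one gluing input has strictly smaller complexity, and the other input is a cube differing from the target only inside the window. With these moves, the induction step is as follows. Given a target normal form, pick its largest break (or its rightmost short interval), and write the target as a gluing of two cubes of smaller complexity, both of which are still non-degenerate degenerable and hence covered by the induction hypothesis.

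The main obstacle I expect is bookkeeping rather than any single idea. For each move one must check that the two inputs really agree on the gluing face. This has to hold including the neighbouring label functions $f_{j-1}$ and $f_{j+2}$, which also mention $x_j$ or $x_{j+1}$ in $\vee$-runs. One must also check that the inputs lie in the normal form, possibly only after flips, so that the induction hypothesis applies. The endpoint conventions $x_{-1}=1$ and $x_n=0$ create separate cases at $a=0$ and $a=n$. My first attempt would be to handle each of these by a direct gluing of $C_n$ with $D_n$, or with a flip of one of them. If some window cannot be reached in a single gluing, I would allow two-step gluings through an intermediate degenerable but possibly degenerate cube, checking only that the final result has the required normal form.
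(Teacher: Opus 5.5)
Your sample computation is correct: with all other labels equal, the windows $(x_j\vee\neg x_{j-1},\,x_{j+1}\vee\neg x_j)$ and $(1,\,x_{j+1}\vee\neg x_j)$ agree on $[x_j=1]$ and glue to $(\neg x_j\vee\neg x_{j-1},\,1)$. The gap is in the induction you build on such moves. Your induction step says the target is a gluing of two cubes of \emph{strictly} smaller complexity. But in your own move, the input carrying the break at $j$ has the same complexity as the output, and choosing other moves cannot repair this. The only non-degenerate degenerable cubes of complexity $0$ are flips of $C_n$. Every value of a gluing is a value of one of its inputs, so the flip--gluing closure of $C_n$ never contains the full label $\{0,\dots,n\}$. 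Yet for $k<n$, the complexity-$1$ cube $D_{\{k\}}$ (a single constant $1$ at position $k$) takes the full label at the vertex $x_0=\dots=x_{k-1}=1$, $x_k=\dots=x_{n-1}=0$. So $D_{\{k\}}$ is not generated from cubes of smaller complexity, and your induction already breaks at complexity $1$. In addition, your move pushes a break from $j$ to $j+1$, i.e.\ to the right. The only break you are given sits at the rightmost position $n$, in $D_n$. The move you leave unspecified, creating $f_0=1$, is exactly where the real work lies.

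The missing idea is to transport the $1$ supplied by $D_n$ leftwards through a chain of $n-k$ gluings whose intermediates all have the same number of $1$'s. The induction measure must therefore ignore a $1$ at position $n$. This is Lemma~\ref{lem:insert_1s}. Take $k=\max(I\setminus\{n\})$ and $J=I\setminus\{k\}$, start with $D_{J\cup\{n\}}$, and glue $D_J$ to it successively along $[x_{n-1}=0],\dots,[x_k=0]$. Each step is your move read backwards, using that gluing is self-inverse up to a flip. The induction is on $|I\setminus\{n\}|$, which is smaller for both $D_J$ and $D_{J\cup\{n\}}$.

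Only after all $1$'s are in place should the $\wedge$/$\iff$ windows be inserted. A $1$ forced by a preceding $\wedge$/$\iff$ window cannot be moved, so ``largest break first'' is the wrong order otherwise. Inserting these windows is not a single gluing either; as in Lemma~\ref{lem:insert-ands-iffs}:
\begin{enumerate}
\item glue the window cube $(1,x_k\vee\neg x_{k-1},1)$ to itself along $[x_{k-1}=0]$, giving a \emph{degenerate} intermediate with $f_k=x_k$;
\item glue along $[x_k=1]$ to get $x_k\wedge\neg x_{k-1}$;
\item glue this to its $x_{k-1}$-flip along $[x_k=0]$ to get $x_k\iff x_{k-1}$.
\end{enumerate}
The last step again combines inputs of the same complexity as its output in your measure.
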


To prove this theorem we need two lemmas.

\begin{lemma}\label{lem:insert_1s}
For any subset $I\sub \{0,\dots ,n\}$ denote by $D_I$ the degenerable cube with label functions
\[
    f_i = x_i\vee \neg x_{i-1} \vee (i\in I) =
    \begin{cases}
        1,\quad & i\in I, \\
        x_i\vee\neg x_{i-1}\quad & i\notin I.
    \end{cases}
\]
i.e., it is the formula of $C_n$ but with a $1$ inserted at the coordinates in $I$.
Then $D_I$ is generated by $C_n=D_\emptyset$ and $D_n=D_{\{n\}}$ under gluing.
\end{lemma}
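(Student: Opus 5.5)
The plan is to build every $D_I$ from $C_n=D_\emptyset$ and $D_n=D_{\{n\}}$ by adding one element to $I$ at a time, and to realize each step as a single gluing along one coordinate direction $x_i$. By Remark~\ref{rem:formal-gluing-label}, gluing along $x_i$ only affects the label functions that depend on $x_i$. For the cubes $D_J$ these are only $f_i$ and $f_{i+1}$, so each step is a local check on two labels.

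\textbf{Step 1 (inserting $i$ when $i+1\in J$).} Let $i\notin J$ with $i+1\in J$ and $i<n$. I would glue $A=D_J$ with itself along the face $[x_i=0]$. The gluing condition holds trivially, and the result is $A|_{x_i=1}$ extended constantly in the $x_i$ direction. Every label not involving $x_i$ is unchanged. Since $f_{i+1}=1$ already, it is unchanged as well. The label $f_i=x_i\vee\neg x_{i-1}$ becomes $1$. Hence $D_{J\cup\{i\}}=D_JD_J$. This also works for $i=0$, using the convention $x_{-1}=1$.

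\textbf{Step 2 (inserting $i$ when $i+1\notin J$, $i<n$).} Now glue $A=D_J$ and $B=D_{J\cup\{i+1\}}$ along $[x_i=0]$.
\begin{itemize}
\item On this face, both cubes have $f_i=\neg x_{i-1}$ and $f_{i+1}=1$, and all other labels coincide, so the gluing is defined.
\item On the half $x_i=1$ of the glued cube we see $A|_{x_i=1}$, where $f_i=1$ and $f_{i+1}=x_{i+1}$.
\item On the half $x_i=0$ we see $B|_{x_i=1}$, where $f_i=1$ and $f_{i+1}=1$.
\end{itemize}
So the glued cube has $f_i=1$ and $f_{i+1}=(x_i\wedge x_{i+1})\vee\neg x_i=x_{i+1}\vee\neg x_i$, with all other labels those of $D_J$. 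That is, $D_JD_{J\cup\{i+1\}}=D_{J\cup\{i\}}$. The case $i+1=n$ needs a separate check against the convention $x_n=0$, where $f_n=\neg x_{n-1}$; it goes through the same way.

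\textbf{Step 3 (induction).} I would run a double induction: first on $|I|$, and for fixed $|I|$ downward on $\min I$. The base cases are $I=\emptyset$, which gives $C_n$, and $I=\{n\}$, which gives $D_n$. For general nonempty $I\neq\{n\}$, let $i=\min I$; then $i<n$, since otherwise $I=\{n\}$. Put $J=I\setminus\{i\}$.
\begin{itemize}
\item If $i+1\in I$, Step 1 produces $D_I$ from $D_J$, and $|J|<|I|$.
\item If $i+1\notin I$, Step 2 needs $D_J$ and $D_{J\cup\{i+1\}}$. The first has smaller cardinality. The second has the same cardinality as $I$ but minimum at least $i+1$, so the inner induction applies.
\end{itemize}
Since $\min I$ is bounded by $n$, the induction terminates.

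The only real obstacle is getting the conventions at the boundary labels right: $i=0$ with $x_{-1}=1$, and $i+1=n$ with $f_n$ depending only on $x_{n-1}$. This is also where the generator $D_n$ becomes unavoidable. One cannot insert $n$ by gluing along $x_n$, since that coordinate does not exist, and gluing along $x_{n-1}$ would also disturb $f_{n-1}$. I would verify these edge cases explicitly by plugging the conventions into the computations of Steps 1 and 2.
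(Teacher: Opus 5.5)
Your proof is correct and rests on the same one-step identity as the paper's proof. Gluing $D_J$ along $[x_i=0]$ with $D_{J\cup\{i+1\}}$, or with itself when $i+1\in J$, yields $D_{J\cup\{i\}}$; the paper simply unrolls this into an explicit chain of gluings along $x_{n-1},\dots,x_k$ for $k=\max(I\setminus\{n\})$, inducting on $\lvert I\setminus\{n\}\rvert$, whereas you organize the same steps as a double induction on $\lvert I\rvert$ and $\min I$.
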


\begin{proof}
	We prove this by induction on the size of $I\setminus \{n\}$.
	If $I\setminus \{n\}$ is empty, nothing is to show, since $I=\emptyset$ corresponds to $C_n$ and $I=\{n\}$ corresponds to $D_n$.
	For the induction step, let $k$ be the largest element of $I\setminus \{n\}$ and set $J=I\setminus \{k\}$.
	By induction hypothesis, both $D_J$ and $D_{J\cup\{n\}}$ are generated by $C_n$ and $D_n$ under gluing.
	Now start with $D_{J\cup \{n\}}$ and glue to it $D_J$ ($n-k$)-times, starting with $x_{n-1}$.
	In the first step, let $W^0=D_{J\cup \{n\}}$.
	If $n\in I$, then $n\in J$ and the last two label functions look like
	\begin{align*}
		W^0=(&x_{n-1}\vee \neg x_{n-2}, &1) \\
		D_J=(&x_{n-1}\vee \neg x_{n-2}, &1)
	\end{align*}
	(since $n-1\notin J$) and if $n\notin I$, then
	\begin{align*}
	D_J=(&x_{n-1}\vee \neg x_{n-2}, & \neg x_{n-1}).
	\end{align*}
	Moreover, the label functions agree for $i\leq n-2$.
    In both cases, $W^0$ and $D_J$ agree on the face $[x_{n-1}=0]$.
    So we can glue them along $x_{n-1}$, resulting in $W^1$ with label functions
	\[f_{n-1}=(x_{n-1}\wedge 1) \vee (\neg x_{n-1}\wedge 1)=1 \]
	and, if $n\in I$, then $f_n=1$, and otherwise, for $n\notin I$, we have that
	\[f_n=(x_{n-1}\wedge \neg x_{n-1})\vee (\neg x_{n-1}\wedge 1)=\neg x_{n-1}.\]
	So in both cases the formula for $D_J$ is obtained in the $n$-th label.
	Now we can iterate, and in each step $i\le n-k$ define $W^{i}$ as the gluing of $D_J$ and $W^{i-1}$ in direction $x_{n-i}$,
	where the ($n-i$)-th and ($n-i+1$)-th label functions are given by
	\begin{align*}
		W^{i-1}=(&x_{n-i}\vee \neg x_{n-i-1}, & 1), \\
		D_J=(&x_{n-i}\vee \neg x_{n-i-1}, & x_{n-i+1}\vee \neg x_{n-i})
	\end{align*}
	since $n-i\geq k$ and hence $n-i\notin J$,
	and are equal otherwise.
	Gluing these along $[x_{n-i}=0]$ yields label functions
	\[f_{n-i+1}=(x_{n-i}\wedge x_{n-i+1})\vee \neg x_{n-i}= x_{n-i+1}\vee \neg x_{n-i}\]
	and
	\[f_{n-i}=(x_{n-i}\wedge 1) \vee (\neg x_{n-i}\wedge 1)=1\]
	as formulas for $W^i$, and thus $W^{n-k}=D_I$ is constructed.
\end{proof}

\begin{remark}\label{rem:insert_1s}
Note that if $I = \{k\}$ is a singleton then the gluing procedure to obtain $D_{\{k\}}$ takes a particularly easy form.
In this case, $J = \emptyset$ and therefore, $D_J = C_n$ and $D_{J\cup\{n\}} = D_n$.
Thus, $D_{\{k\}}$ arises as the ($n-k$)-fold gluing of $C_n$ to $D_n$ along the axis $x_{n-i}$ for $1\leq i\leq n-k$,
i.e., $D_n$ appears once and $C_n$ appears ($n-k$)-times.
\end{remark}

\begin{remark}\label{rem:insert-1s-pushforward}
Pushforwarding $C_n$ along a surjection $\phi\colon[n]\twoheadrightarrow [k]$ yields the
cube $\phi_\ast C_n$ with label functions
\[
    f_i=\sup_{j\in \phi^{-1}(i)}x_j\vee \neg x_{j-1}=
    \begin{cases}
        x_{\phi^{-1}(i)}\vee \neg x_{\phi^{-1}(i)-1},\quad & \text{if}\;\lvert\phi^{-1}(i)\rvert =1, \\
        1,\quad&\text{else}.
    \end{cases}
\]
Let $I = \{i\in[k] : \lvert\phi^{-1}(i)\rvert>1\}\subset\{0,\ldots,k\}$
and consider the map of cubes $\psi\colon\{0,1\}^n\to\{0,1\}^k$ given by
\[
    \psi_i(x) = x_{\max\phi^{-1}(i)}.
\]
Then we have that $\phi_\ast C_n = D_I\circ\psi$.
Therefore, $\phi_\ast C_n$ is the $k$-cube $D_I$ blown up in directions of multiple fibers
to a degenerate $n$-cube.

On the other hand, every subset $I\subset[k]$ can be realized as $I = \{i\in [k]:\, |\phi^{-1}(i)|>1\}$
for some surjection $\phi\colon[n]\twoheadrightarrow[k]$ and so $D_I$ is $\phi_\ast C_n$ after potentially enlarging the cube size:
take a right inverse $\eta$ of the surjection $\psi$ and obtain $\phi_\ast C_n\eta = D_I$.
\end{remark}

Next we show that an interval of the form $(1,x_k\wedge \neg x_{k-1},1)$ and of the form $(1,x_k\iff x_{k-1},1)$ may be obtained from
an interval of the form $(1, x_k \vee \neg x_{k-1},1)$.

\begin{lemma}\label{lem:insert-ands-iffs}
	Let $C = (f_i)$ be a formal $n$-cube and assume that there is some $0\leq k< n$ such that
	\[
    	(f_{k-1}, f_k, f_{k+1}) = (1, x_k\vee\neg x_{k-1}, 1)
	\]
	and $f_i$ is independent of $x_k$ and $x_{k-1}$ for all $i\neq k$.
	Then one can also construct, by gluings and flips, out of $C$
	the cubes $D$ and $E$ with the same label functions as $C$ everywhere except for $i=k$,
	where they are
	\[
	    f_i^D=(x_i\wedge \neg x_{i-1})
    	\quad\text{and}\quad
	    f_i^E=(x_i\iff x_{i-1}),
	\]
    respectively.
\end{lemma}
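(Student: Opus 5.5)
The plan is to reduce everything to a two-variable computation in the variables $a=x_{k-1}$ and $b=x_k$. By hypothesis every label function $f_i$ with $i\neq k$ is independent of $a$ and $b$. So a flip in $x_{k-1}$ or $x_k$ changes only $f_k$. Moreover, by Remark~\ref{rem:formal-gluing-label}, when we glue two such cubes along $x_{k-1}$ or $x_k$, the labels $i\neq k$ agree on every face and are reproduced unchanged in the glued cube. Hence I only have to track $f_k(a,b)$. Starting from $f_k=b\vee\neg a$, I produce $b\Leftrightarrow a$ and $b\wedge\neg a$ using the rule that gluing along $x_j$ yields the function equal to $A|_{x_j=\neg\eps}$ for $x_j=1$ and to $B|_{x_j=\neg\eps}$ for $x_j=0$.

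\emph{Step 1 (the cube $E$).} Let $C'$ be the flip of $C$ in $x_k$, so $f_k^{C'}=\neg b\vee\neg a$. The faces $[x_{k-1}=0]$ of $C$ and $C'$ both have $f_k\equiv 1$, so $C$ and $C'$ can be glued along this face, the other labels agreeing as noted above. The glued cube takes the $[x_{k-1}=1]$-face of $C$, namely $b$, where $a=1$, and the $[x_{k-1}=1]$-face of $C'$, namely $\neg b$, where $a=0$. Thus $f_k=(a\wedge b)\vee(\neg a\wedge\neg b)=(x_k\Leftrightarrow x_{k-1})$, which is $E$. Flipping $E$ in $x_k$ gives the cube $E'$ with $f_k=\neg(x_k\Leftrightarrow x_{k-1})$.

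\emph{Step 2 (an auxiliary cube with $f_k=\neg a$).} The face $[x_k=1]$ of $E$ has $f_k=a$, and the face $[x_k=0]$ of $E'$ has $f_k=a$ as well. Gluing along this common face gives a cube whose $f_k$ equals the $[x_k=0]$-face of $E$ (namely $\neg a$) where $b=1$, and the $[x_k=1]$-face of $E'$ (again $\neg a$) where $b=0$. So the auxiliary cube $F$ has $f_k=\neg x_{k-1}$ and all other labels unchanged.

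\emph{Step 3 (the cube $D$).} The point of $F$ is that its face $[x_{k-1}=1]$ has $f_k\equiv 0$; this is exactly the value needed for $D$. The face $[x_{k-1}=0]$ of $F$ has $f_k\equiv 1$, which agrees with the face $[x_{k-1}=0]$ of $C$. Gluing $F$ and $C$ along this face, in the order that puts $F$'s opposite face at $x_{k-1}=1$ and $C$'s opposite face at $x_{k-1}=0$, gives $f_k=(a\wedge 0)\vee(\neg a\wedge b)=x_k\wedge\neg x_{k-1}$. This is $D$.

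The only real obstacle I expect is Step 3. Neither $C$ nor any of its flips has a face on which $f_k$ vanishes, so one must first manufacture such a face; the detour through $E$ and the degenerate-in-$x_k$ cube $F$ does exactly this. The remaining work is bookkeeping: checking at each gluing that the untouched labels really coincide on the chosen face (immediate from their independence of $x_{k-1},x_k$) and that the hypothesis $f_{k\pm1}=1$ is compatible. The latter is automatic, since these labels are constant and play no role in the computation.
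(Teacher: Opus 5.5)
Your argument is correct, but it runs in the opposite order from the paper's.

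The paper builds $D$ first. It glues $C$ to itself along $[x_{k-1}=0]$ to get an auxiliary cube $W$ with $f_k=x_k$. It then glues $C$ and $W$ along $[x_k=1]$ to get $f_k=x_k\wedge\neg x_{k-1}$. Only then does it obtain $E$, by gluing $D$ with its flip in $x_{k-1}$ along $[x_k=0]$. You instead obtain $E$ from a single gluing of $C$ with its flip in $x_k$, and derive $D$ afterwards.

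Two small remarks on your Steps~2--3. First, Step~2 glues the $[x_k=1]$-face of $E$ to the $[x_k=0]$-face of $E'$. That is not literally a gluing in the sense of Definition~\ref{def:formal-gluing}, which requires the same face on both cubes. However, after flipping $E'$ back it is just $E$ glued to itself along $[x_k=1]$, with the same result. Second, and more to the point, the detour through $E$ is unnecessary. The face $[x_k=0]$ of $C$ already has $f_k=\neg x_{k-1}$, so your cube $F$ is simply $C$ glued to itself along $[x_k=1]$.

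Seen this way, $F$ is the mirror image of the paper's $W$. It is a self-gluing of $C$ in the other coordinate direction, and it is its $[x_{k-1}=1]$-face, rather than its $[x_k=0]$-face, that carries $f_k\equiv 0$. Then $D=FC$ comes from $C$ by gluings alone, exactly as the paper's $D=CW$ does.

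With this shortcut both routes cost three gluings and one flip. Yours has the advantage that $E$ needs only one gluing and that $D$ and $E$ are constructed independently of each other. In the paper, $E$ is built on top of $D$, and the flip enters only in that last step.
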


\begin{proof}
	The occurrence of 1's in the ($k-1$)-th label and in the ($k+1$)-th label means that the only label we have to care about whenever gluing along $x_k$ or $x_{k-1}$ is the label $f_k$.
Now we can glue $C$ to itself along $[x_{k-1}=0]$ to obtain a cube $W$ with label function
	\[f_k^W=x_k.\]
	Gluing $C$ and $W$ along $[x_k=1]$ yields the cube $D = CW$ with label function
	\[f_k^D=(x_k\wedge \neg x_{k-1})\vee (\neg x_k\wedge 0)=x_k\wedge \neg x_{k-1}.\]
    Flipping%
	\footnote{Note that this is one of the rare instances where we need to apply some automorphism.}
    $D$ along $x_{k-1}$ now yields $D'$ with
	\[f_k^{D'}=x_k\wedge x_{k-1},\]
    and gluing $D$ and $D'$ along $[x_k=0]$ yields the cube $E = D'D$ with
	\[f_k^E=(x_k \wedge x_{k-1})\vee (\neg x_k \wedge \neg x_{k-1}).\qedhere\]
\end{proof}

\begin{proof}[Proof of Theorem~\ref{prop:C-D-glue-all}]
Take some non-degenerate degenerable formal $n$-cube $X$ and assume it has the normal form of Corollary~\ref{cor:class-formal-degenerable-non-degenerate-interval}.
Take $I$ to be the set of labels which have label function constant equal to one.
Starting from $C_n$, insert 1's at every place $i\in I$, obtaining $D_I$ from Lemma~\ref{lem:insert_1s}.
In $X$, an expression like $x_i\wedge\neg x_{i-1}$ or $x_i\iff x_{i-1}$ can only appear between 1's,
and so we can apply Lemma~\ref{lem:insert-ands-iffs} to insert those from $D_I$.
\end{proof}

In the rest of this subsection we will prove the following theorem
which is a stronger version of Theorem~\ref{prop:C-D-glue-all}
that also allows for label pushforwards.

\begin{theorem}\label{thm:C-D-glue-all-really}
Every degenerable $n$-cube is generated, under gluing and cube morphisms,
from the cubes $i_\ast C_k$ and $i_\ast D_k$ where $i\colon[k]\to[n]$ runs over all injections.
\end{theorem}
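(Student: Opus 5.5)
The plan is an induction on $n$ that works case by case through the normal form of Proposition~\ref{prop:class-formal-degenerable}. Two moves come for free. First, flips are cube morphisms. Second, by Remark~\ref{rem:formal-gluing-push}, gluing commutes with label pushforward. So it is enough to write an arbitrary degenerable $n$-cube $X=(f_i)_{i=0}^n$ as a cube morphism applied to a gluing of pushforwards of cubes that are already known to be generated. The base of everything is Theorem~\ref{prop:C-D-glue-all}: every non-degenerate degenerable $k$-cube is generated by $C_k$ and $D_k$ under flips and gluings. Pushing such a cube forward along an injection $i\colon[k]\to[n]$ therefore expresses it through the generators $i_\ast C_k$ and $i_\ast D_k$.

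\textbf{Step 1: remove zero labels.} Suppose some $f_i=0$. The recursion forces $f_{i-1}\in\{0,1,\neg x_{i-2}\}$ and $f_{i+1}\in\{0,1,x_{i+1}\}$. Together with the fact that $f_i$ only involves $x_{i-1},x_i$, this shows that no label function depends on $x_i$. Hence $X=\delta_{i,\ast}Y\circ p_i$, where $Y$ is the $(n-1)$-cube on $[n-1]$ obtained by deleting the label $i$ and the coordinate $x_i$. I will check that the indices shift consistently: the old label $i+1$ only uses $x_{i+1}$, which becomes the new coordinate $y_i$. With this, $Y$ is again degenerable. Since $p_i$ is a cube morphism and $\delta_i$ is an injection, induction on $n$ handles this case. (Composites of injections are injections, so the generating set is preserved.)

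\textbf{Step 2: no zero labels.} If $X$ is non-degenerate we are done by Theorem~\ref{prop:C-D-glue-all}. Otherwise $X$ is degenerate along some $x_j$, which forces $f_j\in\{1,\neg x_{j-1}\}$ and $f_{j+1}\in\{1,x_{j+1}\}$. I would treat the four local patterns separately, each time exhibiting $X$ as a pullback along a cube morphism of a degenerable $n$-cube with strictly fewer degenerate directions, and then inducting on that number.

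The pattern $(1,1)$ arises from the corresponding $D_I$-type cube by substituting $x_j\mapsto 1$. The patterns $(\neg x_{j-1},1)$ and $(1,x_{j+1})$ arise from cubes containing $x_j\vee\neg x_{j-1}$ resp.\ $x_{j+1}\vee\neg x_j$ by substituting $x_j\mapsto 0$ resp.\ $x_j\mapsto 1$. In each case the substitution is the face inclusion composed with the projection $p_j$, hence a cube morphism. Equivalently, it is the gluing of a cube with itself along the opposite face. One can alternatively use Remark~\ref{rem:insert-1s-pushforward}: there $\phi_\ast C_n=D_I\circ\psi$, and $D_I$ is generated by Lemma~\ref{lem:insert_1s}.

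\textbf{Main obstacle.} I expect the hard case to be the mixed pattern $(f_j,f_{j+1})=(\neg x_{j-1},x_{j+1})$. Here no single substitution of $x_j$ in a non-degenerate cube works: it would require $x_j\le\neg x_{j-1}$ and $x_j\ge\neg x_{j+1}$ simultaneously. My first attempt is to produce it by gluing instead of substitution. Take two cubes that agree away from labels $j,j+1$: one with $(\neg x_{j-1},1)$ and one with $(1,x_{j+1})$. Alternatively, take two substitution instances of a cube carrying $x_j\vee\neg x_{j-1}$, $x_{j+1}\vee\neg x_j$. Glue them along $[x_j=\epsilon]$, exploiting that gluing only sees the labels that depend on $x_j$ (Remark~\ref{rem:formal-gluing-label}). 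If that fails, the fallback is the route of Step 1: realize $x_j$ as a degenerate direction and write $X$ as a pullback along $p_j$ of an $(n-1)$-cube whose labels $j,j+1$ are re-indexed. The re-indexed cube should be a gluing of pushforwards along injections skipping $j$ or $j+1$, with induction on $n$ applied to it. The bookkeeping of which injection and which face to glue along is the part I expect to require genuine care.
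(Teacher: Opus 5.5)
Your Step~1 is correct: it is the paper's step~(0), done one zero at a time. Your substitutions for $(\neg x_{j-1},1)$ and $(1,x_{j+1})$ are exactly the paper's steps~(2) and~(3). The genuine gap is the mixed pattern $(f_j,f_{j+1})=(\neg x_{j-1},x_{j+1})$, which you leave open.

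Your first attempts cannot close it. Both candidate cubes are degenerate in $x_j$, and such a cube is determined by its face $[x_j=\epsilon]$. So two of them can be glued along $x_j$ only if they are equal, and then the gluing returns the same cube. Your two candidates, which are also the two substitution instances you mention, do not even agree on such a face.

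The problem is not only technical. For $n=3$ the degenerable cube $(x_0,\neg x_0,x_2,\neg x_2)$ takes the value $\{0,3\}$, which is a value of neither $C_3$ nor $D_3$. Gluings and cube morphisms never create new values, so this cube is not generated from $C_3$ and $D_3$. Hence your Step~2, which only pulls back cubes generated from $C_n$ and $D_n$, can never reach it. Pushforwards along proper injections are unavoidable here, not a fallback.

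The missing idea is Lemma~\ref{lem:glue_C}. Gluing $C_l^0,C_l^1,\dots,C_l^k$ successively along the faces $[x_{i-1}=0]$ yields $V_l^k$. This cube coincides with $C_l$ except that labels $k,k+1$ are $(\neg x_{k-1},x_k)$, and pulling back along a coordinate shift makes it a degenerable cube carrying the mixed pair. The pair is seeded at the end of the label range ($C_l^0$ begins with $(0,x_0)=(\neg x_{-1},x_0)$) and moved one step by each gluing. So the whole chain of face maps $\delta_0,\dots,\delta_k$ is used (or $\delta_{k+1},\dots,\delta_{l+1}$ via Remark~\ref{rem:glue-C-inverse}), not just injections skipping $j$ or $j+1$.

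To place such a pair inside an arbitrary cube, the paper builds $X$ from left to right in its steps~(4)--(5). Suppose the already constructed $Y$ agrees with $X$ below $i_0$ and with $C_l$ from $i_0$ on. Then $\delta_{i,\ast}Y$ for $i\ge i_0$ can replace $C_l^i$ in that gluing, since the gluing only involves faces $[x_i=0]$ and labels $\ge i_0$. Lemma~\ref{lem:glue_D} supplies the companion cube with last label $1$, which is needed to insert further $1$'s via Lemma~\ref{lem:insert_1s}.

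There is also a smaller gap in your $(1,1)$ case: the substitution fails to reduce the number of degenerate directions when both neighbouring labels are ``and''/``iff'' labels. Take $(1,x_1\wedge\neg x_0,1,1,x_4\wedge\neg x_3,1)$ for $n=5$. Any degenerable $X'$ pulling back to it has and-type labels at $1$ and $4$, hence constant $1$ at $0,2,3,5$, and is again degenerate along $x_2$. The paper avoids this by first removing ands and iffs with Lemma~\ref{lem:insert-ands-iffs}, which also applies to degenerate cubes.
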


Note that in the theorem, one can also substitute the condition on the simplex maps with the condition on all maps,
but relaxing the generation to only $C_n$.

\begin{proposition}\label{cor:pushforward-generates}
Let $n\in\N$. Then the cubes $\phi_*C_k$ for all $\phi\colon[k]\to[n]$ generate,
under gluing and cube morphisms,
exactly the same cubes as the cubes $i_*C_k$ and $i_*D_k$ for all injections $i\colon[k]\to[n]$.
\end{proposition}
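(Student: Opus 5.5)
We show two inclusions of generated classes. Write $\mathcal G_1$ for the closure, under gluing and cube morphisms, of all $\phi_\ast C_k$ with $\phi\colon[k]\to[n]$ arbitrary. Write $\mathcal G_2$ for the closure of all $i_\ast C_k$ and $i_\ast D_k$ with $i$ injective. Pushforward along a fixed map commutes with gluing (Remark~\ref{rem:formal-gluing-push}). It also commutes with cube morphisms, since these act on the domain side while pushforward acts on labels. Hence if some cube $Y$ with labels in $[k]$ lies in the closure of a family, then $\psi_\ast Y$ lies in the closure of the pushed-forward family.

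\textbf{$\mathcal G_1\subseteq\mathcal G_2$.} Factor $\phi=i\circ s$ with $s\colon[k]\twoheadrightarrow[l]$ surjective and $i\colon[l]\to[n]$ injective. By Remark~\ref{rem:insert-1s-pushforward}, $s_\ast C_k=D_I\circ\psi$, where $I\subseteq\{0,\dots,l\}$ is the set of labels with fibre of size $>1$ and $\psi$ is a cube morphism. By Lemma~\ref{lem:insert_1s}, $D_I$ is generated by $C_l$ and $D_l$ under gluing. Therefore $s_\ast C_k$ lies in the closure of $\{C_l,D_l\}$, and pushing forward along $i$ gives $\phi_\ast C_k=i_\ast s_\ast C_k\in\mathcal G_2$.

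\textbf{$\mathcal G_2\subseteq\mathcal G_1$.} The cubes $i_\ast C_k$ are generators of $\mathcal G_1$ already, so it suffices to produce $i_\ast D_k$. By the pushforward-compatibility above it is enough to get $D_k$ from the cubes $\phi_\ast C_m$ with $\phi$ landing in $[k]$. Take the surjection $s\colon[k+1]\twoheadrightarrow[k]$ identifying $k$ and $k+1$. Its only fibre of size $>1$ is over $k$, so by Remark~\ref{rem:insert-1s-pushforward} $s_\ast C_{k+1}=D_{\{k\}}\circ\psi$. Precomposing with a right inverse $\eta$ of $\psi$, which is a cube morphism, gives $D_{\{k\}}=D_k$. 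Then $D_k=(s_\ast C_{k+1})\circ\eta$, and so $i_\ast D_k=(i\circ s)_\ast C_{k+1}\circ\eta\in\mathcal G_1$.

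For $D_0$, take $s\colon[1]\to[0]$. Then $s_\ast C_1$ has label function $x_0\vee\neg x_0=1$, which is the constant cube with value $\{0\}$. Restricting it to a point gives $D_0$.

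\textbf{Main obstacle.} The real content is the commutation of pushforward with the generation process, in particular with cube morphisms. This holds because pullback along cube maps and pushforward of labels act on different sides of $X\colon\{0,1\}^n\to\su[m]$. We also need to check that the right inverse $\eta$ of $\psi$ in Remark~\ref{rem:insert-1s-pushforward} really is a morphism in $\bbox$. This holds because $\psi$ is a coordinate projection, so $\eta$ can be taken to be a coordinate embedding, for instance duplicating coordinates.
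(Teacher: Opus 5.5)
Your proposal is correct and follows essentially the same route as the paper. The inclusion $\mathcal G_1\subseteq\mathcal G_2$ goes via the mono--epi factorization, Remark~\ref{rem:insert-1s-pushforward} and Lemma~\ref{lem:insert_1s}, and the reverse inclusion realizes $D_k$ inside $\sigma_{k,\ast}C_{k+1}$. Your right inverse $\eta$ of $\psi$ is just another description of the face inclusion $[x_k=1]$ of $\sigma_{k,\ast}C_{k+1}$ that the paper uses.
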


\begin{proof}
For the forward direction note that the face $[x_k=1]$ of $\sigma_{k,\ast}C_{k+1}$ is equal to $D_k$,
where $\sigma_k\colon[k+1]\to[k]$ is the $k$-th degeneracy.
Thus, if $i\colon[k]\to[n]$ is monic, $i_*D_k$ is a face of $i_\ast\sigma_{k,\ast}C_{k+1}$,
where $i\circ\sigma_k\colon[k+1]\to[n]$.
So $i_\ast C_k$ and $i_\ast D_k$ for $i\colon[k]\to[n]$ injective can be generated by $\phi_\ast C_k$ for all $\phi$.

For the backwards direction, let $e\colon[k]\to[n]$ be surjective.
Then $e_\ast C_k$ is equal to a blown up $D_I$ for some $I\subset[n]$ by Remark~\ref{rem:insert-1s-pushforward}.
By Lemma~\ref{lem:insert_1s}, $D_I$ is generated by $C_n$ and $D_n$ under gluing and flips,
and therefore, so is $e_\ast C_k$.
For general $\phi\colon[k]\to[n]$ take its mono-epi factorization $\phi=i\circ e$
and then use that we can glue as before the cubes $i_\ast C_m$ and $i_\ast D_m$ to obtain $i_\ast e_\ast C_k = \phi_\ast C_k$,
see Remark~\ref{rem:formal-gluing-push}.
\end{proof}

In order to prove Theorem~\ref{thm:C-D-glue-all-really} we introduce another kind of cube
which will be also of great importance later on.

\begin{definition}\label{def:C-D-push}
Define $C_n^i$ and $D_n^i$ for $0\le i\le n+1$ by pushforward of $C_n$, resp. $D_n$, with the unique injective map $\delta_i\colon[n]\to[n+1]$ that misses $i$.
\end{definition}

\begin{remark}\label{rem:C-D-push}
In formulas, $C_n^i$ is the tuple $C_n$ with a 0 inserted as $i$-th coordinate, and similarly for $D_n^i$:
\[
    f_j^{C_n^i} = \begin{cases}
        x_j\vee\neg x_{j-1},\quad & 0\leq j\leq i-1, \\
        0,\quad & j = i, \\
        x_{j-1}\vee\neg x_{j-2},\quad & i+1\leq j\leq n+1.
    \end{cases}
\quad\quad
    f_j^{D_n^i} = \begin{cases}
        f_j^{D_n},\quad & 0\leq j\leq i-1, \\
        0,\quad & j = i, \\
        f_{j-1}^{D_n},\quad & i+1\leq j\leq n+1.
    \end{cases}
\]
\end{remark}

\begin{example}[$n=3$]
In pictures:
\begin{center}
\begin{tikzpicture}

\begin{scope}[shift={(0,3.05)}]
  \smallcube{C_3^4}{12}{123}{12}{13}{02}{023}{012}{013}
\end{scope}

\begin{scope}[shift={(2.45,3.05)}]
  \smallcube{C_3^3}{12}{124}{12}{14}{02}{024}{012}{014}
\end{scope}

\begin{scope}[shift={(4.90,3.05)}]
  \smallcube{C_3^2}{13}{134}{13}{14}{03}{034}{013}{014}
\end{scope}

\begin{scope}[shift={(7.35,3.05)}]
  \smallcube{C_3^1}{23}{234}{23}{24}{03}{034}{023}{024}
\end{scope}

\begin{scope}[shift={(9.80,3.05)}]
  \smallcube{C_3^0}{23}{234}{23}{24}{13}{134}{123}{124}
\end{scope}

\begin{scope}[shift={(0,0)}]
  \smallcube{D_3^0}{234}{234}{234}{24}{134}{134}{1234}{124}
\end{scope}

\begin{scope}[shift={(2.45,0)}]
  \smallcube{D_3^1}{234}{234}{234}{24}{034}{034}{0234}{024}
\end{scope}

\begin{scope}[shift={(4.90,0)}]
  \smallcube{D_3^2}{134}{134}{134}{14}{034}{034}{0134}{014}
\end{scope}

\begin{scope}[shift={(7.35,0)}]
  \smallcube{D_3^3}{124}{124}{124}{14}{024}{024}{0124}{014}
\end{scope}

\begin{scope}[shift={(9.80,0)}]
  \smallcube{D_3^4}{123}{123}{123}{13}{023}{023}{0123}{013}
\end{scope}
\end{tikzpicture}
\end{center}
\end{example}

The special property of the $C_n^i$ is that one can iteratively glue them together,
obtaining the gluings $V_n^k$.

\begin{lemma}\label{lem:glue_C}
Let $V_n^0=C_n^0$ and then inductively define $V_n^k$ for $k\leq n$ as the gluing of
$C_n^{k}$ and $V_n^{k-1}$ along the face $[x_{k-1}=0]$.
It fulfills the formula
\[
    f_j^{V_n^k}=
    \begin{cases}
        x_j\vee\neg x_{j-1},\quad & 0\le j\le k-1, \\
        \neg x_{k-1} &j=k, \\
        x_k& j=k+1, \\
        x_{j-1}\vee\neg x_{j-2}&k+2\le j\le n+1.
    \end{cases}
\]
\end{lemma}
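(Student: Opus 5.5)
This is an induction on $k$, with the formula itself as the inductive invariant. At each step we check that the two cubes $C_n^k$ and $V_n^{k-1}$ agree on the face $[x_{k-1}=0]$, and then compute the glued label functions with Remark~\ref{rem:formal-gluing-label}.

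\emph{Base case $k=0$.} By Remark~\ref{rem:C-D-push}, $C_n^0$ has $f_0=0$, $f_1=x_0\vee\neg x_{-1}=x_0$ (using $x_{-1}=1$), and $f_j=x_{j-1}\vee\neg x_{j-2}$ for $j\ge 2$. The claimed formula at $k=0$ gives $f_0=\neg x_{-1}=0$, $f_1=x_0$, and the same tail. So $V_n^0=C_n^0$ matches.

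\emph{Inductive step.} Assume the formula for $V_n^{k-1}$ and compare it with $C_n^k$ label by label.
\begin{itemize}
\item For $j\le k-2$, both have $f_j=x_j\vee\neg x_{j-1}$.
\item For $j\ge k+2$, both have $f_j=x_{j-1}\vee\neg x_{j-2}$.
\end{itemize}
These agree and are independent of $x_{k-1}$, except for $j=k-2$ and $j=k+1$, which we treat below. The interesting labels are $j=k-1,k,k+1$:
\[
V_n^{k-1}\colon(\neg x_{k-2},\ x_{k-1},\ x_k\vee\neg x_{k-1}),\qquad C_n^k\colon(x_{k-1}\vee\neg x_{k-2},\ 0,\ x_k\vee\neg x_{k-1}).
\]
Restricting to $x_{k-1}=0$ gives $(\neg x_{k-2},0,1)$ on both sides, so the cubes can be glued along $[x_{k-1}=0]$. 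The label $j=k-2$ (when $k\ge 2$) depends on $x_{k-1}$ but is the same function in both cubes, so it also agrees on that face.

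\emph{Computing the gluing.} The gluing $C_n^kV_n^{k-1}$ takes $C_n^k$ evaluated at $x_{k-1}=1$ on $[x_{k-1}=1]$, and $V_n^{k-1}$ evaluated at $x_{k-1}=1$ on $[x_{k-1}=0]$. We work through the labels.
\begin{itemize}
\item \emph{Label $k-1$:} $(x_{k-1}\wedge 1)\vee(\neg x_{k-1}\wedge\neg x_{k-2})=x_{k-1}\vee\neg x_{k-2}$.
\item \emph{Label $k$:} $(x_{k-1}\wedge 0)\vee(\neg x_{k-1}\wedge 1)=\neg x_{k-1}$.
\item \emph{Label $k+1$:} both restrictions equal $x_k$, so the result is $x_k$.
\item \emph{Label $k-2$:} the common label $x_{k-2}\vee\neg x_{k-3}$ does not depend on $x_{k-1}$, so it survives unchanged.
\end{itemize}

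\emph{Main obstacle.} The only real subtlety is a careful reading of the gluing convention in Definition~\ref{def:formal-gluing}: the glued cube takes the values of the \emph{opposite} faces $[x_{k-1}=\neg\eps]$, and $C_n^k$ is the piece placed at $x_{k-1}=1$. Once this is fixed, each label computation is a one-line Boolean identity.
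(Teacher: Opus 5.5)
Your proof is correct and follows the paper's argument exactly. Like the paper, you induct on $k$, check that $C_n^k$ and $V_n^{k-1}$ agree on $[x_{k-1}=0]$ in the three relevant labels, and compute the glued labels via Remark~\ref{rem:formal-gluing-label}; the only difference is that the paper writes the step as $k\to k+1$.

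One small slip: you say label $k-2$ depends on $x_{k-1}$, but $x_{k-2}\vee\neg x_{k-3}$ does not involve $x_{k-1}$, as you note yourself later. The exception is therefore unnecessary, though harmless, since that label coincides in both cubes.
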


\begin{proof}
	Induct on $k$. For $k=0$ the formula is precisely the one of $C_n^0$.
	Recall the formula for $C_n^{k+1}$ from Remark~\ref{rem:C-D-push}.
	Outside of the labels $j\in\{k,k+1,k+2\}$ this agrees with $V_n^k$,
	and is independent of $x_k$ there.
	So when gluing along $[x_k=0]$ we can focus on these three labels.
	There, we have that
	\begin{align*}
	(f_k^{V_n^k},f_{k+1}^{V_n^k},f_{k+2}^{V_n^k})=(&\neg x_{k-1},&&x_k,&&&  x_{k+1}\vee\neg x_{k})\\
	(f_k^{C_n^{k+1}}, f_{k+1}^{C_n^{k+1}},f_{k+2}^{C_n^{k+1}})=(&x_k\vee\neg x_{k-1},&&0,&&& x_{k+1}\vee\neg x_k)
	\end{align*}
	and hence evaluating at $x_k=0$ both yield the triple $(\neg x_{k-1},0,1)$.
	Thus we may glue them along $[x_k=0]$ and obtain
    \[
	V_n^{k+1} = C_n^{k+1}V_n^k = (x_k\wedge C_n^{k+1}\rvert_{x_k=1})\vee (\neg x_k\wedge V_n^k\rvert_{x_k=1}).
    \]
	In the relevant labels this computes to
	\begin{align*}
	&(x_k\wedge (1,0,x_{k+1}))\vee (\neg x_k \wedge (\neg x_{k-1},1,x_{k+1})) \\
    = &(x_k\vee (\neg x_k\wedge \neg x_{k-1}), \neg x_k, (x_k\wedge x_{k+1})\vee (\neg x_k\vee x_{k+1}))\\
	= &(x_k\vee \neg x_{k-1}, \neg x_k, x_{k+1})
		\end{align*}
which completes the induction and yields the desired formula.
\end{proof}

\begin{remark}\label{rem:glue-C-inverse}
Since gluing is (up to flips) self-inverse and $V_n^n = C_n^{n+1}$, the gluing $\prod_{j>k}C_n^j$, starting from $C_n^{n+1}$, yields the same result, i.e.,
\[\prod_{j=n+1}^{k+1}C_n^j=\prod_{j=0}^k C_n^j.\]
In particular, omitting flips notationally,
\[C_n^k=(\prod_{j<k}C_n^j)(\prod_{j>k} C_n^j).\]
\end{remark}
A similar gluing procedure is possible with the cubes $D_n^i$ as well.

\begin{lemma}\label{lem:glue_D}
	For $0\le k\le n$, one can iteratively glue $D_n^{k}$ along the face $[x_{k-1}=0]$
	to the gluing $\prod_{j\le k-1}D_n^j$ and obtains a cube with label functions
	\[f_j=\begin{cases}
			  x_j\vee \neg x_{j-1} &0\le j\le k-1\\
			  \neg x_{k-1} & j=k\\
			  x_k & j=k+1\\
			  x_{j-1}\vee \neg x_{j-2} & k+2\le j\le n\\
			  1&j=n+1
	\end{cases}\]
\end{lemma}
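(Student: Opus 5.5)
This goes by induction on $k$, mirroring the proof of Lemma~\ref{lem:glue_C}. The key observation is that $D_n^j$ and $C_n^j$ have the same label functions in positions $0,\ldots,n$. They differ only in the last label $n+1$: for $j\le n$ it is $1$ in $D_n^j$ and $x_n\vee\neg x_{n-1}=\neg x_{n-1}$ in $C_n^j$. (For $j\le n$ the shifted $f^{D_n}_n\equiv 1$ sits in position $n+1$.) Since the constant function $1$ does not depend on any coordinate, Remark~\ref{rem:formal-gluing-label} says that gluing along $x_{k-1}$ leaves label $n+1$ equal to $1$, as long as both pieces have $f_{n+1}=1$. So I expect the computation to reduce verbatim to the one already done for the $C_n^j$.

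\textbf{Base case.} For $k=0$, the cube is $D_n^0$. Its labels are $f_0=0$, then $f_j=f^{D_n}_{j-1}=x_{j-1}\vee\neg x_{j-2}$ for $1\le j\le n$, and $f_{n+1}=1$. With the conventions $x_{-1}=1$ and $x_{-2}$ irrelevant, we get $\neg x_{-1}=0$ for $j=k=0$ and $x_0=x_0\vee\neg x_{-1}$ for $j=1$. This matches the claimed formula.

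\textbf{Induction step.} Suppose the gluing $W^{k}$ of $D_n^0,\dots,D_n^k$ has the stated labels. Compare it with $D_n^{k+1}$. Outside the labels $\{k,k+1,k+2\}$ they agree: both are $x_j\vee\neg x_{j-1}$ below $k$, both are $x_{j-1}\vee\neg x_{j-2}$ from $k+3$ up to $n$, and both are $1$ at $n+1$. These agreeing labels are also independent of $x_k$. On the triple $(k,k+1,k+2)$ the two cubes have exactly the same entries as $V_n^k$ and $C_n^{k+1}$ in Lemma~\ref{lem:glue_C}, namely $(\neg x_{k-1},x_k,x_{k+1}\vee\neg x_k)$ and $(x_k\vee\neg x_{k-1},0,x_{k+1}\vee\neg x_k)$. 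Both restrict to $(\neg x_{k-1},0,1)$ on $[x_k=0]$, so the gluing is defined, and the same Boolean computation gives $(x_k\vee\neg x_{k-1},\neg x_k,x_{k+1})$.

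\textbf{Edge case.} The only thing needing care is the endpoint case $k+2=n+1$, i.e.\ $k=n-1$, where label $k+2$ is the constant $1$ rather than $x_{k+1}\vee\neg x_k$. There the triple is $(\neg x_{k-1},x_k,1)$ against $(x_k\vee\neg x_{k-1},0,1)$. Both again restrict to $(\neg x_{k-1},0,1)$ on $[x_k=0]$, and gluing gives $(x_k\vee\neg x_{k-1},\neg x_k,1)$, consistent with the formula having $f_{n+1}=1$. Checking these index boundaries, together with the convention $x_n=0$ inside $D_n$, is the main bookkeeping obstacle; the rest is already contained in Lemma~\ref{lem:glue_C}.
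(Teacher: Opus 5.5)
Your proof is correct and follows the paper's argument. The paper's proof just says the lemma is ``completely analogous'' to Lemma~\ref{lem:glue_C}. Your induction spells this out, including two points the paper leaves implicit: the constant label $f_{n+1}=1$ is unaffected by every gluing, and the endpoint step $k=n-1$ still goes through, which gives $f_{n+1}=1$ rather than $x_n$ in the $k=n$ formula. The paper also mentions an alternative route, deducing the lemma directly from Lemma~\ref{lem:glue_C} via the $[x_n=1]$ face of $\sigma_{n+1,\ast}C_{n+1}^k$; your direct computation does not need it.
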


\begin{proof}
Completely analogous to the proof of Lemma~\ref{lem:glue_C} where we glued the $C_n^k$.
Another possibility is to note that the face $[x_n=1]$ of $\sigma_{n,\ast}C_{n+1}$ is $D_n$
and hence the face $[x_n=1]$ of $\sigma_{n+1,\ast}C_{n+1}^k$ is $D_n^k$ for $k\le n$.
Thus we could deduce the lemma also directly from Lemma~\ref{lem:glue_C}.
Note that one cannot glue $D_n^{n+1}$ to $\prod_{j=0}^n D_n^j$.
\end{proof}

\begin{proof}[Proof of Theorem~\ref{thm:C-D-glue-all-really}]
Consider a degenerable $n$-cube $X = (f_i)_{i=0,\ldots,n}$ which we can assume to be of the normal form~\ref{prop:class-formal-degenerable}.
We show that it can be obtained by a (finite) sequence of gluing (pullbacks of) the formal cubes $i_\ast C_l$ and $i_\ast D_l$,
where $i\colon[l]\to[n]$ runs over all injections.
We do so by reducing the problem step by step,
where in each step $k$ we show that the degenerable cube $X_{k-1}$ (starting from $X = X_{-1}$)
can be constructed from $C_n$, $D_n$ and a degenerable(!) cube $X_k$ which is considerably easier than its predecessor,
reducing the problem to the trivial case for $k$ big enough.
\begin{enumerate}
    \item[(0)] In a first step, we get rid of all label functions that are constant zero.
        Denote by $\phi\colon[m]\to[n]$ the unique injective map in $\bbDelta$
        whose image consists of all $i$ such that $f_i$ is not constant $0$ (if $\phi = \id$, skip this step).
        Consider the $m$-cube $X_0$ with labels in $[m]$ which is given by $(f_{\phi(j)})_{j=0,\ldots,m}$
        but pulled back along the corresponding cube morphism $\gamma\colon\{0,1\}^m\to\{0,1\}^n$ such that
        $f_{\phi(j)}$ depends on $x_j$ and $x_{j-1}$, and not on $x_{\phi(j)}$ and $x_{\phi(j)-1}$.
        This makes $X_0$ a degenerable $m$-cube by Proposition~\ref{prop:class-formal-degenerable},
        and $X$ can be constructed from $X_0$:
        If $f_i = 0$, then both $f_{i-1}$ and $f_{i+1}$ are independent of $x_i$ and $x_{i-1}$ by Proposition~\ref{prop:class-formal-degenerable}.
        Thus, for $\phi_\ast X_0$, we can apply an appropriate cube morphism $\eta\colon\{0,1\}^n\to\{0,1\}^m$ (which is a retract to $\gamma$)
        such that $\eta^\ast(\phi_\ast X_0) = X$, since $\phi_\ast X_0$ has label functions $f_i$ if $\phi(j) = i$ and $0$ otherwise.
        Therefore, we can assume in the following that in $X$, no label functions equal to $0$ occur,
        which also applies to $f_n = x_n = 0$ by our convention.
    \item[(1)] In the second step, we get rid of all occurrences of \enquote{ands} and \enquote{iffs}.
        If $f_i = x_i\wedge\neg x_{i-1}$ or $f_i = x_i\iff x_{i-1}$,
        then by Proposition~\ref{prop:class-formal-degenerable} we know that $f_{i-1} = 1$ and $f_{i+1} = 1$.
        Thus, take the $n$-cube $X_1$ with labels in $[n]$ where all occurrences of
        \enquote{ands} and \enquote{iffs} have been replaced by the expression $x_i\vee\neg x_{i-1}$,
        i.e., $X_1$ has label functions
        \[
            g_i = \begin{cases}
                x_i\vee\neg x_{i-1},\quad & f_i\in\{x_i\wedge\neg x_{i-1},x_i\iff x_{i-1}\}, \\
                f_i,\quad & \text{else}.
            \end{cases}
        \]
        Again by Proposition~\ref{prop:class-formal-degenerable}, $X_1$ is degenerable.
        Lemma~\ref{lem:insert-ands-iffs} shows that $X$ can be constructed from $X_1$.
        Thus we can additionally assume that $X$ has no occurrence of \enquote{ands} or \enquote{iffs}.
    \item[(2)] Next, we get rid of all occurrences of label functions of the form $f_i = \neg x_{i-1}$ which are not followed by the label function $f_{i+1} = x_{i+1}$.
        In this case, $f_{i+1} = 1$ (since $0$ can't occur by assumption of step 0).
        Thus take the $n$-cube $X_2$ with labels in $[n]$ where all such $f_i$ are replaced with $x_i\vee\neg x_{i-1}$.
        This cube $X_2$ is degenerable by Proposition~\ref{prop:class-formal-degenerable}.
        Moreover, pulling $X_2$ back along the endomorphism $\{0,1\}^n\to\{0,1\}^n$ of cubes which evaluates $x_i = 0$ for each $i$ where we have replaced $f_i$ yields $X$.
    \item[(3)] The same argument applies to all occurrences of label functions of the form $f_i = x_i$ which are not preceded by $f_{i-1} = \neg x_{i-2}$.
        But beware that one has to restrict to the face $x_{i-1} = 1$.
        Together we can assume that $X$ has as label functions only $1$, $x_i\vee\neg x_{i-1}$, $x_i$ or $\neg x_{i-1}$ but with the restriction that in the latter two cases, there have to appear sequences of the form
        \[
            (f_{i-1},f_i) = (\neg x_{i-2},x_i)\quad\text{or}\quad (f_i,f_{i+1}) = (\neg x_{i-1},x_{i+1}).
        \]
    \item[(4) and (5)] In the last step we construct the degenerable cube $X$ from left to right,
        inserting ones and pairs of the form $(\neg x_{i-1},x_{i+1})$.

        We now inductively build $X = (f_i)_i$ from $i_\ast C_p$ and $i_\ast D_p$ under gluings for injections $i\colon[p]\to[n]$.
        By Remark~\ref{rem:formal-gluing-push}, it suffices to show that we can build $X$ from
        $i_\ast C_p$ and $i_\ast D_p$ for injections $i\colon[p]\to[q]$ where $q\leq n$.
        Let $m = n - k$ where $k$ is the number of label functions of the form $f_i = \neg x_{i-1}$
        (where $f_n$ doesn't count since there $\neg x_{n-1} = x_n\vee\neg x_{n-1}$).
        
        The induction start is as follows:
        let $i_0$ be the smallest index where $f_i$ differs from $x_i\vee\neg x_{i-1}$.
        We distinguish two cases:
        \begin{enumerate}[(i)]
            \item We have that $f_{i_0} = 1$.
                By Lemma~\ref{lem:insert_1s}, the $m$-cube $Y = (g_i)_i$ with label functions in $[m]$
                given by $g_i = x_i\vee\neg x_{i-1}$ for $i\neq i_0$ and $g_{i_0} = 1$,
                is glued from $C_m$ and $D_m$, and thus is in the gluing closure (and is degenerable).
                Note that the $m$-cube $Y'$ with the same label functions as $Y$ but $g'_m = 1$
                also can be glued by Lemma~\ref{lem:insert_1s}.
            \item We have that $f_{i_0} = \neg x_{i_0-1}$.
                By Lemma~\ref{lem:glue_C}, the ($m+1$)-cube $Y = (g_i)_i$ with label functions in $[m+1]$
                given by $g_i = x_i\vee\neg x_{i-1}$ for $i\neq i_0,i_0+1$ and
                $(g_{i_0},g_{i_0+1}) = (\neg x_{i_0-1},x_{i_0+1})$ can be generated:
                it is $\gamma^\ast V_m^{i_0}$ for an appropriate morphism of cubes $\gamma$
                that shifts $x_{i_0}$ to $x_{i_0+1}$, making $Y$ degenerable.
                Note that the ($m+1$)-cube $Y'$ with the same label functions as $Y$ but $g'_{m+1} = 1$
                also can be glued by Lemma~\ref{lem:glue_D}.
        \end{enumerate}
        In the induction step, suppose that we have constructed a degenerable $l$-cube $Y = (g_i)_i$ with labels in $[l]$ such that there exists $i_0$ such that for all $i < i_0$ we have that $f_i = g_i$, $f_{i_0}\ne g_{i_0}$ and
        $i_0$ is minimal w.r.t. this property.
        Suppose also the existence of the degenerable $l$-cube $Y'$ with labels in $[l]$ which agrees with $Y$ except for $g'_l = 1$.
        We increase $i_0$ by case distinction.
        \begin{enumerate}[(i)]
            \item If $f_{i_0} = 1$, we glue the $l$-cube $Z = (h_i)_i$ with labels in $[l]$
                that satisfy $h_i = g_i$ for $i\neq i_0$ and $h_{i_0} = 1$ from $Y$ and $Y'$,
                by Lemma~\ref{lem:insert_1s}:
                the only label functions that matter in the gluing are the ones at positions
                $i \geq i_0$, and there we have that $g_i = x_i\vee\neg x_{i-1}$ and $g'_l = 1$.
                Also, we obtain the $l$-cube $Z' = (h'_i)_i$ with $h'_i = h_i$ for $i<l$ and $h'_l = 1$.
                Both cubes $Z$ and $Z'$ are degenerable, and the minimal $i_1$ such that  $f_{i_1} \ne h_{i_1}$ satisfies $i_0 < i_1$.
            \item If $f_{i_0} = \neg x_{i_0-1}$, we can construct the ($l+1$)-cube $Z = (h_i)_i$
                with labels in $[l+1]$ satisfying
                \[
                    h_i = \begin{cases}
                        g_i,\quad & i<i_0, \\
                        \neg x_{i-1},\quad & i = i_0, \\
                        x_i,\quad & i = i_0+1, \\
                        x_i\vee\neg x_{i-1},\quad & i\geq i_0+2
                    \end{cases}
                \]
                by Lemma~\ref{lem:glue_C}
                (and the same cube $Z'$ but with $h'_{l+1} = 1$ by Lemma~\ref{lem:glue_D}).
                Indeed, $Z$ can be glued from $\delta_{i,\ast} Y$ for $i\geq i_0$,
                since in Lemma~\ref{lem:glue_C}, by Remark~\ref{rem:glue-C-inverse},
                the gluing for $V_l^{i_0}$ only depends on cube faces $[x_i = 0]$ for $i\geq i_0$
                and happens only in labels $i\geq i_0$, and there, $\delta_{i,\ast} Y$ agrees with $C_l^i$.
                Thus, this gluing procedure doesn't affect label functions $h_i$ for $i<i_0$.
                To make $Z$ degenerable, we afterwards apply a cube morphism to shift the coordinates
                one step higher, starting from $x_{i_0}$ (which is mapped to $x_{i_0+1}$).
                A similar argument works for $Z'$.
                There, one has to be careful about gluing from right to left, starting from $D_k^k$,
                since $\prod_{j=0}^l D_l^j$ is not $D_l^{l+1}$, but has as label functions
                \[
                    f_j = \begin{cases}
                        x_j\vee\neg x_{j-1},\quad & 0\leq j\leq l-1, \\
                        \neg x_{j-1},\quad & j = l, \\
                        1,\quad & j = l+1.
                    \end{cases}
                \]
                But this cube is just $D_{l+1}$ evaluated at $x_l = 0$, and therefore also exists.
                Thus, we can also apply Lemma~\ref{lem:glue_D} in the reverse gluing direction.
                Again, the minimal $i_1$ such that for all $i < i_1$ we have that $f_i = h_i$ satisfies $i_0 < i_1$.
        \end{enumerate}
        After a finite iteration, this procedure terminates (as soon as $l = n$ and there is no $i_0$ anymore with $f_{i_0}\neq g_{i_0}$)
        and the resulting cube is equal to $X$,
        since in each step where $f_{i_0} = \neg x_{i_0-1}$, the dimension of the cube is increased,
        and $k$ was chosen so that this dimension increase happens exactly $k$ times,
        therefore $m + k = n$.\qedhere
\end{enumerate}\end{proof}

We also have the reverse direction of Theorem~\ref{thm:C-D-glue-all-really}.

\begin{proposition}\label{prop:degenerable-glue-C-D}
Every $k$-cube $\phi_\ast C_k$ and $\phi_\ast D_k$ with labels in $[n]$ for some map $\phi\colon[k]\to[n]$ is generated, under gluing and cube morphisms, from the degenerable $n$-cubes.
\end{proposition}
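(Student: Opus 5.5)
We have to show that every $\phi_\ast C_k$ and $\phi_\ast D_k$, for $\phi\colon[k]\to[n]$ arbitrary, lies in the closure of the degenerable $n$-cubes under gluing and cube morphisms. The idea is to exhibit each such cube as a pullback of a degenerable $n$-cube along a cube morphism $\{0,1\}^k\to\{0,1\}^n$. Cube morphisms are allowed in the generation, so no gluing should be needed.

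First we reduce to the generator $C$ alone. By the argument in the proof of Proposition~\ref{cor:pushforward-generates}, $D_k$ is the face $[x_k=1]$ of $\sigma_{k,\ast}C_{k+1}$. Hence $\phi_\ast D_k$ is the pullback along a face map of $(\phi\circ\sigma_k)_\ast C_{k+1}$. So it suffices to treat $\phi_\ast C_k$ for all $\phi\colon[k]\to[n]$.

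Next factor $\phi=i\circ e$ with $e\colon[k]\twoheadrightarrow[m]$ surjective and $i\colon[m]\hookrightarrow[n]$ injective. By Remark~\ref{rem:insert-1s-pushforward}, $e_\ast C_k=D_I\circ\psi$ for the cube morphism $\psi$ given there and a subset $I\subset\{0,\dots,m\}$. Since pushforward of labels commutes with pullback along cube morphisms, this gives $\phi_\ast C_k=(i_\ast D_I)\circ\psi$.

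So it remains to show that $i_\ast D_I$ is a pullback of a degenerable $n$-cube. Write $i$ as the composite of face maps $\delta_j$. It is enough to do one $\delta_j\colon[m]\to[m+1]$ and iterate, keeping track of degenerability. The cube $\delta_{j,\ast}Y$ for a degenerable $m$-cube $Y$ inserts the constant label function $0$ at position $j$, and it lives on $\{0,1\}^m$. Define an $(m+1)$-cube $\tilde Y$ on $\{0,1\}^{m+1}$ by reindexing coordinates so that the old label $f_l$ becomes a function of the coordinates $x_{l'}$, $x_{l'-1}$, where $l'$ is the new position of label $l$. At the inserted position $j$ put the label function $0$.

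Degenerability of $\tilde Y$ is checked with the table of Proposition~\ref{prop:class-formal-degenerable}, exactly as in step (0) of the proof of Theorem~\ref{thm:C-D-glue-all-really}, read backwards. Choose the shift of coordinates so that the neighbours of the $0$ do not depend on the coordinate adjacent to it. Concretely, $f_{j-1}$ should involve only $x_{j-2}$ and $x_{j-1}$, and $f_{j+1}$ only $x_j$ and $x_{j+1}$. The extra coordinate $x_j$ then plays a free role, and the pair conditions $f_{j-1}\vee 0$ and $0\vee f_{j+1}$ reduce to independence statements already given by degenerability of $Y$. Then $\delta_{j,\ast}Y=\tilde Y\circ\gamma$ for the cube morphism $\gamma\colon\{0,1\}^m\to\{0,1\}^{m+1}$ that duplicates or omits the relevant coordinate, namely the retraction-section pair $\eta,\gamma$ of step (0). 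Starting the iteration from $D_I$, which is degenerable by Lemma~\ref{lem:insert_1s}, gives the claim.

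The main obstacle is this reindexing step. The two neighbours of the inserted $0$ may each depend on the shared coordinate; for example, $f_{j-1}=x_{j-1}\vee\neg x_{j-2}$ and $f_j=x_j\vee\neg x_{j-1}$ in $Y$. After inserting $0$, the label $f_j$ moves to position $j+1$ and must be rewritten in terms of $x_j$ and $x_{j+1}$. Its old dependence on $x_{j-1}$ has to be rerouted to the new coordinate $x_j$, and we need $\gamma$ to set $x_j:=x_{j-1}$. I would first check that with this choice the degenerability conditions at positions $(j-1,j)$ and $(j,j+1)$ hold; they do, since one side of each pair is constant $0$ and the other side is independent of the middle coordinate. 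If this fails near the endpoints $j=0$ or $j=m+1$, I would treat those cases directly using the conventions $x_{-1}=1$ and $x_n=0$.
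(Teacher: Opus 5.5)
Your preliminary reductions are fine: $D_k$ is the face $[x_k=1]$ of $\sigma_{k,\ast}C_{k+1}$, and the mono-epi factorization together with Remark~\ref{rem:insert-1s-pushforward} reduces everything to $i_\ast D_I$. The gap is your final step, the claim that $\delta_{j,\ast}Y$ is the pullback of a single degenerable $(m+1)$-cube, so that no gluing is needed. Your degenerability check of $\tilde Y$ is wrong because you use the wrong coordinate. In the pair $(j-1,j)$ the shared coordinate is $z_{j-1}$, so you need $g_{j-1}\vee 0=g_{j-1}$ to be independent of $z_{j-1}$. In the pair $(j,j+1)$ you need $g_{j+1}$ to be independent of $z_j$. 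In your own example, $g_{j-1}=z_{j-1}\vee\neg z_{j-2}$ and $g_{j+1}=z_{j+1}\vee\neg z_j$, and both conditions fail. By Proposition~\ref{prop:class-formal-degenerable}, a label $0$ only tolerates neighbours that do not depend on the coordinates adjacent to it. Degenerability of $Y$ only controls $f_{j-1}\vee f_j$, not $f_{j-1}$ and $f_j$ separately. Step (0) of the proof of Theorem~\ref{thm:C-D-glue-all-really} works only because there the neighbours of the $0$ already have this restricted form, so it cannot be read backwards for $Y=C_m$.

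No cleverer choice of $\tilde Y$ and $\gamma$ repairs this. Take $n=3$, $\phi=\delta_1$, so $\phi_\ast C_2=(x_0,\,0,\,x_1\vee\neg x_0,\,\neg x_1)$. Suppose this equals $\tilde Y\circ\gamma$ with $\tilde Y=(g_0,\dots,g_3)$ degenerable and $\gamma\colon\{0,1\}^2\to\{0,1\}^3$ a cube morphism. Since $g_0\circ\gamma=x_0$ is nonconstant, $g_0$ is a nonconstant function of $z_0$ alone, and $\gamma_0$ takes both values. Let $g_0(\epsilon)=0$. Independence of $g_0\vee g_1$ from $z_0$ then forces $g_1(\epsilon,\cdot)\equiv 1$. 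Evaluating at a point where $\gamma_0=\epsilon$ gives $g_1\circ\gamma\not\equiv 0$, a contradiction. So for interior insertions, gluing is genuinely necessary; only $j=0$ and $j=m+1$ work gluing-free, as you suggest.

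The missing idea is the paper's: use the cubes $V_k^i$ of Lemma~\ref{lem:glue_C}, where the inserted $0$ is traded for the pattern $(\neg x_{i-1},x_i)$. After shifting coordinates, these become degenerable, so they are pullbacks of degenerable cubes. One then recovers $C_k^i=V_k^{i-1}V_k^i$ (up to flips, Remark~\ref{rem:glue-C-inverse}) by a single gluing, and argues analogously for $D_k^i$ via Lemma~\ref{lem:glue_D}.
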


\begin{proof}
By Proposition~\ref{cor:pushforward-generates}, it suffices to show that the formal cubes $i_\ast C_k$ and $i_\ast D_k$ for $i\colon[k]\to[n]$ are generated under gluing from pullbacks of degenerable $n$-cubes.
But then, since pushforward of labels commutes with gluing by Remark~\ref{rem:formal-gluing-push},
it suffices to show it for $\delta_{i,\ast} C_k = C_k^i$ and $\delta_{i,\ast} D_k = D_k^i$.
But this directly follows from Lemmas~\ref{lem:glue_C} and~\ref{lem:glue_D}:
the coordinate-shifted version of $V_k^i$ is degenerable, and therefore after shifting coordinates back,
we can generate all $V_k^i$ with cube morphisms, and $C_k^i = V_k^{i-1}V_k^i$ (ignoring flips).
Similarly for $D_k^i$.
\end{proof}

\subsection{Generating Faces of \texorpdfstring{$C_{n+1}$}{C(n+1)}}\label{ssec:generating_faces}

In this subsection we show that every outer face of $C_{n+1}$ can be glued from the $C_n^i = \delta_{i,\ast}C_n$ and $D_n^i = \delta_{i,\ast}D_n$.

\begin{definition}
We denote by $C_{n+1}^{x_i=a} = C_{n+1}(x_i=a)$ the face $[x_i=a]$ of $C_{n+1}$.
\end{definition}

\begin{remark}
The label functions of $C_{n+1}^{x_i=a}$ are given by the ones of $C_n$ except at positions $i$ and $i+1$: if $x_i = 1$ (the upper face), then $(f_i,f_{i+1}) = (1,x_i)$ and if $x_i = 0$ (the lower face), then $(f_i,f_{i+1}) = (\neg x_{i-1},1)$.
Note the shift in the cube dimension.
\end{remark}
    
\begin{example}
The eight faces of
\[
    C_4=(x_0\vee \neg x_{-1}, x_1\vee \neg x_0, x_2\vee\neg x_1, x_3\vee\neg x_2,x_4\vee\neg x_3)
\]
(with $x_{-1}=1$ and $x_4=0$ as always) are
\begin{align*}
C_4^{x_0=1}
  &= (1,\ &&x_0,\ &&&x_1\vee \neg x_0,\ &&&&x_2\vee \neg x_1,\ &&&&&x_3\vee \neg x_2),\\
C_4^{x_0=0}
  &= (\neg x_{-1},\ &&1,\ &&&x_1\vee \neg x_0,\ &&&&x_2\vee \neg x_1,\ &&&&&x_3\vee \neg x_2),\\
C_4^{x_1=1}
  &= (x_0\vee \neg x_{-1},\ &&1,\ &&&x_1,\ &&&&x_2\vee \neg x_1,\ &&&&&x_3\vee \neg x_2),\\
C_4^{x_1=0}
  &= (x_0\vee \neg x_{-1},\ &&\neg x_0,\ &&&1,\ &&&&x_2\vee \neg x_1,\ &&&&&x_3\vee \neg x_2),\\
C_4^{x_2=1}
  &= (x_0\vee \neg x_{-1},\ &&x_1\vee \neg x_0,\ &&&1,\ &&&&x_2,\ &&&&&x_3\vee \neg x_2),\\
C_4^{x_2=0}
  &= (x_0\vee \neg x_{-1},\ &&x_1\vee \neg x_0,\ &&&\neg x_1,\ &&&&1,\ &&&&&x_3\vee \neg x_2),\\
C_4^{x_3=1}
  &= (x_0\vee \neg x_{-1},\ &&x_1\vee \neg x_0,\ &&&x_2\vee \neg x_1,\ &&&&1,\ &&&&&x_3),\\
C_4^{x_3=0}
  &= (x_0\vee \neg x_{-1},\ &&x_1\vee \neg x_0,\ &&&x_2\vee \neg x_1,\ &&&&\neg x_2,\ &&&&&1).
\end{align*}
and as pictures
are given by the eight 3-cubes
\begin{center}
\begin{tikzpicture}[scale=.9, every node/.style={transform shape}]

\begin{scope}[shift={(0,3.2)}]
  \smallcube{C_4^{x_0=1}}{023}{023}{0123}{013}{024}{0234}{0124}{0134}
\end{scope}

\begin{scope}[shift={(2.55,3.2)}]
  \smallcube{C_4^{x_1=1}}{123}{13}{0123}{013}{124}{134}{0124}{0134}
\end{scope}

\begin{scope}[shift={(5.10,3.2)}]
  \smallcube{C_4^{x_2=1}}{123}{123}{0123}{023}{124}{124}{0124}{024}
\end{scope}

\begin{scope}[shift={(7.65,3.2)}]
  \smallcube{C_4^{x_3=1}}{123}{123}{0123}{023}{13}{123}{013}{023}
\end{scope}

\begin{scope}[shift={(0,0)}]
  \smallcube{C_4^{x_0=0}}{123}{123}{123}{13}{124}{1234}{124}{134}
\end{scope}

\begin{scope}[shift={(2.55,0)}]
  \smallcube{C_4^{x_1=0}}{123}{123}{023}{023}{124}{1234}{024}{0234}
\end{scope}

\begin{scope}[shift={(5.10,0)}]
  \smallcube{C_4^{x_2=0}}{13}{123}{013}{023}{134}{1234}{0134}{0234}
\end{scope}

\begin{scope}[shift={(7.65,0)}]
  \smallcube{C_4^{x_3=0}}{124}{124}{0124}{024}{134}{1234}{0134}{0234}
\end{scope}
\end{tikzpicture}
\end{center}
\end{example}

The goal of this subsection is to prove the following theorem.

\begin{theorem}\label{thm:faces_of_Cn_generated}
The faces of $C_{n+1}$ are generated under gluing by $C_n^i$ and $D_n^i$ for $i\le n+1$.
\end{theorem}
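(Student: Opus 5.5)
We work directly with label functions. By the Remark preceding the statement, the face $C_{n+1}^{x_i=1}$ agrees with $C_n$ (suitably reindexed) except that $(f_i,f_{i+1})=(1,x_i)$. The face $C_{n+1}^{x_i=0}$ agrees except that $(f_i,f_{i+1})=(\neg x_{i-1},1)$. Both are $n$-cubes with labels in $[n+1]$. The plan is to realize each of them as an iterated gluing of the $C_n^j$ and $D_n^j$, using the partial products already computed in Lemmas~\ref{lem:glue_C} and~\ref{lem:glue_D}, together with the insertion-of-ones technique of Lemma~\ref{lem:insert_1s}.

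\textbf{Step 1 (lower faces).} First match $V_n^k$ of Lemma~\ref{lem:glue_C} against the faces. $V_n^k$ has labels $(x_j\vee\neg x_{j-1})_{j<k}$, $\neg x_{k-1}$, $x_k$, followed by the shifted $C_n$ labels. The face $C_{n+1}^{x_k=0}$ has the same labels except at position $k+1$, where it has $1$ instead of $x_k$. The ones-insertion trick should fix this: $V_n^k$ and the corresponding product of the $D_n^j$ from Lemma~\ref{lem:glue_D} differ only by a $1$ in the last label. We imitate the proof of Lemma~\ref{lem:insert_1s} and glue repeatedly along $[x_{n-1}=0],[x_{n-2}=0],\dots,[x_{k}=0]$, alternating a copy with last label $1$ against copies without it. This propagates the $1$ leftwards until it sits at position $k+1$.

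The check at each stage is local. As in Remark~\ref{rem:formal-gluing-label}, only the two labels adjacent to the gluing direction matter. These are exactly the computations $(x\wedge 1)\vee(\neg x\wedge 1)=1$ and $(x\wedge y)\vee\neg x=y\vee\neg x$ from Lemma~\ref{lem:insert_1s}. The boundary cases are $k=0$, where $C_{n+1}^{x_0=0}$ starts with $\neg x_{-1}=0$ and is therefore $\delta_{0,\ast}$ of a $D_I$-type cube, and $k=n$, where the face is literally $C_n$ with label $n+1$ appended, i.e.\ essentially $D_n^{\,\cdot}$-type data. In both cases the face should be directly a $C_n^0$ or $D_n^{n+1}$ (or $D_n^n$) up to the ones insertion.

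\textbf{Step 2 (upper faces).} For the upper faces we use the reversed products of Remark~\ref{rem:glue-C-inverse}, $\prod_{j=n+1}^{k+1}C_n^j$, and their $D$-analogues. By the symmetry of the gluing (gluing the other way), these should give cubes of the form $(\dots,x_{k-1}\vee\neg x_{k-2},\,\neg x_{k-1}\text{ or }1,\,x_k,\dots)$. We then insert a $1$ at position $k$ by the same leftward or rightward propagation argument, using one copy of the $D$-version whose extra $1$ sits in the relevant label.

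Alternatively, a direct computation may work better. Glue $C_{n+1}^{x_k=0}$ (obtained in Step 1) to the $V$-type cube along $[x_k=\cdot]$ and compute that this produces the $(1,x_k)$ pattern via $(x_k\wedge 1)\vee(\neg x_k\wedge\neg x_{k-1})$-type identities. I would try this second route first, since it reuses Step 1 and needs only one local three-label computation like the one in the proof of Lemma~\ref{lem:glue_C}.

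\textbf{Main obstacle.} The main obstacle is bookkeeping. We must ensure that every intermediate cube is actually a gluing of the allowed generators $C_n^i,D_n^i$ only, without flips or cube morphisms, since the theorem permits only gluing. We must also verify at every step that the glued faces genuinely agree, including the conventions $x_{-1}=1$ and $x_n=0$ at the endpoint labels $0$ and $n+1$. For this reason I would first verify the cases $n=2,3$ against the pictured faces of $C_4$ and the pictured $C_3^i,D_3^i$ to pin down the correct gluing order, and then write the general induction.
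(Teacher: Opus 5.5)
Your Step 1 is correct, and it is not how the paper proceeds. By Lemma~\ref{lem:glue_D}, $\prod_{j\le k}D_n^j$ differs from $V_n^k$ only by a $1$ in label $n+1$. Gluing copies of $V_n^k$ to it along $[x_{n-1}=0],\dots,[x_k=0]$ moves that $1$ down one label at a time, exactly as in Lemma~\ref{lem:insert_1s}. The last step, along $[x_k=0]$, meets $(x_k,1)$ against $(x_k,x_{k+1}\vee\neg x_k)$ and gives $(1,x_{k+1}\vee\neg x_k)$, so you land on $C_{n+1}^{x_k=0}$. This also covers $k=0$ and $k=n$; for $k=n$ the face is $\prod_{j\le n}D_n^j$ itself, not $D_n^n$. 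The paper never builds these lower faces directly. It constructs the diagonals $S_{n+1}^k=W_n^kV_n^{k-1}$ and recovers both kinds of faces from them via Corollary~\ref{cor:diagonals-suffice}.

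The gap is Step 2: no upper face other than $C_{n+1}^{x_n=1}=D_n^{n+1}$ is actually produced. The face $C_{n+1}^{x_k=1}$ differs from $V_n^k$ only in label $k$, which is $1$ instead of $\neg x_{k-1}$.

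\emph{Your preferred direct route cannot create that $1$.} Gluing along $x_k$ yields label $k$ as $(x_k\wedge a)\vee(\neg x_k\wedge b)$, where $a,b$ are the corresponding restrictions of the two inputs. Both $C_{n+1}^{x_k=0}$ and $V_n^k$ carry the $x_k$-independent function $\neg x_{k-1}$ there, so the result is never $1$.

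\emph{The other route does not help either.} The reversed products of Remark~\ref{rem:glue-C-inverse} just return $V_n^k$ up to flips. No generator has a constant $1$ in a label below $n$ that could serve as the ``$D$-version''. The propagation of Step 1 also cannot be pushed one label further. That would require gluing $C_{n+1}^{x_k=0}$ to $V_n^k$ along $x_{k-1}$, but their label $k+1$ is $1$ versus $x_k$, both $x_{k-1}$-independent and different, so they share no face in that direction.

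\emph{The missing idea} is the diagonal identity $X(x_{k+1}=0)X(x_k=0)=X(x_k=1)X(x_{k+1}=1)=X(1-x_k=x_{k+1})$ for an $(n+1)$-cube $X$. With it your Step 1 finishes the proof. Gluing adjacent lower faces gives every $S_{n+1}^{k+1}$. Gluing $S_{n+1}^{k+1}$ with $C_{n+1}^{x_{k+1}=1}$ along $[x_k=0]$ gives $C_{n+1}^{x_k=1}$ flipped in $x_k$. Downward induction from $D_n^{n+1}$ then yields all upper faces and bypasses the cubes $W_n^k$ entirely.

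The flip is harmless, contrary to your stated obstacle. Gluing $A$ with itself along $[x_j=0]$ gives the degenerate cube $A'=A\rvert_{x_j=1}\circ p_j$. Gluing $A$ with $A'$ along $[x_j=1]$ then gives $A$ flipped in $x_j$, so the gluing closure is automatically closed under flips.
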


There are two easy faces.

\begin{remark}\label{rem:C-faces-simple}
Lemma~\ref{lem:glue_D} directly implies that the faces
\[
    C_{n+1}^{x_n=1}=D_n^{n+1}\quad\text{and}\quad C_{n+1}^{x_n=0}=\prod_{j\le n} D_n^j
\]
can be glued from the $D_n^i$'s.
\end{remark}

From there, we approach the construction of the other faces of $C_{n+1}$.
We do so by observing that the $i$-th and ($i+1$)-th upper, resp. lower, faces have a common intersection, the \emph{($i+1$)-th diagonal $S_{n+1}^{i+1}$},
which is a diagonal face of the $n+1$-cube, namely $[1-x_i = x_{i+1}]$.

\begin{proposition}
Let $X$ be a formal $n$-cube with labels in $[m]$.
Then for each $0\le i<n-1$, the formal ($n-1$)-cubes $X^{x_i=0}=X(x_i=0)$ and $X(x_{i+1}=0)$ agree on the face $[x_i=0]$,
as do the ($n-1$)-cubes $X(x_i=1)$ and $X(x_{i+1}=1)$ on the face $[x_i=1]$.
Moreover, their gluing agrees and is equal to the diagonal $[1-x_i = x_{i+1}]$, in formulas
\[
    X(x_{i+1}=0)X(x_i=0) = X(x_i=1)X(x_{i+1}=1) = X(1-x_i = x_{i+1}).
\]
\end{proposition}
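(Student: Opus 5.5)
The proof is a direct pointwise computation from Definition~\ref{def:formal-gluing}, once the coordinates of the faces are made explicit. I first fix the parametrizations. Write $y=(y_0,\ldots,y_{n-2})$ for the coordinates of an $(n-1)$-cube. The face $X(x_i=\eps)$ is the pullback of $X$ along the face map $y\mapsto(y_0,\ldots,y_{i-1},\eps,y_i,\ldots,y_{n-2})$. Under this map the old coordinate $x_{i+1}$ becomes the new coordinate $y_i$. The face $X(x_{i+1}=\eps)$ is the pullback along $y\mapsto(y_0,\ldots,y_i,\eps,y_{i+1},\ldots,y_{n-2})$, so there $y_i$ is still the old $x_i$. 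Finally, I interpret the diagonal $X(1-x_i=x_{i+1})$ as the pullback of $X$ along the affine cube morphism
\[
  \Delta\colon y\mapsto(y_0,\ldots,y_{i-1},\,y_i,\,1-y_i,\,y_{i+1},\ldots,y_{n-2}),
\]
which lies in $\bbox$ because it is affine and preserves $\{0,1\}$-points.

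For the agreement statement I evaluate both cubes on the face $[y_i=0]$. Restricting $X(x_i=0)$ to $y_i=0$ sets $x_{i+1}=0$. Restricting $X(x_{i+1}=0)$ to $y_i=0$ sets $x_i=0$. Hence both restrictions equal $X(x_i=0,x_{i+1}=0)$, with the remaining coordinates in the same order. The same argument shows that $X(x_i=1)$ and $X(x_{i+1}=1)$ both restrict on $[y_i=1]$ to $X(x_i=1,x_{i+1}=1)$. So both gluings are defined, with $j_0=i$ and $\eps=0$, respectively $\eps=1$.

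Next I compute the gluings with the case formula of Definition~\ref{def:formal-gluing}.

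For $\eps=0$, set $A=X(x_{i+1}=0)$ and $B=X(x_i=0)$.
\begin{itemize}
  \item At points with $y_i=1$, the glued cube equals $A\rvert_{y_i=1}=X(x_i=1,x_{i+1}=0)$.
  \item At points with $y_i=0$, it equals $B\rvert_{y_i=1}=X(x_i=0,x_{i+1}=1)$.
\end{itemize}
In both cases the value at $y$ is $X(\Delta(y))$.

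For $\eps=1$, set $A=X(x_i=1)$ and $B=X(x_{i+1}=1)$.
\begin{itemize}
  \item At points with $y_i=1$, the glued cube equals $A\rvert_{y_i=0}=X(x_i=1,x_{i+1}=0)$.
  \item At points with $y_i=0$, it equals $B\rvert_{y_i=0}=X(x_i=0,x_{i+1}=1)$.
\end{itemize}
Again this is $X\circ\Delta$. Thus both gluings coincide with the diagonal, which is the claimed identity.

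The only real obstacle is bookkeeping. One must track which old coordinate becomes the new $y_i$ in each of the two faces. One must also place the cubes in the order matching the paper's convention for $AB$: $A$ is on the $x_{j_0}=1$ side and composition is read right to left. With the wrong order, the result would be the diagonal precomposed with a flip in $y_i$. I would therefore state the parametrization $\Delta$ explicitly and check the two cases $y_i\in\{0,1\}$ as above. Alternatively, the same verification can be done on label functions via Remark~\ref{rem:formal-gluing-label}, since each $f_j$ transforms by the same substitutions.
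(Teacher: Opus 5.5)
Your proposal is correct and follows essentially the same route as the paper. The paper's proof also does a pointwise case check on $x_i\in\{0,1\}$ using the formula from Definition~\ref{def:formal-gluing}, and its interpretation of the diagonal matches your $\Delta$; the only cosmetic difference is that the paper obtains the agreement on the common face by citing the cubical identity $\epsilon_{i+1}^{n-1}\circ\epsilon_i^{n-2}=\epsilon_i^{n-1}\circ\epsilon_i^{n-2}$, whereas you verify it directly in coordinates.
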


\begin{proof}
Denote by $\epsilon_i^{n-1}\colon\{0,1\}^{n-1}\to\{0,1\}^n$ the inert inclusion onto the face $[x_i = 0]\subset\{0,1\}^n$.
By~\cite[Remark 3.1.2]{Bihlmaier2026} we have that $\epsilon_{i+1}^{n-1}\circ\epsilon_i^{n-2} = \epsilon_i^{n-1}\circ\epsilon_i^{n-2}$
which means that $X(x_i=0)$ and $X(x_{i+1}=0)$ agree on the face $[x_i=0]$.
The same holds for $0$ replaced by $1$.

Denote by $A$ the gluing $X(x_{i+1}=0)X(x_i=0)$,
by $B$ the gluing $X(x_i=1)X(x_{i+1}=1)$ and by $C$ the cube $X(1-x_i = x_{i+1})$.
Then, if $x_i = 0$, we have by the definition of formal gluing~\ref{def:formal-gluing} that
\begin{align*}
    A(x) &= X(x_i=0,x_{i+1}=1), \\
    B(x) &= X(x_{i+1}=1,x_i=0), \\
    C(x) &= X(x_i=0,1-x_i=x_{i+1}) = X(x_i=0,x_{i+1}=1)
\end{align*}
and for $x_i = 1$ we obtain
\begin{align*}
    A(x) &= X(x_{i+1}=0,x_i=1), \\
    B(x) &= X(x_i=1,x_{i+1}=0), \\
    C(x) &= X(x_i=1,1-x_i=x_{i+1}) = X(x_i=1,x_{i+1}=0).
\end{align*}
A close inspection yields that $A = B = C$.
\end{proof}

\begin{corollary}\label{cor:diagonals-suffice}
Let $X$ be a formal $n$-cube with labels in $[m]$.
Then each lower face $[x_i=0]$ of $X$ can be glued from the lower face $X(x_{n-1}=0)$ and the diagonals $X(1-x_i=x_{i+1})$.
Analogously, every upper face $[x_i=1]$ of $X$ can be glued from the upper face $X(x_{n-1}=1)$ and the diagonals $X(1-x_i=x_{i+1})$.
\end{corollary}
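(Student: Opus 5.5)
The plan is to deduce this directly from the preceding proposition, which identifies the gluing of two adjacent codimension-one faces with a diagonal. We proceed by downward induction on $i$, separately for lower and for upper faces.

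For lower faces, the base case $i=n-1$ is trivial: the face $X(x_{n-1}=0)$ is itself one of the allowed generators. For the induction step, let $0\le i<n-1$ and assume $X(x_{i+1}=0)$ has already been glued from $X(x_{n-1}=0)$ and the diagonals. The proposition gives
\[
    X(x_{i+1}=0)\,X(x_i=0)=X(1-x_i=x_{i+1}),
\]
where $X(x_{i+1}=0)$ and $X(x_i=0)$ agree on the face $[x_i=0]$. We then invoke that gluing is self-inverse up to flips, as already used in Remark~\ref{rem:glue-C-inverse}. Concretely, the diagonal $X(1-x_i=x_{i+1})$ and $X(x_{i+1}=0)$ agree on a common face in direction $x_i$. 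Gluing them back in that direction recovers $X(x_i=0)$, possibly after a flip in $x_i$.

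We should check this directly from Definition~\ref{def:formal-gluing}. Write $A=X(x_{i+1}=0)$, $B=X(x_i=0)$ and $G=AB$, glued along $[x_i=\eps]$. Then
\[
    G\rvert_{x_i=1}=A\rvert_{x_i=\neg\eps}\quad\text{and}\quad G\rvert_{x_i=0}=B\rvert_{x_i=\neg\eps}.
\]
So $G$ and $A$ share the face $A\rvert_{x_i=\neg\eps}$. Gluing $G$ with $A$ along that face produces a cube whose two $x_i$-faces are $B\rvert_{x_i=\neg\eps}$ and $A\rvert_{x_i=\eps}=B\rvert_{x_i=\eps}$. This cube is $B$ up to a flip of $x_i$. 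Hence $X(x_i=0)$ is generated, which completes the induction.

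For upper faces, the same argument applies verbatim. We start from $X(x_{n-1}=1)$ and use the identity $X(x_i=1)X(x_{i+1}=1)=X(1-x_i=x_{i+1})$ from the proposition. We then solve for $X(x_i=1)$ by regluing the diagonal with $X(x_{i+1}=1)$.

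The only delicate point is the bookkeeping of which face is shared and the resulting flips. This is why I would expect the statement to be read ``up to flips'', consistent with the paper's convention of omitting flips notationally. If flips are not permitted as generators, I would instead arrange the regluing orientation ($\eps$ versus $\neg\eps$) so that the output lands exactly on $B$. Choosing the face $[x_i=\eps]$ appropriately in the second gluing should achieve this, since the order of the two cubes in the gluing determines which one sits at $x_i=1$.
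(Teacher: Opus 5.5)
Your proof is correct and is essentially the argument the paper leaves implicit. You run a downward induction from $X(x_{n-1}=0)$ (resp.\ $X(x_{n-1}=1)$), using the preceding proposition together with the fact that gluing is self-inverse (cf.\ Remark~\ref{rem:glue-C-inverse}).

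Your hedge about flips can be dropped, and in the upper case the roles are mirrored rather than verbatim. Writing $G$ for the diagonal, in the lower case gluing $G\,X(x_{i+1}=0)$ along $[x_i=1]$ gives exactly $X(x_i=0)$. In the upper case the known cube is the right-hand factor, and gluing $X(x_{i+1}=1)\,G$ along $[x_i=0]$ gives exactly $X(x_i=1)$.
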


In the case of $X=C_{n+1}$, the diagonals $C_{n+1}(1-x_{i-1}=x_i) = S_{n+1}^i$ take the following form
\[
    f_j^{S_{n+1}^{i}} =
	\begin{cases}
	    x_j\vee\neg x_{j-1},\quad &0\le j< i, \\
	    \neg x_{i-1},\quad &j=i, \\
        x_{i-1}\vee x_{i},\quad &j=i+1\quad\text{(no negation!)}, \\
	    x_{j-1}\vee\neg x_{j-2}&i+2\le j\le n+1.
    \end{cases}
\]
for $0<i\leq n$.

\begin{example}
For $n=3$, the diagonals look as follows.
\begin{center}
\begin{tikzpicture}
\begin{scope}[shift={(0,0)}]
  \smallcube{S_4^1}{023}{023}{123}{13}{024}{0234}{124}{134}
\end{scope}

\begin{scope}[shift={(2.8,0)}]
  \smallcube{S_4^2}{13}{123}{013}{023}{134}{124}{0134}{024}
\end{scope}

\begin{scope}[shift={(5.6,0)}]
  \smallcube{S_4^3}{13}{123}{013}{023}{124}{124}{0124}{024}
\end{scope}
\end{tikzpicture}
\end{center}
Note that $S_{n+1}^i$ only contains one ($n+1$)-tuple, which is the same as the one from $D_n^{i}$.
\end{example}

To arrive at the theorem, we thus need to construct the diagonals $S_{n+1}^k$.
The first ingredient is a surprising iterated composition of $C_n^k$ with $D_n^k$.
By their inductive definition, it is easy to see that $C_n^k$ and $D_n^k$ share one common side.
However, we are able to continue composing with the same $C_n^k$ in different directions.

\begin{lemma}\label{lem:gluing-W}
Let $0<k\leq n$ and define the formal cube $W_n^k$ by the formula
\[
    f_j^{W_n^k}=\begin{cases}
    x_j\vee\neg x_{j-1},\quad & 0\le j\le k-1, \\
    0,\quad & j=k, \\
    1,\quad &j=k+1, \\
    x_{j-1}\vee\neg x_{j-2},\quad & k+2\le j\le n+1.
    \end{cases}
\]
Then $W_n^k$ is generated under gluing by $D_n^k$ and $C_n^k$,
where $D_n^k$ is used once and $C_n^k$ is used ($n-k$)-times.
\end{lemma}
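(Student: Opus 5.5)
The plan is to reduce the statement to Lemma~\ref{lem:insert_1s}, in the singleton case described in Remark~\ref{rem:insert_1s}, and then transport it along the face map $\delta_k\colon[n]\to[n+1]$ using Remark~\ref{rem:formal-gluing-push}. The key observation is that $W_n^k$ should be exactly $\delta_{k,\ast}D_{\{k\}}$, where $D_{\{k\}}$ is the $n$-cube of Lemma~\ref{lem:insert_1s} with labels in $[n]$ and $I=\{k\}$.

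First I will check this identification on label functions, using Example~\ref{ex:formal-push} (equivalently Remark~\ref{rem:C-D-push}). The cube $D_{\{k\}}$ has $f_j=x_j\vee\neg x_{j-1}$ for $j\neq k$ and $f_k=1$. Pushing forward along $\delta_k$ does three things:
\begin{itemize}
\item it keeps labels $j\le k-1$ unchanged;
\item it inserts the constant $0$ at position $k$;
\item it moves the old label $j\ge k$ to position $j+1$.
\end{itemize}
The resulting cube therefore has $f_j=x_j\vee\neg x_{j-1}$ for $j<k$, $f_k=0$, $f_{k+1}=1$, and $f_j=x_{j-1}\vee\neg x_{j-2}$ for $k+2\le j\le n+1$. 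This is precisely the formula defining $W_n^k$. The only point needing care is that the convention $x_n=0$ matches at the top label $n+1$, but this is the same convention used in $C_n^i$, so there is nothing new to check. In the same way, $\delta_{k,\ast}C_n=C_n^k$ and $\delta_{k,\ast}D_n=D_n^k$ by Definition~\ref{def:C-D-push}.

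Second, Remark~\ref{rem:insert_1s} tells me that $D_{\{k\}}$ arises from $D_n$ by gluing $C_n$ to it $(n-k)$ times, along the axes $x_{n-1},x_{n-2},\dots,x_k$, each time along the lower face. Here $D_n$ is used once and $C_n$ is used $(n-k)$ times. By Remark~\ref{rem:formal-gluing-push}, each of these gluings remains admissible after applying $\delta_{k,\ast}$, and pushforward commutes with gluing. Consequently
\[
W_n^k=\delta_{k,\ast}\bigl(C_n\cdots C_nD_n\bigr)=C_n^k\cdots C_n^kD_n^k,
\]
with the same count of factors. This gives the claim, including the stated multiplicities.

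I do not expect a real obstacle. The only thing to double-check is the hypothesis $0<k\le n$. With $k\le n$ the singleton $\{k\}$ is a valid subset for Lemma~\ref{lem:insert_1s}, and the case $k=n$ gives zero gluings, so $W_n^n=D_n^n$; this is consistent because the formula for $W_n^n$ has $f_{n+1}=1$. For a direct sanity check, I would also verify by hand that in the first gluing step, $D_n^k$ and $C_n^k$ agree on $[x_{n-1}=0]$ in labels $n$ and $n+1$; this mirrors the computation for $W^0$ and $D_J$ in the proof of Lemma~\ref{lem:insert_1s}.
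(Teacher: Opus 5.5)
Your proposal is correct and follows the paper's proof: identify $W_n^k=\delta_{k,\ast}D_{\{k\}}$, build $D_{\{k\}}$ from one copy of $D_n$ and $n-k$ copies of $C_n$ via Lemma~\ref{lem:insert_1s} and Remark~\ref{rem:insert_1s}, and then push the gluings forward along $\delta_k$ using Remark~\ref{rem:formal-gluing-push}. Your label-function verification and your treatment of the edge case $k=n$, where $W_n^n=D_n^n$, are both accurate.
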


\begin{proof}
Let $0<k\leq n$. By Lemma~\ref{lem:insert_1s}, the cube $D_{\{k\}}$ is generated under gluing by $C_n$ and $D_n$.
Now $W_n^k = \delta_{k,\ast} D_{\{k\}}$ and so by Remark~\ref{rem:formal-gluing-push}, $W_n^k$ is generated under gluing by $C_n^k = \delta_{k,\ast}C_n$ and $D_n^k = \delta_{k,\ast}D_n$.
The last assertion follows from Remark~\ref{rem:insert_1s}.
\end{proof}

Recall the cubes $V_n^k$ from Lemma~\ref{lem:glue_C} which are obtained by an iterated gluing of the $C_n^i$.

\begin{lemma}\label{lem:gluing-S}
Let $0<k\le n$.
Gluing $W_n^k$ and $V_n^{k-1}$ along the face $[x_{k-1}=0]$ yields $S_{n+1}^k$.
\end{lemma}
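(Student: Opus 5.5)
This is a direct label-function computation using Remark~\ref{rem:formal-gluing-label}. We compare $W_n^k$ (Lemma~\ref{lem:gluing-W}) with $V_n^{k-1}$ (Lemma~\ref{lem:glue_C}), label by label.

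For $0\le j\le k-2$ both cubes have $f_j=x_j\vee\neg x_{j-1}$. For $k+2\le j\le n+1$ both have $f_j=x_{j-1}\vee\neg x_{j-2}$. These labels coincide and do not depend on $x_{k-1}$, except $f_{k-2}$ when the index allows, which is the same in both cubes. Since $x_{k-1}=0$ or $1$ is plugged into identical functions, the glued cube keeps them unchanged on the $x_{k-1}=1$ half. On the other half the value is read off the face $[x_{k-1}=1]$, where identical functions give identical values. I will double-check this for $f_{k-1}$ and $f_k$ of the relevant range, since the gluing formula evaluates at $x_{j_0}=\neg\eps=1$. So only the labels $j\in\{k-1,k,k+1\}$ matter. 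There
\[
(f_{k-1},f_k,f_{k+1})^{W_n^k}=(x_{k-1}\vee\neg x_{k-2},\,0,\,1),\qquad
(f_{k-1},f_k,f_{k+1})^{V_n^{k-1}}=(\neg x_{k-2},\,x_{k-1},\,x_k\vee\neg x_{k-1}).
\]
For $V_n^{k-1}$ I read $f_{k+1}=x_k\vee\neg x_{k-1}$ from the case $j\ge (k-1)+2$.

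\emph{Step 1.} Setting $x_{k-1}=0$ gives $(\neg x_{k-2},0,1)$ in both cubes, so the gluing along $[x_{k-1}=0]$ is allowed.

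\emph{Step 2.} Evaluate the glued cube. Following the convention of Lemma~\ref{lem:glue_C}, where $V_n^{k}=C_n^{k}V_n^{k-1}$ put the new cube on the $x=1$ side, I take $W_n^kV_n^{k-1}$:
\[
h=(x_{k-1}\wedge W|_{x_{k-1}=1})\vee(\neg x_{k-1}\wedge V|_{x_{k-1}=1}).
\]
At $x_{k-1}=1$, $W$ gives $(1,0,1)$ and $V$ gives $(\neg x_{k-2},1,x_k)$. Hence
\[
h_{k-1}=x_{k-1}\vee\neg x_{k-2},\qquad h_k=\neg x_{k-1},\qquad h_{k+1}=x_{k-1}\vee x_k.
\]

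\emph{Step 3.} These are exactly the labels of $S_{n+1}^k$ at positions $k-1,k,k+1$: note the unnegated $x_{k-1}\vee x_k$. The remaining labels agree by the first paragraph, so the result is $S_{n+1}^k$.

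The main obstacle is bookkeeping rather than anything conceptual. I must be careful about which cube sits on the $x_{k-1}=1$ side, since the opposite order gives a flip of $S_{n+1}^k$. I also need to treat the boundary cases $k=1$ (where $x_{-1}=1$, so $\neg x_{k-2}=0$) and $k=n$ (where the convention $x_n=0$ enters $f_{n+1}$) using the stated conventions. If the order comes out reversed, I will either swap the gluing order or invoke the paper's "up to flips" convention.
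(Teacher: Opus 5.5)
Your proof is correct and is the same computation as in the paper: you isolate labels $k-1,k,k+1$, check that both cubes restrict to $(\neg x_{k-2},0,1)$ at $x_{k-1}=0$, and evaluate $W_n^kV_n^{k-1}$ with $W_n^k$ on the $x_{k-1}=1$ side to get $(x_{k-1}\vee\neg x_{k-2},\,\neg x_{k-1},\,x_{k-1}\vee x_k)$. One small cleanup: you do not need an exception for $f_{k-2}=x_{k-2}\vee\neg x_{k-3}$, since it does not involve $x_{k-1}$; it is this independence from $x_{k-1}$, and not merely the equality of the two cubes' labels, that guarantees the untouched labels survive the gluing unchanged.
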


\begin{proof}
Recall the label functions of $W_n^k$ and $V_n^{k-1}$.
They only differ from each other (and from the desired result) in labels $k-1$, $k$ and $k+1$ and contain no $x_{k-1}$ outside of those, so we focus on them.
They are given by
\begin{align*}
	W_n^k=(&x_{k-1}\vee\neg x_{k-2}, &&0,&&&1)\\
    V_n^{k-1}=(&\neg x_{k-2},&&x_{k-1},&&&x_{k}\vee\neg x_{k-1})\\
\end{align*}
Plugging in $x_{k-1}=0$ yields $(\neg x_{k-2}, 0,1)$ for both and thus we may glue, resulting in
\[
    W_n^k V_{n}^{k-1}=(x_{k-1}\wedge W_n^k\rvert_{x_{k-1}=1})\vee(\neg x_{k-1}\wedge V_n^{k-1}\rvert_{x_{k-1}=1}),
\]
which in the relevant coordinates computes to
\begin{align*}
	(x_{k-1}\wedge (1,0,1))\vee (\neg x_{k-1}\wedge (\neg x_{k-2}, 1, x_k))
    &=(x_{k-1}\vee (\neg x_{k-1}\wedge \neg x_{k-2}), \neg x_{k-1}, x_{k-1}\vee(\neg x_{k-1}\wedge x_k))\\
	&=(x_{k-1}\vee\neg x_{k-2},\neg x_{k-1}, x_{k}\vee x_{k-1}).
\end{align*}
So we obtain the label functions of $S_{n+1}^k$.
\end{proof}

This concludes the proof of Theorem~\ref{thm:faces_of_Cn_generated}, let us summarize how.

\begin{proof}[Proof of Theorem~\ref{thm:faces_of_Cn_generated}]
By Remark~\ref{rem:C-faces-simple}, the faces $C_{n+1}^{x_n=1}$ and $C_{n+1}^{x_n=0}$ are generated under gluing by $C_n^i$ and $D_n^i$.
Corollary~\ref{cor:diagonals-suffice} shows that it thus suffices to generate all diagonals $S_{n+1}^k$
under gluing.
By Lemma~\ref{lem:glue_C}, the cubes $V_n^i$ and by Lemma~\ref{lem:gluing-W}, the cubes $W_n^i$ are glued from the $C_n^i$ and $D_n^i$.
Therefore, by Lemma~\ref{lem:gluing-S}, also the diagonals are glued from $C_n^i$ and $D_n^i$.
\end{proof}

The proof of the theorem also implies the following.

\begin{corollary}\label{rem:D-faces}
The faces $D_{n+1}^{x_k=0}$ of $D_{n+1}$ for $0\leq k\leq n$ are generated under gluing by $D_n^i$ for $0\leq i\leq n+1$.
\end{corollary}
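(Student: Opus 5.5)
The plan is to reduce the statement to the proof of Theorem~\ref{thm:faces_of_Cn_generated} using a label operation that commutes with gluing. For a formal cube $A=(f_j)_{j=0}^{n+1}$ with labels in $[n+1]$, let $A^+$ be the cube with the same label functions except $f_{n+1}^{A^+}\equiv 1$. Then $D_{n+1}=C_{n+1}^+$ by definition, and taking faces commutes with $(-)^+$. Hence $D_{n+1}^{x_k=0}=(C_{n+1}^{x_k=0})^+$.

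\textbf{Step 1: $(-)^+$ respects gluing.} Suppose $A$ and $B$ can be glued along $[x_{j_0}=\eps]$. Then $A^+$ and $B^+$ agree on that face as well, since the only changed label is constant on both. By Remark~\ref{rem:formal-gluing-label}, a label function that coincides in both cubes and does not depend on $x_{j_0}$ is unchanged by gluing. Since the other labels are computed exactly as before, $(AB)^+=A^+B^+$. Consequently, if a cube is glued from a family of cubes, then its $(-)^+$ is glued from their $(-)^+$ by the same gluing pattern.

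\textbf{Step 2: $(-)^+$ on the generators.} By Remark~\ref{rem:C-D-push}, for $i\le n$ the cube $C_n^i$ has top label function $f_{n+1}=f_n^{C_n}$, while $D_n^i$ has top label function $f_n^{D_n}=1$. All other labels of $C_n^i$ and $D_n^i$ agree. Hence
\[
(C_n^i)^+=D_n^i \quad\text{and}\quad (D_n^i)^+=D_n^i \qquad \text{for } 0\le i\le n.
\]

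\textbf{Step 3: only generators with $i\le n$ are used for lower faces.} I would retrace the proof of Theorem~\ref{thm:faces_of_Cn_generated} restricted to lower faces. By Corollary~\ref{cor:diagonals-suffice}, each $C_{n+1}^{x_k=0}$ is glued from $C_{n+1}^{x_n=0}$ and the diagonals $S_{n+1}^i$. The ingredients are built as follows:
\begin{itemize}
\item $C_{n+1}^{x_n=0}=\prod_{j\le n}D_n^j$ by Remark~\ref{rem:C-faces-simple}.
\item $S_{n+1}^k=W_n^kV_n^{k-1}$ by Lemma~\ref{lem:gluing-S}.
\item $W_n^k$ uses only $C_n^k$ and $D_n^k$ with $0<k\le n$, by Lemma~\ref{lem:gluing-W}.
\item $V_n^{k-1}$ uses only $C_n^j$ with $j\le k-1<n$, by Lemma~\ref{lem:glue_C}.
\end{itemize}
None of these ingredients involves $C_n^{n+1}$ or $D_n^{n+1}$. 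Applying $(-)^+$ and Steps 1 and 2, every $D_{n+1}^{x_k=0}$ is glued from the $D_n^i$ with $i\le n$.

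\textbf{Main obstacle.} The delicate point is confirming that the lower faces really avoid the index $n+1$. One must check that the gluing in Corollary~\ref{cor:diagonals-suffice} for lower faces only needs the diagonals together with $X(x_n=0)$, and never the upper face $C_{n+1}^{x_n=1}=D_n^{n+1}$. This matters because $(D_n^{n+1})^+$ has labels $n$ and $n+1$ both constant $1$, so it is not one of the given generators.

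If the bookkeeping at some index fails, the fallback is the observation from the proof of Lemma~\ref{lem:glue_D}. There, $D_n^k$ arises as the face $[x_n=1]$ of $\sigma_{n+1,\ast}C_{n+1}^k$ for $k\le n$. One can then transport the whole construction by $\sigma_{n+1,\ast}$, which commutes with gluing by Remark~\ref{rem:formal-gluing-push}.
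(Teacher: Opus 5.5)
Your proof is correct and follows the paper's own argument. The paper likewise notes that the lower faces of $C_{n+1}$ are built only from $C_n^i$ and $D_n^i$ with $i\le n$, and then replaces each $C_n^i$ by $D_n^i$, which is your Steps 2 and 3. Your operator $(-)^+$ with $(AB)^+=A^+B^+$ is a cleaner justification of that substitution than the paper's remark that the gluings only affect labels $\le n$. That remark is not literally true: when building $W_n^k$ the label $n+1$ does change. Your Step 1 makes the remark unnecessary.
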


\begin{proof}
The cube $D_{n+1}$ is the same as $C_{n+1}$ but with a $1$ for the last label
and the same holds for all $D_n^i$ with $0\leq i\leq n$.
Note that in generating the faces $C_{n+1}^{x_i=0}$ we only used $D_n^i$ and $C_n^i$ for $i\leq n$
and the gluings only affected labels $\leq n$.
Therefore, the same argument holds true for $D_{n+1}^{x_k=0}$ substituting each appearance of $C_n^i$ with $D_n^i$.
\end{proof}

\begin{remark}
Note that for the faces of $C_{n+1}$, we used $D_{n}^{n+1}$ exactly once,
namely to show existence of $C_{n+1}^{x_n=1}$.
Indeed, one can't obtain any face $D_{n+1}^{x_k=1}$ by gluings of $C_n^i$'s or $D_n^i$'s,
since at the point $(1,\ldots,1)$, the full value $\{0,\dots,n+1\}$ occurs in $D_{n+1}^{x_k=1}$.
\end{remark}

\section{Proof of the Main Theorem}

In this section, we define the functor $K^*\colon\cSet\to\sSet$ and prove our main result.

\begin{definition}\label{def:kernel}
Define a functor $K\colon{\bbox}^\op\times\bbDelta\to\set$ 
as the subfunctor of the functor
\begin{center}
\begin{tikzcd}
	{{\bbox}^\op\times\bbDelta} & {\set^\op\times\set} & \set
	\arrow["{?\times\su}", from=1-1, to=1-2]
	\arrow["\hom", from=1-2, to=1-3]
\end{tikzcd}
\end{center}
generated by the assignment
\[
(\langle m\rangle,[n])\mapsto
\begin{cases}
    \{A\colon\{0,1\}^n\to\su[n]\mid A\;\text{degenerable}\},\quad &\text{if } m = n, \\
    \emptyset,\quad &\text{if } m\neq n.
\end{cases}
\]
\end{definition}

\begin{lemma}
Concretely, $K$ is given on objects $\langle m\rangle\in\bbox$, $[n]\in\bbDelta$ as
\begin{align*}
    K_n(m)
    &= \{ \sigma_*\circ A\circ\phi\colon \{0,1\}^m\to\su[n]\mid\,
            \phi\in\hom_{\bbox}(\langle m\rangle,\langle k\rangle),\,
            \sigma\in\hom_{\bbDelta}([k],[n]),\,
            A\;\text{degenerable}\} \\
    &\sub \hom(\{0,1\}^m, \su[n])
\end{align*}
and on morphisms by pullback and (double) pushforward.
\end{lemma}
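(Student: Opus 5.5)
The plan is to show directly that the displayed set is closed under the functoriality of the ambient functor $\hom(?\times\su)$ on $\bbox^\op\times\bbDelta$, and that it is contained in, and contains, the subfunctor generated by the degenerable cubes. Recall that $K_n(m)$ is, by definition, the smallest family of subsets of $\hom(\{0,1\}^m,\su[n])$ that contains the degenerable $n$-cubes in bidegree $(n,n)$ and is stable under the action of morphisms in $\bbox^\op\times\bbDelta$. A morphism $(\phi,\sigma)$ acts on a formal cube $B$ by $B\mapsto\sigma_\ast\circ B\circ\phi$, i.e.\ by precomposing with $\phi$ and applying the direct image map of $\sigma$. Write $S_n(m)$ for the displayed set.

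First, I will check that $S_n(m)\subseteq K_n(m)$. Let $A$ be a degenerable $k$-cube, so $A\in K_k(k)$. Acting on it with the morphism $(\phi,\sigma)\colon(\langle k\rangle,[k])\to(\langle m\rangle,[n])$ in $\bbox^\op\times\bbDelta$ gives $\sigma_\ast\circ A\circ\phi$. Since $K$ is a subfunctor, this element lies in $K_n(m)$.

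Second, I will show that $S$ is itself a subfunctor containing the generators; minimality of $K$ then gives $K\subseteq S$. The generators are covered by taking $\phi=\id$ and $\sigma=\id$. For closure, take $(\psi,\tau)\colon(\langle m\rangle,[n])\to(\langle m'\rangle,[n'])$. Then
\[
\tau_\ast\circ(\sigma_\ast\circ A\circ\phi)\circ\psi=(\tau\circ\sigma)_\ast\circ A\circ(\phi\circ\psi).
\]
The only input this needs is that the direct image is functorial, $(\tau\sigma)_\ast=\tau_\ast\sigma_\ast$ on $\su$, which holds because direct images of subsets compose. Here $\phi\circ\psi\in\hom_{\bbox}(\langle m'\rangle,\langle k\rangle)$ and $\tau\circ\sigma\in\hom_{\bbDelta}([k],[n'])$, so the composite again has the required form. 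One also has to keep the variance straight: $\bbox$ enters contravariantly, so a morphism $\langle m\rangle\to\langle m'\rangle$ in $\bbox^\op$ is a cube map $\psi\colon\{0,1\}^{m'}\to\{0,1\}^m$, which is exactly what precomposition requires.

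Finally, the action on morphisms is the restriction of the ambient functor $\hom(?\times\su)$, namely pullback along cube maps and pushforward along simplex maps. In the displayed form this appears as composing $\phi$ with the pullback and $\sigma$ with the pushforward. This is the "double pushforward" of the statement: the new pushforward is composed with the existing $\sigma_\ast$.

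I expect no genuine obstacle. The only points needing care are the variance bookkeeping and the fact that the image of a subfunctor generated by given elements equals the union of their orbits under all morphisms, which is a standard Yoneda-type observation.
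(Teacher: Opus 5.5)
Your proof is correct and follows the same approach as the paper. The paper simply asserts that the displayed family is a subfunctor of $\hom(?\times\su)$ and is clearly the smallest one containing the degenerable cubes. You make both inclusions explicit, using the closure computation $\tau_\ast\circ(\sigma_\ast\circ A\circ\phi)\circ\psi=(\tau\circ\sigma)_\ast\circ A\circ(\phi\circ\psi)$ together with minimality of the generated subfunctor.
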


\begin{proof}
Clearly, this defines a functor ${\bbox}^\op\times\bbDelta\to\set$ which is a subfunctor of the described functor,
and it certainly is the smallest.
\end{proof}

So $K_n(m)$ consists of all maps $\{0,1\}^m\to\su[n]$ that are obtainable as a composition
\begin{center}
\begin{tikzcd}[cramped]
	{\{0,1\}^m} & {\{0,1\}^k} & {\su[k]} & {\su[n]}
	\arrow["{\phi}", from=1-1, to=1-2]
	\arrow["A", from=1-2, to=1-3]
	\arrow["{\sigma_*}", from=1-3, to=1-4]
\end{tikzcd}
\end{center}
for a degenerable cube $A$, a morphism of cubes $\phi$ and a simplex morphism $\sigma$.
See also Definition~\ref{def:formal-pull-push} and the remarks thereafter.
It is clear that this functor $K$ defines a cosimplicial cubical set (by currying) which we also denote by $K$.
As a cubeset, $K_n$ is concrete with underlying set $K_n(0) = \su[n]$.

\begin{definition}\label{def:functor}
Define the functor $K^\ast\colon\cSet\to\sSet$ by
\[
    \cSet\ni X\mapsto K^\ast X\coloneqq \hom(K_{(-)}, X)\in\sSet.
\]
\end{definition}

By definition, the functor $K^\ast$ is right adjoint
and its left adjoint is given by the unique colimit preserving extension of $K$:
\begin{center}
\begin{tikzcd}
	\bbDelta & \cSet \\
	\sSet
	\arrow["K", from=1-1, to=1-2]
	\arrow[from=1-1, to=2-1]
	\arrow[""{name=0, anchor=center, inner sep=0}, "{K^\ast}", curve={height=-12pt}, from=1-2, to=2-1]
	\arrow[""{name=1, anchor=center, inner sep=0}, "{K_!}", from=2-1, to=1-2]
	\arrow["\dashv"{anchor=center, rotate=-46}, draw=none, from=1, to=0]
\end{tikzcd}
\end{center}

In the following, given a cubeset $X$ we denote by $X(m)$ its collection of $m$-cubes
and by $K^\ast_n X$ the collection of $n$-simplices of $K^\ast X$.
The next proposition makes the set of $n$-simplices of $K^\ast X$ explicit.

\begin{proposition}\label{prop:concrete-simplices}
Let $X$ be a concrete cubeset with the gluing property.
Then for a map $f\colon\su[n]\to X(0)$ the following assertions are equivalent.
\begin{enumerate}[(a)]
    \item For every $m\in\N$ and every $c\in K_n(m)$ the configuration
        \[
            f\circ c\colon\{0,1\}^m\to\su[n]\to X(0)
        \]
        is in $X(m)$.
    \item For every map $\sigma\colon[m]\to[n]$ in $\bbDelta$ the configuration
        \[
            f\circ\sigma_\ast\circ C_m\colon\{0,1\}^m\to\su[m]\to\su[n]\to X(0)
        \]
        is in $X(m)$.
    \item For every injection $i\colon[m]\to [n]$ in $\bbDelta$ the configurations
        \begin{align*}
            f\circ i_\ast\circ C_m&\colon\{0,1\}^m\to\su[m]\to\su[n]\to X(0) \\
            f\circ i_\ast\circ D_m&\colon\{0,1\}^m\to\su[m]\to\su[n]\to X(0)
        \end{align*}
        are in $X(m)$.
\end{enumerate}
In particular, $K^\ast_n X$ consists of all maps $f\colon\su[n]\to X(0)$
fulfilling one of the equivalent conditions above.
\end{proposition}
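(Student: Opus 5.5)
Since $X$ is concrete, a natural transformation $K_n\to X$ is determined by its value on $0$-cubes, i.e. by a map $f\colon K_n(0)=\su[n]\to X(0)$. Such an $f$ extends to a map of cubesets exactly when every $m$-cube $c\in K_n(m)$ is sent into $X(m)$, that is, when (a) holds. Naturality is then automatic, since both $K_n$ and $X$ are subpresheaves of the respective presheaves of point configurations. This gives the final sentence of the statement once the equivalences are in place. I would then prove (a)$\Rightarrow$(b)$\Rightarrow$(c)$\Rightarrow$(a).

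\textbf{(a)$\Rightarrow$(b).} I need $\sigma_\ast C_m\in K_n(m)$. This holds because $C_m$ is degenerable, by the concrete description of $K_n(m)$ in the preceding lemma (take $A=C_m$, $\phi=\id$).

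\textbf{(b)$\Rightarrow$(c).} The cubes $i_\ast C_m$ are special cases of (b). For $i_\ast D_m$ I would use the observation from the proof of Proposition~\ref{cor:pushforward-generates}: $D_m$ is the face $[x_m=1]$ of $\sigma_{m,\ast}C_{m+1}$. Hence $f\circ i_\ast D_m$ is a face of $f\circ (i\sigma_m)_\ast C_{m+1}$, which lies in $X(m+1)$ by (b). Faces of cubes of $X$ are cubes of $X$, since $X$ is a presheaf on $\bbox$.

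\textbf{(c)$\Rightarrow$(a).} This is the substantive direction. An element of $K_n(m)$ has the form $\sigma_\ast A\phi$ with $A$ a degenerable $k$-cube, $\sigma\colon[k]\to[n]$ and $\phi$ a cube morphism. Composition with $\phi$ is harmless because $X$ is closed under pullback along cube morphisms, so it suffices to show $f\circ\sigma_\ast A\in X(k)$. By Theorem~\ref{thm:C-D-glue-all-really}, $A$ is generated under gluing and cube morphisms from the cubes $j_\ast C_l$ and $j_\ast D_l$ with $j\colon[l]\to[k]$ injective. Pushing forward along $\sigma$ commutes with gluing (Remark~\ref{rem:formal-gluing-push}) and with cube morphisms. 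Therefore $\sigma_\ast A$ is generated in the same way from the cubes $(\sigma j)_\ast C_l$ and $(\sigma j)_\ast D_l$.

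Now factor $\sigma j=i\circ e$ with $e$ surjective and $i$ injective. By the backward direction of Proposition~\ref{cor:pushforward-generates}, together with Remark~\ref{rem:insert-1s-pushforward} and Lemma~\ref{lem:insert_1s}, $e_\ast C_l$ is a cube-morphism image of gluings of $C_{l'}$ and $D_{l'}$. Pushing forward along $i$, $(\sigma j)_\ast C_l$ is then generated from the cubes $i_\ast C_{l'}$ and $i_\ast D_{l'}$. For $(\sigma j)_\ast D_l$ I would either treat it the same way, or write $D_l$ as a face of $\sigma_{l,\ast}C_{l+1}$ and reduce to the $C$ case.

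Finally, composing with $f$ commutes with gluing and pullback of point configurations. The gluing property of $X$ states that if two cubes of $X$ agree on a face then their formal gluing is again a cube of $X$. So the generating configurations $f\circ i_\ast C_{l'}$ and $f\circ i_\ast D_{l'}$, which lie in $X$ by (c), yield $f\circ\sigma_\ast A\in X(k)$.

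The main obstacle is this last bookkeeping step. I must check that a gluing of formal cubes, after composing with $f$, is a legitimate gluing of configurations in $X$, meaning the two configurations agree on the common face. This holds because the formal cubes already agree there. I must also check that gluing in the sense of Definition~\ref{def:formal-gluing}, possibly combined with flips, matches the gluing property of \cite{Bihlmaier2026}. I would verify both points before assembling the induction on the length of the generating sequence.
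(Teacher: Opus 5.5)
Your proposal is correct and follows essentially the same route as the paper. You prove (a)$\Rightarrow$(b) by degenerability of $C_m$, (b)$\Rightarrow$(c) by realizing $D_m$ as the face $[x_m=1]$ of $\sigma_{m,\ast}C_{m+1}$, (c)$\Rightarrow$(a) via Theorem~\ref{thm:C-D-glue-all-really}, compatibility of label pushforward with gluing, and Proposition~\ref{cor:pushforward-generates}, and the final assertion via concreteness; your explicit mono--epi factorization of $\sigma j$ just unpacks the paper's brief citation of Proposition~\ref{cor:pushforward-generates}, and the compatibility checks you flag are routine because gluing is defined pointwise and flips are cube morphisms.
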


\begin{proof}
$(a)\implies (b)$:
The $m$-cube $C_m$ is degenerable, so $c = \sigma_\ast\circ C_m$ is in $K_n(m)$.

$(b)\implies (c)$:
It suffices to show that $f\circ i_\ast\circ D_m$ is in $X(m)$.
By Proposition~\ref{cor:pushforward-generates} we have that
$D_m = \sigma_{m,\ast}\circ C_{m+1}\circ\overline{\epsilon}_m$
where $\overline{\epsilon}_m$ is the inert inclusion corresponding to the face $[x_m=1]$.
Thus,
\[
      f\circ i_\ast\circ D_m
    = f\circ i_\ast\circ\sigma_{m,\ast}\circ C_{m+1}\circ\overline{\epsilon}_m
    = f\circ(i\circ\sigma_m)_\ast\circ C_{m+1}\circ\overline{\epsilon}_m
\]
is in $X(m)$ by assumption.

$(c)\implies (a)$: 
First consider the case where $c = A$ is a degenerable $n$-cube.
By Theorem~\ref{thm:C-D-glue-all-really}, $A$ can be obtained,
by gluing and taking cube morphisms, from $i_\ast C_m$ and $i_\ast D_m$ for injections $i\colon[m]\to[n]$.
Since $X$ has the gluing property, this means that $f\circ A$ is in $X(n)$.
The case where $c = \sigma_\ast\circ A$ for a morphism $\sigma\colon[m]\to[n]$
and $A$ a degenerable $m$-cube follows from Remark~\ref{rem:formal-gluing-push} and Proposition \ref{cor:pushforward-generates}.
The general case then follows from pullback along a cube morphism.

For the last assertion note that,
since $X$ and $K_n$ are concrete, a morphism of cubesets $f\in\hom(K_{n},X)$
is given by its underlying map $f_0\colon \su[n] = K_{n}(0)\to X(0)$ such that,
whenever $c\colon\{0,1\}^m\to\su[n]$ is in $K_n(m)$, the configuration $f_0\circ c$ is in $X(m)$,
see \cite[Lemma 2.3.4]{Bihlmaier2026}.
\end{proof}

\begin{remark}
In view of the above proposition, we could have also defined $K_n(m)$ to consist of all maps
$\{0,1\}^m\to\su[n]$ that admit a factorization
\begin{center}
\begin{tikzcd}[cramped]
	{\{0,1\}^m} & {\{0,1\}^k} & {\su[k]} & {\su[n]}
	\arrow["{\phi}", from=1-1, to=1-2]
	\arrow["C_k", from=1-2, to=1-3]
	\arrow["{\sigma_*}", from=1-3, to=1-4]
\end{tikzcd}
\end{center}
where $\phi$ is in $\bbox$ and $\sigma$ in $\bbDelta$.
Denote this cosimplicial cubeset by $K'$.

Recall that the inclusion of cubesets with the unique gluing property into all cubesets
admits a left adjoint $L$ by \cite[Lemma 3.1.25]{Bihlmaier2026}.
Moreover, the cubeset $LK_n$ consists of the gluing closure of $K_n$.
By Theorem~\ref{thm:C-D-glue-all-really} and Proposition~\ref{prop:degenerable-glue-C-D}
we have that $LK'_n = LK_n$, hence
\[
    \hom(K'_n,X) = \hom(LK'_n,X) = \hom(LK_n,X) = \hom(K_n,X)
\]
for every concrete cubeset $X$ with the gluing property.
\end{remark}

\begin{lemma}
We have that $X(0) = K^\ast_0X$.
In particular, the functor $K^\ast$ is faithful on concrete cubesets.
\end{lemma}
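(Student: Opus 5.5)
We need to show $K^\ast_0X=\hom(K_0,X)\cong X(0)$, and then deduce faithfulness on concrete cubesets.

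The plan is to compute the cubeset $K_0$ explicitly and show it is the representable point $\bbox^0$. For $n=0$ the label set $\su[0]$ consists of the single element $\{0\}$. Hence for every $m$, the set $K_0(m)\sub\hom(\{0,1\}^m,\su[0])$ has at most one element, namely the constant map. It is nonempty because $D_0$ (the $0$-cube with $f_0=1$) is degenerable, and pulling it back along the unique cube morphism $\{0,1\}^m\to\{0,1\}^0$ lies in $K_0(m)$. So $K_0(m)$ is a singleton for all $m$, and $K_0$ is the terminal cubeset. Since $\bbox^0$ is terminal in $\bbox$, the terminal presheaf is the representable $\hom_{\bbox}(-,\langle 0\rangle)$. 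The Yoneda lemma then gives $K^\ast_0X=\hom(K_0,X)=\hom(\bbox^0,X)=X(0)$, naturally in $X$. This step does not need concreteness or the gluing property; it only uses that a constant configuration into a point is the pullback of a point.

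For faithfulness, let $f,g\colon X\to Y$ be maps of concrete cubesets with $K^\ast f=K^\ast g$. Restricting to $0$-simplices and using the natural identification above gives $f_0=g_0\colon X(0)\to Y(0)$. By concreteness, $Y(m)\hookrightarrow Y(0)^{2^m}$ is injective, and the components $f_m,g_m$ are determined by applying $f_0$, respectively $g_0$, vertexwise (naturality along the vertex inclusions $\bbox^0\to\bbox^m$). Hence $f_m=g_m$ for all $m$, so $f=g$.

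The only point needing care is that the identification $K^\ast_0X\cong X(0)$ is natural in $X$; this holds because it is induced by the Yoneda isomorphism. I expect no real obstacle here.
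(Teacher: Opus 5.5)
Your proof is correct and follows the paper's argument. The paper also observes that $\su[0]$ is a singleton, so $K_0$ is terminal and $K^\ast_0X=\hom(\ast,X)=X(0)$; for faithfulness it simply cites \cite[Lemma 2.3.4]{Bihlmaier2026}, which is the vertexwise-determination step you spell out, and you additionally make explicit the nonemptiness of $K_0(m)$ via the degenerable cube $D_0$, which the paper leaves implicit.
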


\begin{proof}
Since $\su[0]$ is a singleton, we have that $K_0$ is the terminal cubeset.
Thus, $K^\ast_0X = \hom(K_0,X) = \hom(\ast,X) = X(0)$.
Faithfulness of $K^\ast$ on concrete cubesets thus follows from \cite[Lemma 2.3.4]{Bihlmaier2026}.
\end{proof}

Now we are ready to prove our main theorem.

\begin{theorem}\label{thm:main}
Let $X$ be a concrete fibrant cubeset. Then the following assertions hold.
\begin{enumerate}[(i)]
	\item The simplicial set $K^\ast X$ is fibrant (a Kan complex).
	\item The set of ergodic components of $X$ is canonically isomorphic to the set of connected components of $K^\ast X$,
	and the isomorphism is natural in $X$.
    \item The $n$-th structure group of $X$ is canonically isomorphic to the $n$-th homotopy group of $K^\ast X$ for all $n\in\N$ and all vertices $x_0\in X(0)$,
    and the isomorphism is natural in $X$.
\end{enumerate}
\end{theorem}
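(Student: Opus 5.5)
Throughout, I use Proposition~\ref{prop:concrete-simplices}, which identifies $K^\ast_nX$ with maps $f\colon\su[n]\to X(0)$ such that all $f\circ i_\ast C_m$ and $f\circ i_\ast D_m$ lie in $X$ for injections $i$. A concrete fibrant cubeset has the gluing property: glue two cubes by completing the corner of a higher cube and taking a diagonal face. So that proposition applies.

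\textbf{Step (i), horn filling.} Take a horn $\Lambda^{n+1}_j\to K^\ast X$, i.e.\ maps $f_i\colon\su[n]\to X(0)$ for $i\ne j$, compatible on overlaps. They determine a map $f$ on all subsets of $[n+1]$ except the full set and the face $\hat j=[n+1]\setminus\{j\}$. First I build the missing face $d_j$. I will use $C_{n+1}$ after a relabeling of labels adapted to $j$; I would first treat $j=n+1$ and reduce the general case by symmetry or an analogous gluing pattern. By Proposition~\ref{prop:unique-Cn}, $C_{n+1}$ contains every $(n+1)$-tuple exactly once and no full label, so $f\circ C_{n+1}$ is defined on all vertices except the one labelled $\hat j$. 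I aim to place that vertex at $(1,\dots,1)$ by a flip, so the known data form a corner. Each face of this corner is glued from the $C_n^i$ and $D_n^i$ by Theorem~\ref{thm:faces_of_Cn_generated}, and $f\circ C_n^i$, $f\circ D_n^i$ lie in $X$ because $f_i\in K^\ast_nX$. The gluing property then puts every corner face in $X$, and corner completion gives a value for $f(\hat j)$. Next I check that the new $f$ restricted to $\su\hat j$ is an $n$-simplex: its $C_m,D_m$ configurations are either faces of the completed cube or lie in existing faces. Finally, to fill the full label, I apply the same argument to $D_{n+1}$, whose only undetermined vertex is the full label. Its lower faces come from Corollary~\ref{rem:D-faces}, and its upper faces should be pullbacks of the now-complete boundary data. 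Corner completion then gives $f([n+1])$. The main obstacle is to show that the remaining $(n+1)$-dimensional conditions $i_\ast C$ and $i_\ast D$ for $i=\mathrm{id}$ hold, and that the horn index $j$ can be arranged as the missing corner vertex for every $j$. I expect this to need the flip symmetry and the reverse gluings of Remark~\ref{rem:glue-C-inverse}.

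\textbf{Step (ii), components.} $K_1^\ast X$ consists of triples $f(\{0\}),f(\{1\}),f(\{0,1\})$ with $C_1$ and $D_1$ edges. The $D_1$ condition is an edge $(f(01),f(1))$, and the $C_1$ condition is an edge $(f(0),f(1))$. So simplicial connectivity is generated by $1$-cubes of $X$, which is exactly the ergodic relation. Conversely, given a $1$-cube $(a,b)$, the map with $f(01)=b$ works, since the degenerate edge $(b,b)$ is a $1$-cube. Naturality is clear from $K_0^\ast X=X(0)$.

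\textbf{Step (iii), homotopy groups.} An $n$-sphere based at $x_0$ in $K^\ast X$ is an $f$ that is constant $x_0$ except at $[n]$. The $C_n$ condition says the $n$-cube $f\circ C_n$ is constant $x_0$ except at vertex $(1,\dots,1)$, whose label is $\{0,\dots,n-1\}$. So this value ranges exactly over such corner-trivial cubes; the $D_n$ condition is then automatic, because its corner is trivial and its value is $x_0$. This gives a bijection between sphere representatives and $\pi_n(X)$ representatives. The simplicial homotopy relation (an $(n+1)$-simplex with faces $s,t,x_0,\dots$) translates under $C_{n+1}$ and $D_{n+1}$ into an $(n+1)$-cube with lower face $s$, a diagonal $t$, and all else $x_0$. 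The diagonal $S^{n+1}_{n+1}$ computed before Lemma~\ref{lem:gluing-S} is what identifies which face carries $t$. This matches the cubical relation of the earlier Proposition, so the bijection descends to the quotients. For compatibility with the group law, the homotopy-group product $d_{n}$ of a filler of $(x_0,\dots,s,\cdot,t)$ corresponds, through the same $C_{n+1}$ corner, to the corner completion $a\ast b$. Abelianness is automatic for $n\ge2$, and for $n=1$ it follows from the iso. Naturality holds since everything is built from $f\mapsto g\circ f$.
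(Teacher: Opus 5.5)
Your architecture is the paper's: complete a corner of $f\circ C_{n+1}$ to get the missing face, complete the corner of $f\circ D_{n+1}$ to get the full label, and use $D_n$ for spheres and $C_{n+1}$ for the homotopy relation and product. Part (ii) is fine. However, the central step of (i) has a genuine gap.

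Theorem~\ref{thm:faces_of_Cn_generated} generates the faces of $C_{n+1}$ from $C_n^i$ and $D_n^i$ for \emph{all} $i\le n+1$, including the missing index $i=j$. For that index your justification ``they lie in $X$ because $f_i\in K^\ast_nX$'' fails. There is no $f_j$, so $f\circ C_n^j$ is not known to be a cube, and $f\circ D_n^j$ is not even defined, since $D_n^j$ carries the unknown label $\hat j$ at $(1,\dots,1)$.

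These generators cannot be avoided just because the corner faces miss the vertex $\hat j$. For $0<j\le n$ the corner contains $[x_0=0]$, and the proof builds the lower faces from $C_{n+1}^{x_n=0}=\prod_{i\le n}D_n^i$, which involves $D_n^j$. For $j=n+1$ you are actually fine: the corner is the union of the lower faces, and these are built from $C_n^i,D_n^i$ with $i\le n$ only. But the reduction to that case fails, because the only non-trivial automorphism of the simplex category exchanges $j$ and $n+1-j$, so it cannot reach inner horns. The reduction is also unnecessary: the vertex $\hat j$ of $C_{n+1}$ is $(1^j,0^{n+1-j})$, which flips move to $(1,\dots,1)$ for every $j$.

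The missing idea is the paper's two-stage treatment of $\hat j$:
\begin{enumerate}
\item Obtain $f\circ C_n^j\in X$ by gluing the other $f\circ C_n^i$ (Remark~\ref{rem:glue-C-inverse}).
\item Choose a \emph{preliminary} value $f'(\hat j)$ by completing the corner of $f\circ D_n^j$. Its lower faces are glued from the $\delta_{j,\ast}D_{n-1}^\ell$ (Corollary~\ref{rem:D-faces}), and each of these lies in some face $\hat k$ with $k\neq j$, hence in $X$.
\item Apply Theorem~\ref{thm:faces_of_Cn_generated} to $f'\circ C_{n+1}$. The corner faces avoiding $\hat j$ do not depend on $f'(\hat j)$, so completing that corner gives the final, possibly different, value $f(\hat j)$.
\item Re-establish $f\circ D_n^j$ for this new value, either from the face $[x_n=1]=D_n^{n+1}$ or from $[x_n=0]=\prod_{i\le n}D_n^i$ by backward gluing (Remark~\ref{rem:C-faces-simple}). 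It is not simply ``a face of the completed cube'', and $C_n^j$ is no face of $C_{n+1}$ at all.
\end{enumerate}
Only after this are all lower faces of $D_{n+1}$ available. The $(n+1)$-dimensional conditions you list as the main obstacle then hold by construction.

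Separately, in (iii) you have swapped $C_n$ and $D_n$. $C_n$ has no full label (Proposition~\ref{prop:unique-Cn}), and its top vertex carries $\{0,\dots,n-1\}$, where a based sphere takes the value $x_0$; so $f\circ C_n$ is constant and imposes nothing. It is $D_n$, with $[n]$ at $(1,\dots,1)$, whose condition is the only non-trivial one, and this is what identifies spheres with based $n$-cubes.

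For the homotopy relation, three points need saying:
\begin{itemize}
\item The $D_{n+1}$ condition is automatic by corner completion (Corollary~\ref{rem:D-faces}).
\item $f\circ D_n^n$ and $f\circ D_n^{n+1}$ are just $s$ and $t$.
\item What remains is $f\circ C_{n+1}$, in which $\hat n$ and $\widehat{n+1}$ sit at the adjacent vertices $(1,\dots,1,0)$ and $(1,\dots,1)$, with value $x_0$ everywhere else.
\end{itemize}
There is no diagonal $S^{n+1}_{n+1}$ to invoke; the diagonals are $S^i_{n+1}$ for $0<i\le n$, so that reference should be replaced by this adjacency observation.
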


For the definitions of fibrant cubesets, structure groups and ergodic components we refer to \cite{Bihlmaier2026}.
In the proof, we use the notation $\delta_i$, etc., for a subobject of $[n]$,
but note that they are in one-to-one correspondence with the non-empty subsets of $\{0,\ldots,n\}$,
so we also interchangeably use this notation (so that $[n]$ corresponds to $\id$, $\hat i$ to $\delta_i$, etc.).

\begin{proof}
ad (i).
The idea is to show:
\begin{enumerate}
    \item[(0)] For a horn $\lambda$ in $K^* X$ of the $n$-simplex with missing face $\delta_i$, we obtain a map $f\colon\su[n]\setminus\{\delta_i,\id\}\to X(0)$ such that most of the conditions of Proposition~\ref{prop:concrete-simplices} are fulfilled.
    \item[(1)] The formal cube $C_{n-1}^i$ corresponding to the missing face $\delta_i$ can be glued from the other formal cubes $C_{n-1}^j$ for $j\neq i$,
                corresponding to the other faces $\delta_j$ of the $n$-simplex.
	\item[(2a)] Using corner completion, we may also choose a point for $\hat i = \delta_i$ to construct a preliminary $D_{n-1}^i$, and thus all faces of $C_n$ exist. In particular, the ones without the label $\hat i$, which constitute a corner.
	        Using corner completion, construct $C_n$ and thus $f$ has a value on $\hat i$.
    \item[(2b)] Taking the corresponding face, construct (a now possibly different, but final) $D_{n-1}^i$.
    \item[(3)] All faces of $D_n$ that don't contain the full label $[n] = \id$, exist and thus we may complete to a $D_n$.
\end{enumerate}

Let $\lambda\colon\Lambda_n^i\to Y$ be an $n$-horn, $n\ge 2$, in $Y = K^\ast X$, $0\leq i\leq n$,
where $X$ is a fibrant concrete cubeset (and so in particular has the gluing property by~\cite[Proposition 3.1.23]{Bihlmaier2026}).
This means that we have a compatible family of ($n-1$)-simplices in $K^* X$, i.e., maps of cubesets $f_j\colon K_{n-1}\to X$, $j\neq i$, that agree on neighbouring ($n-2$)-faces.
We want to construct an $n$-simplex $f$ (a map $f\colon K_n\to X$ of cubesets) that has the $f_j$ as its $j$-th faces (i.e., $f\circ\delta_{j,\ast} = f_j$ on points),
so that it acts as a horn filling of $\lambda$.
So let us construct the map $f\colon\su[n]\to X(0)$ which, in order to induce a map of cubesets $f\colon K_n\to X$,
has to fulfill the condition from Proposition~\ref{prop:concrete-simplices}.

The compatibility with the $f_j$ already tells us the value of $f$ on every subobject of $[n]$, except for $\delta_i$ and $\id$,
and thus, given an injection $\iota\colon[k]\to[n]$ which is not the identity nor $\delta_i^{n-1}$, the maps
\begin{align*}
    f\circ \iota_\ast\circ C_k&\colon\{0,1\}^k\to\su[k]\to\su[n]\to X(0) \\
    f\circ \iota_\ast\circ D_k&\colon\{0,1\}^k\to\su[k]\to\su[n]\to X(0)
\end{align*}
are in $X(k)$ (here we can write $f$ for the map that doesn't yet exist, since $f$ is well-defined on the image of $\iota_\ast$).

The configuration $f\circ\delta_{i,\ast}\circ C_{n-1} = f\circ C_{n-1}^i$ is obtainable under gluing by the cubes $f\circ C_{n-1}^j$ for $j\neq i$,
see Remark~\ref{rem:glue-C-inverse}, and thus itself is an ($n-1$)-cube in $X$ by the gluing property.

Next we show that we can extend $f$ to $\su[n]\setminus\{\id\}$
by assigning a value on $\delta_i$ such that $f\circ C_n$ is an $n$-cube and $f\circ D_{n-1}^i$ an ($n-1$)-cube in $X$.
By Corollary~\ref{rem:D-faces}, every lower face $[x_j=0]$ of $D_{n-1}^i = \delta_{i,\ast} D_{n-1}$
can be glued from (the pushforward of) $D_{n-2}^\ell$, and so $f\circ D_{n-1}^i\rvert_{{\bbcor}^{n-1}}$ is an ($n-1$)-corner in $X$
(the expression with $f$ makes sense since the corner doesn't contain the full label $\delta_i$).
We can complete this to an ($n-1$)-cube and call this extended map $f'\colon\su[n]\setminus\{\id\}\to X(0)$,
as it will only be preliminary.
Now we can use the $f'\circ D_{n-1}^j$ and $f'\circ C_{n-1}^j$ to glue every face of $f\circ C_n$ by Theorem~\ref{thm:faces_of_Cn_generated}.
In particular, the $n$-corner of $C_n$ that does not hit the label $\delta_i$ exists in $X$
and can be completed to an $n$-cube with value $f\colon\su[n]\setminus\{\mathrm{id}\}\to X(0)$ such that $f\circ C_n$ is in $X(n)$.
Now we can go backwards and use the face $[x_n=1]$ or $[x_n=0]$ (depending on $i$) as well as backwards gluing with $f\circ D_{n-1}^j$, $j\neq i$, in the second case to obtain $f\circ D_{n-1}^i$ as an element of $X(n-1)$, see Remark~\ref{rem:C-faces-simple}.

Lastly, we again use Corollary~\ref{rem:D-faces} to see that every lower face of $D_n$ yields an ($n-1$)-cube under $f$ in $X$.
Thus we can complete the $n$-corner of $f\circ D_n$ to obtain a value of $f$ on the full label $[n]$.

Therefore we have now constructed a map $f\colon\su[n]\to X(0)$ such that
\begin{enumerate}
    \item[(1)] $f\circ\delta_{i,\ast}\circ C_{n-1} = f\circ C_{n-1}^i$ is in $X(n-1)$,
    \item[(2a)] $f\circ C_{n}$ is in $X(n)$,
    \item[(2b)] $f\circ\delta_{i,\ast}\circ D_{n-1} = f\circ D_{n-1}^i$ is in $X(n-1)$ and
    \item[(3)] $f\circ D_n$ is in $X(n)$.
\end{enumerate}
Together with the information we had earlier (0), we see that $f$ fulfills all properties needed to be an $n$-simplex (Proposition~\ref{prop:concrete-simplices}) and that it completes the $n$-horn.
Therefore, $K^\ast X$ is fibrant.

ad (ii).
The set $\pi_0(X)$ of ergodic components of $X$ consists of all points in $X(0)$ where one identifies two points $x$ and $y$ iff $(x,y)\in X(1)$ (which is an equivalence relation since $X$ is fibrant).
The set of connected components $\pi_0(K^\ast X)$ is all points in $K^\ast_0 X = X(0)$ with two points $x$ and $y$ identified iff there exists a path $p\in K^\ast_1 X$ connecting both (which also is an equivalence relation by fibrancy).
Since $K^\ast_1 X \cong X(1)\times_{X(0)}X(1)$, this means that $x\sim y$ iff there exists $z\in X(0)$ such that $(x,y),(z,y)\in X(1)$ which, by choosing $z=x$, is equivalent to $(x,y)\in X(1)$.
So the sets of connected components are isomorphic.
The situation is summarized by the following commutative diagram
\begin{center}
\begin{tikzcd}
	{X(1)} & {X(0)} & {\pi_0(X)} \\
	{X(1)\times_{X(0)}X(1)} & {X(0)} & {\pi_0(K^\ast X)}
	\arrow[shift left, from=1-1, to=1-2]
	\arrow[shift right, from=1-1, to=1-2]
	\arrow["\Delta"', from=1-1, to=2-1]
	\arrow[from=1-2, to=1-3]
	\arrow[from=1-2, to=2-2]
	\arrow[dashed, from=1-3, to=2-3]
	\arrow[shift left, from=2-1, to=2-2]
	\arrow[shift right, from=2-1, to=2-2]
	\arrow[from=2-2, to=2-3]
\end{tikzcd}
\end{center}
where both rows are coequalizer diagrams and the right vertical arrow,
induced by the universal property of the coequalizer,
is exactly the isomorphism $\pi_0(X)\simeq\pi_0(K^\ast X)$.
This also shows naturality in $X$.

ad (iii).
Let $n\in\N$, $x_0\in X(0)$ and $Y = K^\ast X$. Recall the definition of the structure group $A_n(X,x_0)$, see \cite[Construction 3.3.8]{Bihlmaier2026}.
Let $\bbcor_n(X,x_0)$ be the set of all based $n$-cubes
\begin{center}
\begin{tikzcd}
	{\bbcor_n(X,x_0)} & \ast \\
	{\hom({\bbox}^n,X)} & {\hom({\bbcor}^n,X)}
	\arrow[from=1-1, to=1-2]
	\arrow[from=1-1, to=2-1]
	\arrow["{x_0}", from=1-2, to=2-2]
	\arrow[from=2-1, to=2-2]
\end{tikzcd}.
\end{center}
On the other hand, let $\tilde{\pi}_n(K^\ast X,x_0)$ be the set of all $n$-simplices with constant boundary $x_0$, i.e., the pullback
\begin{center}
\begin{tikzcd}[cramped]
	{\tilde{\pi}_n(Y,x_0)} & \ast \\
	{\hom({\bbDelta}^n,Y)} & {\hom(\partial{\bbDelta}^n,Y)} \\
	{\hom(K_n,X)} & {\hom(\varinjlim\limits_{i<n}K_i,X)}
	\arrow[from=1-1, to=1-2]
	\arrow[from=1-1, to=2-1]
	\arrow["{{x_0}}", from=1-2, to=2-2]
	\arrow[from=2-1, to=2-2]
	\arrow["\simeq"', from=2-1, to=3-1]
	\arrow["\simeq", from=2-2, to=3-2]
	\arrow[from=3-1, to=3-2]
\end{tikzcd}.
\end{center}
Now, given such an $n$-sphere $s\colon\su[n]\to X(0)$, one sees that being constant on the boundary
implies that $s$ has value $x_0$ on every label that is not the full label $[n]$.
The condition from Proposition~\ref{prop:concrete-simplices} on the $C_m$, $m\leq n$, and on $D_k$, $k<n$, becomes trivial.
The only non-trivial condition is the one for $D_n$ which has the full label $\id = [n]$ in exactly one vertex.
This means that $\tilde{\pi}_n(Y,x_0)$ consists of all $n$-cubes whose $n$-corner is constant $x_0$.
Therefore, $\bbcor_n(X,x_0)\simeq\tilde{\pi}_n(Y,x_0)$.

The $n$-structure group $A_n(X,x_0)$ of $X$ is the quotient of $\bbcor_n(X,x_0)$ that identifies two $n$-cubes
$c$ and $d$ iff the cube that has $c$ on the face $[x_0=0]$ and $d$ on the face $[x_0=1]$ is an ($n+1$)-cube in $X$,
see \cite[Lemma 3.3.9]{Bihlmaier2026}.%
\footnote{We denote the structure group by the letter $A$ here, and not $\pi$ as in \cite{Bihlmaier2026},
to distinguish it visually from the homotopy group of the simplicial set,
which also is denoted $\pi$.}
On the other hand, the $n$-th homotopy group $\pi_n(Y,x_0)$ is the set $\tilde{\pi}_n(Y,x_0)$ modulo the equivalence relation
where one identifies two $n$-spheres $s$ and $t$ if there exists an ($n+1$)-simplex $f$ in $Y$ with boundary $(\id,\ldots,\id,s,t)$
(meaning that $f\circ\delta_i = x_0$ for all $i\geq 2$, $f\circ\delta_n = s$ and $f\circ\delta_1 = s$),
see \cite[Lemma 7.4]{Goerss1999}.

Again using the description of $Y$, $f$ corresponds uniquely to a map $f\colon\su[n+1]\to X(0)$ of cubesets whose labels of length $\leq n$ are $x_0$
and likewise $f(\delta_i) = x_0$ for $i<n$ (and $f(\delta_n) = s(\id)$, $f(\delta_{n+1}) = t(\id)$).
Thus, we can reduce the existence of such an $f$ to the question whether $f\circ D_n^n$, $f\circ D_n^{n+1}$, $f\circ C_{n+1}$ and $f\circ D_{n+1}$ are in $X(n)$, resp. $X(n+1)$.
But $c=f\circ D_n^n$ and $d=f\circ D_n^{n+1}$ are $s$ and $t$ under the isomorphism $\bbcor_n(X,x_0)\simeq\tilde{\pi}_n(Y,x_0)$ and thus exist.
Since $X$ is fibrant and we have enough faces of $D_{n+1}$ to use corner completion by Corollary~\ref{rem:D-faces}, the existence of $f$ only depends on $C_{n+1}$.
But note that the labels $\delta_n$ and $\delta_{n+1}$ appear precisely once in $C_{n+1}$ next to each other, and every other label has length at most $n$.
So $f\circ C_{n+1}$ is (up to cube automorphisms) precisely the configuration obtained from $c$ and $d$ put next to each other.
Thus, $s\sim t$ iff $c\sim d$.
This induces a (natural) bijection $A_n(X,x_0)\simeq\pi_n(Y,x_0)$.

Lastly, we show that this bijection also is a group homomorphism.
Let $c,d\in\bbcor_n(X,x_0)$ with corresponding $n$-spheres $s$ and $t$.
Recall that their product in $\pi_n(Y,x_0)$ is given by the
(equivalence class of the) $n$-th face $\delta_n$
of a horn filling of the horn $\lambda\colon\Lambda^{n+1}_n\to Y$ with boundary
$\lambda(\delta_i) = x_0$ for $i\leq n-2$, $\lambda(\delta_{n-1}) = s$ and $\lambda(\delta_{n+1}) = t$,
see \cite[Lemma 7.1]{Goerss1999}.
This horn filling amounts to the datum of a map $f\colon\su[n+1]\to X(0)$ with
$f(s) = x_0$ for each subobject $s$ of size $\leq n$,
$f(\delta_i) = x_0$ for all $i\leq n-2$, $f(\delta_{n-1}) = s$ and $f(\delta_{n+1}) = t$
such that $f$ fulfills the usual compatibilities on the $C_k$- and $D_k$-cubes.
The resulting product of $s$ and $t$ is thus (the equivalence class of) $f(\delta_n)$.
Now the ($n+1$)-cube $f\circ C_{n+1}$ has the value $c(1)$ at $(1,\ldots,1,0,0)$,
the value $d(1)$ at $(1,\ldots,1)$, the value $f(\delta_n)$ at $(1,\ldots,1,0)$
and $x_0$ everywhere else.
After appropriately flipping and permuting coordinates,
the cube $f\circ C_{n+1}$ is the datum of the multiplication $\overline{c*d}$,
see \cite[Construction 3.3.11]{Bihlmaier2026}.
Thus, the bijection $A_n(X,x_0)\simeq\pi_n(Y,x_0)$ is a group homomorphism.
\end{proof}

\begin{remark}
The proof can be strengthened to obtain the following:
If a map $f\colon X\to Y$ between concrete cubesets with gluing property
is a fibration, then the induced map $K^\ast f\colon K^\ast X\to K^\ast Y$ of simplicial sets
is a Kan fibration.
\end{remark}

Moreover, the functor $K^\ast$ maps $n$-step cubesets to $n$-truncated simplicial sets.

\begin{definition}
A Kan complex $X$ is \emph{$n$-truncated} if the map
\[
    \hom(\bbDelta^k,X)\to\hom(\partial\bbDelta^k,X)
\]
is surjective for all $k\geq n+2$.
\end{definition}

\begin{corollary}
Let $X$ be an $n$-step fibrant cubeset. Then $K^\ast X$ is $n$-truncated.
\end{corollary}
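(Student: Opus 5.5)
Given a map $\partial\bbDelta^k\to K^\ast X$ with $k\ge n+2$, we will produce a filling $k$-simplex. By the description of simplices in Proposition~\ref{prop:concrete-simplices}, the boundary datum is a map $f\colon\su[k]\setminus\{\id\}\to X(0)$. For every injection $\iota\colon[m]\to[k]$ other than the identity, the configurations $f\circ\iota_\ast\circ C_m$ and $f\circ\iota_\ast\circ D_m$ already lie in $X(m)$. So two things remain. First, choose a value $f(\id)$ such that $f\circ D_k\in X(k)$. Second, check that $f\circ C_k\in X(k)$, which involves no new value since $C_k$ never takes the full label.

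We take the conditions in order of difficulty.

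\textbf{The $C_k$ condition.} Its corner restriction $f\circ C_k\rvert_{\bbcor^k}$ lies in $X$. To see this, note that every lower face of $C_k$ is glued from the cubes $C_{k-1}^i$ and $D_{k-1}^i$ by Theorem~\ref{thm:faces_of_Cn_generated} together with Corollary~\ref{cor:diagonals-suffice}. The upper faces not containing $(1,\dots,1)$ are lower faces of those, hence also covered. All of these cubes are pushforwards along faces $\delta_i$, so their images under $f$ are in $X$, and they remain in $X$ after gluing because fibrant concrete cubesets have the gluing property. Since $X$ is $n$-step and $k\ge n+1$, this corner has a \emph{unique} completion. The same argument applies to the upper face $[x_{k-1}=1]$, which equals $D_{k-1}^{k}$ by Remark~\ref{rem:C-faces-simple}, and to the remaining upper faces, which by Corollary~\ref{cor:diagonals-suffice} are glued from diagonals and $D_{k-1}^k$. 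Hence every face of $C_k$, including those through $(1,\dots,1)$, is the image under $f$ of a cube in $X$. The unique completion of the corner therefore has the same value at $(1,\dots,1)$ as $f\circ C_k$, which gives $f\circ C_k\in X(k)$. Here uniqueness is the crucial input: a priori the completed value at $(1,\ldots,1)$ could differ from $f(\hat{k-1})$, the label that $C_k$ takes there.

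\textbf{The $D_k$ condition.} Corollary~\ref{rem:D-faces} shows that all lower faces of $D_k$ are glued from the cubes $D_{k-1}^i$, so $f\circ D_k$ restricted to the corner lies in $X$. This corner avoids the full label except at $(1,\dots,1)$, which is the unique point where $\id$ occurs. Corner completion then produces a cube, and we define $f(\id)$ to be its value at $(1,\dots,1)$.

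\textbf{Assembling the simplex.} With both conditions in place, Proposition~\ref{prop:concrete-simplices}(c) is satisfied, so $f$ is a $k$-simplex whose boundary is the given one. Hence $\hom(\bbDelta^k,K^\ast X)\to\hom(\partial\bbDelta^k,K^\ast X)$ is surjective, and $K^\ast X$ is Kan by Theorem~\ref{thm:main}.

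The main obstacle is the $C_k$ step. It requires checking carefully that the upper faces of $C_k$ through $(1,\dots,1)$ are genuinely glued only from boundary data, in particular that $D_{k-1}^k$ and the diagonals $S_k^i$ involve only labels $\neq\id$. This is needed to invoke the uniqueness from the $n$-step hypothesis. The requirement $k\ge n+2$ (rather than $n+1$) presumably enters at the faces, where the $(k-1)$-cubes must also be uniquely determined; I would verify whether $k-1\ge n+1$ is needed there.
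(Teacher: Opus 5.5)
Your direct boundary-filling argument takes a different route from the paper's, and it can be made to work. However, the key inference in the $C_k$ step rests on the wrong uniqueness.

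Suppose every face of $f\circ C_k$ is a cube and the $k$-dimensional corner completion $c$ is unique. This still does not tell you that $c$ takes the value $f(\hat k)$ at $(1,\ldots,1)$. (The label of $C_k$ there is $\hat k=\{0,\ldots,k-1\}$, not $\hat{k-1}$.) Indeed, for $k=n+1$ both facts hold for every boundary $\partial\Delta^{n+1}\to K^\ast X$. Yet such boundaries need not fill, since $\pi_n(K^\ast X,x_0)\cong A_n(X,x_0)$ can be nonzero. So $k$-dimensional uniqueness cannot be what does the work.

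Your closing suspicion is the right fix: restrict to the upper face $[x_{k-1}=1]$. There $f\circ C_k$ restricts to $f\circ D_{k-1}^{k}$ (Remark~\ref{rem:C-faces-simple}), which lies in $X(k-1)$ directly as boundary data. Meanwhile $c$ restricts to a $(k-1)$-cube agreeing with it at every point except $(1,\ldots,1)$. Since $k-1\ge n+1$, uniqueness of $(k-1)$-dimensional completion forces the two restrictions to coincide. Hence $c=f\circ C_k$ pointwise, and $f\circ C_k\in X(k)$ by concreteness.

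So you only use three things: existence of $k$-dimensional completion (fibrancy), the lower faces of $C_k$ from Theorem~\ref{thm:faces_of_Cn_generated} (to have the corner), and this single upper face. The remaining upper faces play no role. The $D_k$ step and the assembly via Proposition~\ref{prop:concrete-simplices} are fine as written.

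The paper instead argues through homotopy groups. It uses a cited result from the nilspace paper, that the structure groups of an $n$-step fibrant cubeset vanish above degree $n$. Theorem~\ref{thm:main}(iii) transfers this to $\pi_k(K^\ast X,x_0)=0$ for $k>n$ and every basepoint. The standard characterization of truncated Kan complexes by vanishing homotopy groups then finishes. That is a three-line deduction, but it rests on the full structure-group/homotopy-group comparison and on external results.

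Your argument, once repaired, uses only part (i) of the main theorem plus the face-generation combinatorics. Part (i) is needed because truncatedness is defined for Kan complexes. You check the defining lifting property for $\partial\Delta^k$ directly, and you show exactly where the bound $k\ge n+2$ enters: as uniqueness of corner completion in dimension $k-1$ on one face. This is the boundary analogue of the paper's subsequent remark on unique horn fillers.
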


\begin{proof}
This follows from \cite[Proposition 3.3.13]{Bihlmaier2026},
the theorem and \cite[Proposition 054V]{Kerodon2026}.
\end{proof}

\begin{remark}
The proof of the theorem also shows that if $X$ is $n$-step,
then the lift of the horn $\lambda$ in $K^* X$ to a $k$-simplex is unique for $k\geq n+1$.
In particular, $K^\ast X$ is an $n$-groupoid, see \cite[Definition 053M]{Kerodon2026}.
This yields another proof that $K^\ast X$ is $n$-truncated:
every $n$-groupoid is already $n$-truncated.
\end{remark}

Also, ergodicity is translated to connectivity.

\begin{definition}
A Kan complex $X$ is \emph{$n$-connective} (or \emph{$(n-1)$-connected})
if for every basepoint $x_0\in X$ and every $0\leq k<n$
the homotopy group (set of connected components for $k=0$) $\pi_k(X,x_0)$ is a singleton.
\end{definition}

\begin{corollary}
If $X$ is an $n$-ergodic fibrant concrete cubeset, then $K^\ast X$ is $n$-connective.
\end{corollary}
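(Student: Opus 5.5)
This corollary follows directly from Theorem~\ref{thm:main}, items (ii) and (iii), once we recall what $n$-ergodicity means in \cite{Bihlmaier2026}. There, a fibrant concrete cubeset $X$ is called $n$-ergodic if every configuration $\{0,1\}^k\to X(0)$ with $k\le n$ lies in $X(k)$, i.e.\ $X(k)=X(0)^{\{0,1\}^k}$ for all $k\le n$. Two things then need checking.

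First, for $k=0$: $1$-ergodicity already gives $X(1)=X(0)\times X(0)$, so any two points are related. Hence $\pi_0(X)$ is a single point, provided $X(0)\neq\emptyset$. By Theorem~\ref{thm:main}(ii), $\pi_0(K^\ast X)\cong\pi_0(X)$ is then a singleton. If $X$ is empty the condition over basepoints is vacuous, and I would treat that case separately.

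Second, for $1\le k<n$ and a basepoint $x_0\in X(0)=K^\ast_0X$: Theorem~\ref{thm:main}(iii) identifies $\pi_k(K^\ast X,x_0)$ with the structure group $A_k(X,x_0)$. So it suffices to show $A_k(X,x_0)$ is trivial. We have $X(k+1)=X(0)^{\{0,1\}^{k+1}}$ since $k+1\le n$. Take two based $k$-cubes $c,d\in\bbcor_k(X,x_0)$. The $(k+1)$-configuration with $c$ on $[x_0=0]$ and $d$ on $[x_0=1]$ is automatically in $X(k+1)$, so $c\sim d$ by the description of $A_k$ via \cite[Lemma 3.3.9]{Bihlmaier2026}. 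Since $\bbcor_k(X,x_0)$ contains the constant cube at $x_0$, it is nonempty, and therefore $A_k(X,x_0)$ is a singleton.

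The only real subtlety is matching the paper's indexing of ``$n$-ergodic'' with ``$n$-connective''. The latter asks for triviality only up to $k<n$, and the argument above uses full cubes exactly in dimensions $\le k+1\le n$, so the indices line up. If \cite{Bihlmaier2026} instead defines $n$-ergodicity as triviality of the structure groups $A_k$ for $k<n$ together with $\pi_0$ being a point, the corollary is immediate from Theorem~\ref{thm:main}(ii),(iii) with no further work.
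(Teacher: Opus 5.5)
Your proposal is correct and takes the same approach the paper intends: the paper states this corollary without proof, as a direct consequence of parts (ii) and (iii) of the main theorem, with $n$-ergodicity forcing $\pi_0(X)$ and the structure groups $A_k(X,x_0)$, $1\le k<n$, to be trivial, exactly as you argue. If the reference only requires $X(n)=X(0)^{\{0,1\}^n}$, fullness in every dimension $k\le n$ still follows: precompose a $k$-configuration with a coordinate projection $\{0,1\}^n\to\{0,1\}^k$ and restrict back along a face inclusion. So your indexing check goes through in either reading.
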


\section{Condensed Enrichment}\label{sec:enrich}

We refer to \cite[Section 4]{Bihlmaier2026} for terminology and definitions in this section, and to \cite{Scholze2026, Scholze2026a, Clausen2026, Bihlmaier2025} for the basics on condensed mathematics.
In particular, we implicitly fix a strong limit cardinal $\kappa$.

Much of the structure theory of nilspaces is developed not purely in cube\emph{sets}
but rather with an additional topological structure,
mostly compact metrizable Hausdorff spaces.
In \cite[Section 4]{Bihlmaier2026} we obtained this setting by tensoring the category of cubesets
with the category of ($\kappa$-)condensed sets in $\PrL$,
which ultimately led to the classical weak structure theorem.
This also allows us to formulate a topologically enriched version of the transition functor.

\begin{proposition}\label{prop:computation-K-pointwise}
The adjunction $K_!\dashv K^\ast$ induces an adjunction
\[K_!\otimes\id\colon\sSet\otimes\CondSet\rightleftarrows\cSet\otimes\CondSet\lon K^\ast\otimes\id.\]
The right adjoint $K^\ast\otimes\id$ of this adjunction makes the diagrams
\begin{center}
\begin{tikzcd}
	{\cSet\otimes\CondSet} & {\sSet\otimes\CondSet} & {\cSet\otimes\CondSet} & {\sSet\otimes\CondSet} \\
	{\Fun^\times(\extr_\kappa^\op,\cSet)} & {\Fun^\times(\extr_\kappa^\op,\sSet)} & {\Fun({\bbox}^\op,\CondSet)} & {\Fun(\bbDelta^\op,\CondSet)}
	\arrow["{K^\ast\otimes\id}", from=1-1, to=1-2]
	\arrow["\simeq"', from=1-1, to=2-1]
	\arrow["\simeq", from=1-2, to=2-2]
	\arrow["{K^\ast\circ-}"', from=2-1, to=2-2]
    \arrow["{K^\ast\otimes\id}", from=1-3, to=1-4]
	\arrow["\simeq"', from=1-3, to=2-3]
	\arrow["\simeq", from=1-4, to=2-4]
	\arrow["{X\mapsto\hom((K_{(-)})_\ud,X)}"'{yshift=-4pt}, from=2-3, to=2-4]
\end{tikzcd}
\end{center}
commutative.
Explicitly, this means for a condensed cubeset $X$ we have that
\[
    (K^\ast X)(E) = K^\ast(X(E)) = \hom_{\cSet}(K_{(-)},X(E))
    \quad\text{and}\quad
    K^\ast_n X = \hom_{\cCond}((K_n)_\ud,X)
\]
for $E\in\extr_\kappa$ and $[n]\in\bbDelta$.
\end{proposition}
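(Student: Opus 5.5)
The plan is to use that tensoring with $\CondSet$ in $\PrL$ is functorial for adjunctions, and then compute the right adjoint in the two standard models of $\cSet\otimes\CondSet$.

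\textbf{Existence of the adjunction.} The tensor product $-\otimes\CondSet$ is a functor on $\PrL$, so the colimit-preserving functor $K_!$ yields a colimit-preserving functor $K_!\otimes\id$. By the adjoint functor theorem it has a right adjoint, which we call $K^\ast\otimes\id$.

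\textbf{First square.} We use the identification $\cC\otimes\CondSet\simeq\Fun^\times(\extr_\kappa^\op,\cC)$ for presentable $\cC$, as in \cite[Section 4]{Bihlmaier2026}. Under it, $K_!\otimes\id$ corresponds, on a generating family of objects, to postcomposition with $K_!$ on the presheaf side. To get the right adjoint we must check two things.
\begin{itemize}
\item Postcomposition $K^\ast\circ-$ preserves product-preserving functors. This holds because $K^\ast$ is a right adjoint and so preserves finite products.
\item $K^\ast\circ-$ is right adjoint to $K_!\otimes\id$. The pointwise adjunction $K_!\dashv K^\ast$ induces an adjunction between $\Fun(\extr_\kappa^\op,\cSet)$ and $\Fun(\extr_\kappa^\op,\sSet)$. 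This adjunction restricts to the $\times$-subcategories: the right adjoint preserves them, and the left adjoint is postcomposition with $K_!$ followed by sheafification, which is the tensored functor.
\end{itemize}
Uniqueness of adjoints then gives the first square, i.e.\ $(K^\ast X)(E)=K^\ast(X(E))$.

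\textbf{Second square.} We use $\cSet\otimes\CondSet\simeq\Fun({\bbox}^\op,\CondSet)$ and similarly for $\sSet$, since $\PSh(\cC)\otimes\cD\simeq\Fun(\cC^\op,\cD)$. Now compute levelwise. For $X$ a condensed cubeset, the value $(K^\ast X)_n$ is the condensed set $E\mapsto\hom_{\cSet}(K_n,X(E))$, by the first square. Since $K_n$ is a discrete cubeset, we have $\hom_{\cSet}(K_n,X(E))=\hom_{\cCond}((K_n)_\ud,X)(E)$, using that the discrete functor $(-)_\ud$ is left adjoint to evaluation at the point. Enriching, this follows from the internal-hom formula $\underline{\hom}((K_n)_\ud,X)(E)=\hom(K_n,X(E))$. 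The latter is computed as an end over $\bbox$ of $\prod_{K_n(m)}X(m)(E)$, and evaluation at $E$ commutes with these limits. This gives the displayed formulas.

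The main obstacle is the compatibility check between the two models of the tensor product, specifically that the left adjoint $K_!\otimes\id$ really is postcomposition followed by sheafification under the $\Fun^\times$ identification. I would handle it by checking on representables $\bbDelta^n\otimes h_E$. Both sides send these to $K_n\otimes h_E$, and both functors preserve colimits, so they agree.
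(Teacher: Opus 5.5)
Your proposal is correct, but it is organized differently from the paper's proof. The paper proves the two squares independently. The left square is read off from a general fact about tensoring adjunctions in $\PrL$, cited from Kerodon. The right square is computed in a third model, $\Fun^\uR(\cSet^\op,\CondSet)$, where $K^\ast\otimes\id$ is just precomposition with $K_!^\op$. The paper then passes to $\Fun({\bbox}^\op,\CondSet)$ by restriction and right Kan extension, writes $K_n=\varinjlim{\bbox}^i$, and applies the enriched Yoneda lemma $X({\bbox}^i)=\hom({\bbox}^i_\ud,X)$ to get $\hom((K_n)_\ud,X)$.

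You instead prove the left square by hand. You use the reflective-subcategory argument, which shows that sheafified postcomposition with $K_!$ is left adjoint to $K^\ast\circ-$ (because $K^\ast$ preserves products), and you identify that functor with $K_!\otimes\id$ on generators. You then obtain the right square from the left one by evaluating at $E$ and commuting evaluation with the end $\int_m X(m)^{K_n(m)}$. That end is the same limit over the category of elements of $K_n$ that the paper uses.

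Your route is more self-contained and explains why the right adjoint is computed pointwise. The paper's route is shorter and never has to compare models. Yours silently uses that the identifications $\cSet\otimes\CondSet\simeq\Fun^\times(\extr_\kappa^\op,\cSet)$ and $\cSet\otimes\CondSet\simeq\Fun({\bbox}^\op,\CondSet)$ differ by currying. This is standard, but you should state it.

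Two smaller points need tightening:
\begin{itemize}
\item The check on generators must be a natural one. The cleanest phrasing is a natural equivalence of the bilinear functors $(S,F)\mapsto K_!S\otimes F$ and $(S,F)\mapsto a(K_!\circ(S\otimes F))$, where $a$ is the reflector onto $\Fun^\times$. This holds because $K_!$ preserves coproducts and $a\circ(K_!\circ-)$ inverts the sheafification unit.
\item The adjunction $(-)_\ud\dashv\mathrm{ev}_\ast$ alone only gives the case $E=\ast$. The general case comes from your end computation.
\end{itemize}
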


Here, given a cubeset $X$, $X_\ud$ denotes the corresponding discrete condensed cubeset,
see~\cite[Definition 4.2.11]{Bihlmaier2026}.

\begin{proof}
The induced adjunction is a direct consequence of functoriality of the tensor product,
see \cite[Section 4.8.1]{Lurie2017}.
The diagram on the left commutes by \cite[Remark 071F]{Kerodon2026}.
This directly implies the first equality $(K^\ast X)(E) = K^\ast(X(E))$
under the equivalence $\CondSet\otimes \sSet\simeq\Fun^\times(\extr_\kappa^\op,\sSet)$
since $K^\ast$ is computed pointwise.

Under the equivalence $\cSet\otimes\CondSet\simeq\Fun^\uR(\cSet^\op,\CondSet)$,
the right adjoint $K^\ast\otimes\id$ is given by precomposition with
$K_!^\op\colon\sSet^\op\to\cSet^\op$, i.e. $X\mapsto X\circ K_!^\op$,
see \cite[Remark 071F]{Kerodon2026}.
Furthermore, the equivalence $\Fun^\uR(\cSet^\op,\CondSet)\simeq\Fun({\bbox}^\op,\CondSet)$
is given by restriction and right Kan extension, respectively,
and thus, under this equivalence, the map
$\Fun({\bbox}^\op,\CondSet)\to\Fun(\bbDelta^\op,\CondSet)$
is given by right Kan extending $X$ to a limit preserving functor $\cSet^\op\to\CondSet$,
then precomposing with $K_!^\op$ and lastly restricting to $\bbDelta^\op$.
Given $[n]\in\bbDelta$, we write $K_n = \varinjlim_{i\in\cI}{\bbox}^i$ as the colimit of representables.
Then we have for a functor $X\colon{\bbox}^\op\to\CondSet$ that
\[
    (\Ran X\circ K_!^\op)([n]) = (\Ran X)(K_!^\op([n])) = (\Ran X)(K_n) = \varprojlim_{i\in\cI}X\left({\bbox}^i\right).
\]
Now by the enriched Yoneda lemma for condensed cubesets~\cite[Lemma 4.2.12]{Bihlmaier2026}
we have that $X({\bbox}^i) = \hom({\bbox}^i_\ud,X)$ where $(-)_\ud$
denotes the (topologically) discrete condensed cubeset, see \cite[Lemma 4.2.10]{Bihlmaier2026}.
Since both the condensed enriched $\hom$-functor and $(-)_\ud$ preserve colimits in the first variable,
we obtain that
\[
    \varprojlim_{i\in\cI}X\left({\bbox}^i\right) = \varprojlim_{i\in\cI}\hom\left({\bbox}^i_\ud,X\right)
    = \hom\left(\varinjlim_{i\in\cI}\left({\bbox}^i_\ud\right),X\right)
    = \hom\left(\left(\varinjlim_{i\in\cI}{\bbox}^i\right)_\ud,X\right)
    = \hom\left((K_n)_\ud,X\right).
\]
\end{proof}

\begin{corollary}\label{thm:main_enriched}
Let $X$ be a concrete fibrant condensed cubeset. Then the following assertions hold.
\begin{enumerate}[(i)]
    \item The simplicial condensed set $K^\ast X$ is a Kan complex (i.e., every simplicial set $K^*X(E)$ for $E\in\extr_\kappa$ is a Kan complex).
    \item The condensed set of ergodic components of $X$
            agrees with the condensed set of connected components of $K^\ast X$.
    \item For every basepoint $x_0\in X$, the condensed structure groups of $X$
            are isomorphic to the condensed homotopy groups of $K^\ast X$.
\end{enumerate}
Moreover, if $X$ is qcqs, then so is $K^\ast X$.
\end{corollary}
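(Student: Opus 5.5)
The plan is to deduce Corollary~\ref{thm:main_enriched} pointwise from Theorem~\ref{thm:main} via Proposition~\ref{prop:computation-K-pointwise}. That proposition gives $(K^\ast X)(E) = K^\ast(X(E))$ for every $E\in\extr_\kappa$. So everything reduces to checking that the relevant notions for condensed objects are themselves computed pointwise on extremally disconnected sets, and that evaluation at $E$ preserves the hypotheses of Theorem~\ref{thm:main}.

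\textbf{Reduction to cubesets.} First I would check that for a concrete fibrant condensed cubeset $X$, each $X(E)$ is a concrete fibrant cubeset.

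Concreteness means that $X(n)\to X(0)^{2^n}$ is a monomorphism of condensed sets. Since limits and monomorphisms in $\CondSet$ are detected on $\extr_\kappa$, this map stays injective after evaluating at $E$.

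For fibrancy I would use the definition from \cite[Section 4]{Bihlmaier2026}: $\hom(\bbox^n,X)\to\hom(\bbcor^n,X)$ is an effective epimorphism of condensed sets. Surjections onto extremally disconnected sets split, so this map is surjective on $E$-points. Hence $X(E)$ has corner completion.

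This step is where I expect the main subtlety. If the paper's condensed fibrancy is only \enquote{epimorphism of condensed sets}, I would still argue that epimorphisms in $\CondSet$ are pointwise surjective on $\extr_\kappa$, because sheaves on extremally disconnected sets satisfy descent trivially.

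\textbf{Parts (i)--(iii).} Part (i) then follows directly from Theorem~\ref{thm:main}(i), applied to each $X(E)$.

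For (ii), the condensed set of ergodic components is the coequalizer of $X(1)\rightrightarrows X(0)$ in $\CondSet$. Coequalizers, and more generally colimits, of condensed sets are computed on $\extr_\kappa$ by evaluation, since finite products are preserved. Likewise, $\pi_0$ of a simplicial condensed set is the coequalizer of $K_1^\ast X\rightrightarrows K_0^\ast X$, evaluated pointwise. The natural isomorphism of Theorem~\ref{thm:main}(ii), taken at each $E$, is natural in $E$ because it is natural in $X$ and the maps $X(E)\to X(E')$ are maps of concrete fibrant cubesets. These pointwise isomorphisms therefore assemble into an isomorphism of condensed sets.

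For (iii), I would argue the same way. A basepoint $x_0\colon\ast\to X(0)$ gives basepoints in every $X(E)$. The condensed structure groups and condensed homotopy groups are, by definition, the presheaves $E\mapsto A_n(X(E),x_0)$ and $E\mapsto\pi_n(K^\ast X(E),x_0)$; I would check this against \cite[Section 4]{Bihlmaier2026}, since they are quotients of pullbacks computed pointwise. Naturality of the isomorphism from Theorem~\ref{thm:main}(iii), together with the fact that it is a group isomorphism, gives an isomorphism of condensed groups.

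\textbf{Qcqs claim.} Each $K_n^\ast X = \hom((K_n)_\ud,X)$ is a finite limit of the $X(m)$, because $K_n$ is a finitely presented cubeset: it is the subfunctor generated by finitely many degenerable cubes. Finite limits of qcqs condensed sets are qcqs. So each level of $K^\ast X$ is qcqs whenever the levels of $X$ are.
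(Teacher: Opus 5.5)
Your proposal is correct. For (i)--(iii) it follows the paper's argument: evaluate at each $E\in\extr_\kappa$ via Proposition~\ref{prop:computation-K-pointwise} and apply Theorem~\ref{thm:main} to $X(E)$. You also spell out checks the paper leaves implicit, namely that each $X(E)$ is concrete and fibrant and that the relevant quotients are formed pointwise.

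One statement there needs correcting. Colimits in $\CondSet$ are \emph{not} in general computed by evaluation on $\extr_\kappa$: the initial object and binary coproducts already fail the finite-product condition. Only sifted colimits are computed this way. Your coequalizers $X(1)\rightrightarrows X(0)$ and $K^\ast_1X\rightrightarrows K^\ast_0X$ are reflexive, via the constant cubes and $s_0$ respectively. The structure and homotopy groups are quotients by equivalence relations. So your conclusion stands.

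For the qcqs claim you take a genuinely different route. The paper writes $K^\ast_nX=\varprojlim_{i\in\mathcal{I}}X({\bbox}^i)$ over an arbitrary presentation of $K_n$ by representables and cites closure of qcqs condensed sets under all limits, so it needs no finiteness property of $K_n$. You use only closure under finite limits, which is more elementary. The cost is that you need a finiteness property of $K_n$, and your justification for it is incomplete.

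For fixed $n$, the cubeset $K_n$ is generated by all $\sigma_\ast A$ with $\sigma\colon[k]\to[n]$ and $k$ arbitrary. It is not visibly generated by finitely many degenerable cubes, and in any case finite generation is not finite presentation.

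What does work is the following. By degenerability, the label $\bigvee_{j\in\sigma^{-1}(i)}f_j$ is independent of the interior coordinates of the fibre $\sigma^{-1}(i)$. Hence $\sigma_\ast A$ depends only on the coordinates $x_{\max\sigma^{-1}(i)}$, at most $n$ of which are genuine. So it is pulled back from the finite set $\bigcup_{d\le n}K_n(d)$.

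Since each $X(E)$ is concrete, a map $f\colon\su[n]\to X(0)(E)$ defines a map $K_n\to X(E)$ if and only if it sends these finitely many generators into $X(E)$. Finite generation together with concreteness therefore already exhibits $K^\ast_nX$ as a finite limit of the $X(m)$. Alternatively, Proposition~\ref{prop:concrete-simplices}(c), applied at each $E$, gives this finite-limit description directly through the cubes $i_\ast C_m$ and $i_\ast D_m$.
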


\begin{proof}
The claims (i)-(iii) directly follow from combining Theorem~\ref{thm:main}
with Proposition~\ref{prop:computation-K-pointwise}.

The last claim follows from the fact that qcqs condensed sets are stable under limits
\cite[Proposition 2.5.49]{Bihlmaier2025}:
we have that $K^\ast_n X = \varprojlim_{i\in\cI} X({\bbox}^i)$
by the proof of Proposition~\ref{prop:computation-K-pointwise}
and each $X({\bbox}^i)$ is qcqs.
\end{proof}

Note that in (iii), by the condensed homotopy groups we mean the condensed groups obtained by pointwise taking homotopy groups.
A priori, this might not be the same as the internal homotopy groups of the corresponding condensed anima.
We close this section by showing that these constructions agree, leading to the homotopy coherent variant of the previous results.
We are convinced that this is well-known to experts, but we could not find an explicit reference in the literature.
First, let us recall the necessary constructions and definitions.

In \cite[Section 4.1]{Bihlmaier2026} we have referred to the $\infty$-category $\Ani$ of anima
just as the tensor unit of $\PrL$,
defined via its universal property that $\Fun^\uR(\Ani^\op,\cC) \simeq \cC$ for every presentable $\infty$-category $\cC$.
Let us recall a model-dependent description of $\Ani$ that is based on Kan complexes.

\begin{construction}
Let $\mathrm{Kan}$ be the full subcategory of $\sSet$ consisting of all Kan complexes.
Since $\sSet$ is enriched over itself
(its internal Hom has $\hom(X\times\bbDelta^n,Y)$ as its set of $n$-simplices)
and this internal Hom is a Kan complex whenever the codomain is,
the category $\mathrm{Kan}$ is enriched over itself.
Thus, taking the homotopy coherent nerve $\uN^\mathrm{hc}(\mathrm{Kan})$ (\cite[Definition 1.1.5.5]{Lurie2009}, called simplicial nerve there)
of the category of Kan complexes gives an $\infty$-category by \cite[Proposition 1.1.5.10]{Lurie2009},
which turns out to be the $\infty$-category $\Ani$, see \cite[Corollary 5.1.5.8]{Lurie2009}.
Note that there is a functor $\mathrm{Kan}\to\Ani$ of $\infty$-categories
via the usual embedding of 1-categories into $\infty$-categories,
see \cite[Remark 1.1.5.8]{Lurie2009} and \cite[Remark 00KV]{Kerodon2026}.
This functor preserves arbitrary products by \cite[Example 7.6.1.19]{Kerodon2026}.
\end{construction}

Going back to the cubical setting, the functor $K^\ast\colon\cSet\to\sSet$
associates to each concrete fibrant cubeset $X$ a Kan complex $K^\ast X$
and thus restricts to a functor $K^\ast\colon\ccSetnil\to\mathrm{Kan}$
preserving finite products (since both $\ccSetnil$ and $\mathrm{Kan}$ are closed under finite products
in $\cSet$ and $\sSet$, respectively).
By composition, we obtain a finite product preserving functor $K^\ast\colon\ccSetnil\to\Ani$,
giving each concrete fibrant cubeset its corresponding homotopy type.
Pushforward of this functor thus yields a functor
\[
    \Fun^\times(\extr_\kappa^\op,\ccSetnil)\to\Fun^\times(\extr_\kappa^\op,\Ani)
\]
where the left hand side can be identified with the full subcategory of condensed cubesets
which are concrete and fibrant (see \cite[Sections 4.3 and 4.4]{Bihlmaier2026})
and the right hand side can be identified with the category of condensed anima
by \cite[Proposition 4.2.2]{Bihlmaier2026}.

We now want to compare the (condensed) structure groups of a condensed cubeset
with the (condensed) homotopy groups of the corresponding condensed anima.

\begin{construction}
Given an $\infty$-topos $\cX$, there is an internal notion of homotopy groups in $\cX$,
see \cite[Section 6.5.1]{Lurie2009}.
Note that $\cX$ has all limits, and thus is cotensored over $\Ani$ by \cite[Remark 5.5.2.6]{Lurie2009}.
This means that for every simplicial set $K$ and $X\in\cX$ there is an object $X^K$
that corepresents the functor
\[
    Y\mapsto \hom_{\mathrm{ho}\Ani}(K,\hom(Y,X)).
\]
Here, $\mathrm{ho}\Ani$ is the homotopy category of anima,
which is equivalent to the (classical) homotopy category of CW-complexes.
This means that there is a natural isomorphism in $\mathrm{ho}\Ani$
\[
    \hom(Y,X^K)\simeq\hom_{\mathrm{ho}\Ani}(K,\hom(Y,X)).
\]
Essentially by definition, the object $X^K$ is obtained by taking the limit of the constant diagram
$K\to\cX$ with value $X$, see \cite[Corollary 4.4.4.9]{Lurie2009}.
Given a point of the $n$-sphere $\uS^n = \partial\bbDelta^{n+1}$, the map $\ast\to\uS^n$ of simplicial sets
induces by pullback a map $s\colon X^{\uS^n}\to X$.
Then the $n$-th homotopy group of $X$ is given by $\pi_n(X) = \tau_0(s)\in\mathrm{Disc}(\cX_{/X})$.
Here, $\tau_0\colon\cX_{/X}\to\mathrm{Disc}(\cX_{/X})$ denotes the $0$-truncation.

The projection $\uS^n\to\ast$ gives a basepoint of $\pi_n(X)$
and the comultiplication $\uS^n\to\uS^n\,\vee\,\uS^n$ induces a multiplication on $\pi_n(X)$ for $n\ge 1$.
It then can be shown that $\pi_n(X)$ is a group object in $\mathrm{Disc}(\cX_{/X})$
which is abelian for $n\geq 2$, see \cite[Section 6.5.1]{Lurie2009}.
\end{construction}

This construction seems very abstract at first,
but in many cases, and especially those relevant for us,
we can compute the homotopy groups of $\cX$.
The following example and lemmas explain how.

\begin{example}
This notion of homotopy group recovers the classical definition of homotopy group of a Kan complex.
Explicitly, if $K$ is a Kan complex with point $x_0\colon\ast\to K$,
interpreted as an anima,
then $x_0^\ast\pi_n(K)$ is a group object in $\set$ (so just a classical group)
that is given by the classical (simplicial) homotopy group $\pi_n(K,x_0)$, see \cite[Remark 6.5.1.5]{Lurie2009}.
\end{example}

\begin{lemma}
If $\cX = \PSh(\cC,\Ani) = \PSh(\cC)$ is the category of presheaves on a small category $\cC$,
then the homotopy groups $\pi_n(X)\in\mathrm{Disc}(\PSh(\cC)_{/X})$ of a presheaf $X$ are given
by the pointwise homotopy groups $\pi_n(X(c))\in\mathrm{Disc}(\Ani_{/X(c)})$.
\end{lemma}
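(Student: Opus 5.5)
The plan is to reduce everything to the objectwise definition of homotopy groups in $\Ani$. The main tool is that limits in $\PSh(\cC)$ are computed pointwise.

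First I identify the three ingredients of the internal construction. The cotensor $X^K$ is the limit of the constant $K$-shaped diagram with value $X$. Since limits in $\PSh(\cC)$ are computed pointwise, this gives a natural equivalence $(X^K)(c)\simeq X(c)^K$, where the right-hand side is the cotensor in $\Ani$. Consequently the map $s\colon X^{\uS^n}\to X$ induced by the basepoint $\ast\to\uS^n$ evaluates at $c$ to the corresponding map $s_c\colon X(c)^{\uS^n}\to X(c)$ in $\Ani$. The same pointwise argument applies to the comultiplication $\uS^n\to\uS^n\vee\uS^n$, since $X^{\uS^n\vee\uS^n}\simeq X^{\uS^n}\times_X X^{\uS^n}$ is again a pointwise limit. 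So the group structure maps are also computed objectwise.

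Next I need that $0$-truncation in $\PSh(\cC)_{/X}$ is computed pointwise. I would use $\PSh(\cC)_{/X}\simeq\PSh(\int X)$, the presheaves on the unstraightened category of elements, or equivalently argue directly. An object of $\PSh(\cC)_{/X}$ is $0$-truncated iff each component $Y(c)\to X(c)$ is $0$-truncated in $\Ani$, because mapping spaces out of representables $h_c\to X$ compute the fibres of these components. The pointwise $0$-truncation $c\mapsto\tau_0^{X(c)}(Y(c)\to X(c))$ is functorial in $c$ because $\tau_0$ commutes with pullback along $X(c')\to X(c)$ in $\Ani$; this is the statement that truncation in an $\infty$-topos is stable under base change. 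This pointwise object is $0$-truncated, and the unit map $Y\to\tau_0Y$ has the universal property objectwise. By the Yoneda argument above, it therefore has the universal property globally. In the same way, $\mathrm{Disc}(\PSh(\cC)_{/X})$ is identified with the presheaves $c\mapsto\mathrm{Disc}(\Ani_{/X(c)})$ compatible with pullback.

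Combining the two steps gives $\pi_n(X)(c)=\tau_0(s)(c)\simeq\tau_0(s_c)=\pi_n(X(c))$, compatibly with the unit and the multiplication, so the group objects agree. I expect the main obstacle to be the step on truncation commuting with evaluation, specifically its functoriality in $c$. I would handle it cleanly with the citation from \cite[Section 5.5.6]{Lurie2009} that left exact, colimit preserving functors such as evaluation $\mathrm{ev}_c$ commute with truncation (Proposition 5.5.6.28), applied to the induced functor on slices $\PSh(\cC)_{/X}\to\Ani_{/X(c)}$. Joint conservativity of the $\mathrm{ev}_c$ then finishes the identification.
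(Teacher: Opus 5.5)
Your proposal is correct and follows essentially the same route as the paper. Cotensors in $\PSh(\cC)$ are limits and hence computed pointwise, so $s$ evaluates to $s_c$; and $0$-truncation commutes with the left exact, colimit-preserving evaluation functor on slices $\PSh(\cC)_{/X}\to\Ani_{/X(c)}$ by \cite[Proposition 5.5.6.28]{Lurie2009}, which gives $\pi_n(X)(c)=\tau_0(s)(c)\simeq\tau_0(s_c)=\pi_n(X(c))$. Your explicit treatment of the comultiplication (compatibility of the group structure) and your sketched category-of-elements description are harmless additions that the paper leaves implicit.
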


\begin{proof}
Note that evaluation $\PSh(\cC)\to\Ani$ at an object $c\in\cC$ commutes with limits and colimits
since both are computed pointwise by \cite[Corollary 5.1.2.3]{Lurie2009}.
Thus, given a presheaf $X\in\PSh(\cC)$, the diagram
\begin{center}
\begin{tikzcd}
	{\PSh(\cC)_{/X}} & {\Ani_{/X(c)}} \\
	{\mathrm{Disc}(\PSh(\cC)_{/X})} & {\mathrm{Disc}(\Ani_{/X(c)})}
	\arrow["{\ev_c}", from=1-1, to=1-2]
	\arrow["{\tau_0}"', from=1-1, to=2-1]
	\arrow["{\tau_0}", from=1-2, to=2-2]
	\arrow["{\ev_c}"', from=2-1, to=2-2]
\end{tikzcd}
\end{center}
commutes (up to equivalence) by \cite[Proposition 5.5.6.28]{Lurie2009}.
Since limits (and thus, cotensors) of presheaves are computed pointwise,
the map $s\colon X^{\uS^n}\to X$ is given pointwise by $s_c\colon X(c)^{\uS^n}\to X(c)$
(and we even have that $X(c)^{\uS^n} = \hom(\uS^n,X(c))$).
Thus, we conclude
\[
    \pi_n(X)(c) = \tau_0(s)(c) = \tau_0(s_c) = \pi_n(X(c)).\qedhere
\]
\end{proof}

\begin{lemma}
Let $X$ be a condensed anima.
Under the equivalence $\CondAni\simeq\Fun^\times(\extr_\kappa^\op,\Ani)$,
the (internal) homotopy groups $\pi_n(X)$ are given by the pointwise homotopy groups
$\pi_n(X(E))\in\mathrm{Disc}(\Ani_{/X(E)})$.
\end{lemma}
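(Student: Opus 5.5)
The plan is to reduce the condensed statement to the presheaf lemma just proved. I would embed $\CondAni\simeq\Fun^\times(\extr_\kappa^\op,\Ani)$ as a full subcategory of $\PSh(\extr_\kappa)=\Fun(\extr_\kappa^\op,\Ani)$ and compare the internal homotopy groups in the two $\infty$-topoi.

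The first step is to check that the inclusion $\iota\colon\Fun^\times(\extr_\kappa^\op,\Ani)\hookrightarrow\Fun(\extr_\kappa^\op,\Ani)$ preserves all small limits. Limits in the presheaf category are computed pointwise, and a pointwise limit of finite-product-preserving functors is again finite-product-preserving, since limits commute with limits. Consequently the cotensor $X^{\uS^n}$, formed as the limit of the constant diagram indexed by $\uS^n$, is the same object in both categories. So is the map $s\colon X^{\uS^n}\to X$ induced by the basepoint, and so is the comultiplication-induced structure. On each $E\in\extr_\kappa$ this object is $X(E)^{\uS^n}$.

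The second step is the compatibility of $0$-truncation, which is where I expect the main obstacle. I need $\tau_0(s)$ computed in $\CondAni_{/X}$ to agree with $\tau_0(s)$ computed in $\PSh(\extr_\kappa)_{/X}$; by the presheaf lemma the latter is the pointwise $\tau_0(s_E)$. My approach is to show that the pointwise truncation of a map between finite-product-preserving presheaves is again finite-product-preserving. The relative truncation $\tau_0$ over $X(E)$ in $\Ani$ sends a fibre product $Y_1\times Y_2\to X_1\times X_2$ to the product of the truncations, because $n$-truncated maps and $n$-connective maps are both stable under products in $\Ani$. Hence for $E=E_1\sqcup E_2$ the pointwise truncation $\tau_0(s_E)$ decomposes as the product $\tau_0(s_{E_1})\times\tau_0(s_{E_2})$, and $\tau_0$ of the empty case is the terminal object. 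The pointwise object therefore already lies in $\CondAni_{/X}$. Since it is $0$-truncated there, it has the universal property of $\tau_0$ in the full subcategory, as the inclusion is fully faithful and $0$-truncatedness is detected pointwise.

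With this, $\pi_n(X)(E)=\tau_0(s)(E)=\tau_0(s_E)=\pi_n(X(E))$. The group structures agree because they are induced by the same comultiplication maps through the same limit-preserving inclusion.

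If the truncation argument runs into trouble, I would fall back on the sheafification perspective. $\CondAni$ is a left exact localization of $\PSh(\extr_\kappa)$, and $\tau_0$ commutes with left exact localizations by \cite[Proposition 5.5.6.28]{Lurie2009}. The remaining point would then be that sheafification acts as the identity on the pointwise truncation, which amounts to the same product-preservation check as above.
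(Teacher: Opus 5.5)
Your proposal is correct and takes essentially the paper's route: reduce to the presheaf lemma and observe that the pointwise construction preserves finite products. The cotensor preserves them because it is a limit, and the relative $0$-truncation preserves them (the paper cites \cite[Lemma 6.5.1.2]{Lurie2009}), so the pointwise homotopy group already lies in $\Fun^\times(\extr_\kappa^\op,\Ani)$. You are more explicit than the paper in two places, namely the product-stability of truncated and connective maps and the universal-property check that the pointwise truncation is the internal one in $\CondAni_{/X}$; the paper leaves both implicit.
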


\begin{proof}
This follows from the previous lemma
noting that truncation commutes with finite products by \cite[Lemma 6.5.1.2]{Lurie2009} and cotensoring as well, being a limit.
Thus, the (pointwise) homotopy group preserves finite products and restricts to $\Fun^\times(\extr_\kappa^\op,\Ani)$.
\end{proof}

This concludes our comparison of the structure group with the homotopy group.

\begin{corollary}
Let $X$ be a concrete fibrant condensed cubeset with point $x_0\colon\ast\to X$
and $Y = K^\ast X$ the corresponding condensed anima.
Then the $n$-th condensed structure group $\pi_n(X,x_0)$ of $X$
is naturally isomorphic to the condensed homotopy group $x_0^\ast\pi_n(Y)$.
\end{corollary}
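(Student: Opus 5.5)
Combining the previous two lemmas with Corollary~\ref{thm:main_enriched} should be enough; the work is in lining up the identifications. The plan proceeds pointwise over $\extr_\kappa$.

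\textbf{Step 1: pointwise description of the structure group.} Under the equivalence with $\Fun^\times(\extr_\kappa^\op,\ccSetnil)$, the condensed structure group $\pi_n(X,x_0)$ is given on each $E\in\extr_\kappa$ by the structure group $\pi_n(X(E),x_0|_E)$ of the concrete fibrant cubeset $X(E)$. Here $x_0|_E$ denotes the point induced by $E\to\ast$. This is how condensed structure groups are defined in \cite[Section 4]{Bihlmaier2026}.

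\textbf{Step 2: pointwise description of the homotopy group.} By Proposition~\ref{prop:computation-K-pointwise}, $Y(E)=K^\ast(X(E))$, which is a Kan complex by Theorem~\ref{thm:main}. Its image in $\Ani$ is the condensed anima $Y$, since $\mathrm{Kan}\to\Ani$ preserves finite products. By the last lemma, $\pi_n(Y)$ is computed pointwise as $\pi_n(Y(E))\in\mathrm{Disc}(\Ani_{/Y(E)})$. Pulling back along $x_0$ commutes with evaluation at $E$, because limits are pointwise. Hence $(x_0^\ast\pi_n(Y))(E)=x_0|_E^\ast\pi_n(Y(E))$. By the example, the right-hand side is the classical simplicial group $\pi_n(K^\ast X(E),x_0|_E)$.

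\textbf{Step 3: compare and glue.} Theorem~\ref{thm:main}(iii) gives an isomorphism of groups
\[
\pi_n(X(E),x_0|_E)\cong\pi_n(K^\ast X(E),x_0|_E)
\]
for each $E$. This isomorphism is natural in the cubeset, so it is compatible with the restriction maps along $E'\to E$. It therefore assembles into an isomorphism of functors $\extr_\kappa^\op\to\mathrm{Grp}$, and both sides preserve finite products. Naturality in $X$ follows the same way from naturality in Theorem~\ref{thm:main}.

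\textbf{Main obstacle.} The delicate point is Step 2, namely making sure that the three descriptions agree: the internal $\pi_n$ of the $\infty$-topos, pointwise evaluation, and the classical simplicial group structure. In particular, the group structure coming from the comultiplication on $\uS^n$ must match the horn-filling multiplication used in Theorem~\ref{thm:main}. I would handle this by citing \cite[Remark 6.5.1.5]{Lurie2009} for Kan complexes, applying it pointwise, and noting that evaluation at $E$ commutes with $\tau_0$ and with cotensoring, as in the presheaf lemma.
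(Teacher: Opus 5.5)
Your proposal is correct and takes essentially the same route as the paper, which gives no separate proof: the corollary is read off from the two preceding lemmas (internal homotopy groups of condensed anima are computed pointwise on $\extr_\kappa$), the example identifying these with classical simplicial homotopy groups of Kan complexes via \cite[Remark 6.5.1.5]{Lurie2009}, and the pointwise comparison in Corollary~\ref{thm:main_enriched}(iii), which comes from Theorem~\ref{thm:main}. Your Steps 1--3 just spell out these identifications, including the compatibility with restriction along $E'\to E$, which follows from naturality in Theorem~\ref{thm:main}(iii).
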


\printbibliography[heading=bibintoc]
\end{document}